\DeclareFontFamily{OT1}{pzc}{}
\DeclareFontShape{OT1}{pzc}{m}{it}{<-> s * [1.10] pzcmi7t}{}
\DeclareMathAlphabet{\mathpzc}{OT1}{pzc}{m}{it}
\let\originalleft\left
\let\originalright\right
\renewcommand{\left}{\mathopen{}\mathclose\bgroup\originalleft}
\renewcommand{\right}{\aftergroup\egroup\originalright}
\newcommand*\circled[1]{\tikz[baseline=(char.base)]{\node[shape=circle,draw,inner sep=1.2pt](char){#1};}}
\begin{document}

\def\bM{{\bf M}}
\def\bO{{\bf 0}}
\def\bW{{\bf W}}
\def\cF{\mathcal{F}}
\def\co{\mathpzc{o}}
\def\cR{\mathcal{R}_{\rm BYG}}
\def\cS{\mathcal{S}}
\def\cT{\mathcal{T}}
\def\cW{\mathcal{W}}
\def\cX{\mathcal{X}}
\def\ee{\varepsilon}
\def\pM{\Phi}    %\def\pM{\mathpzc{M}}
\def\rD{{\rm D}}
\def\re{{\rm e}}
\def\ri{{\rm i}}

\def\codeIndent{\hspace*{11mm}}

% colors
\definecolor{codeGreen}{rgb}{0,.5,0}
\definecolor{codeRed}{rgb}{.8,0,.1}
\definecolor{myPlotGrey}{rgb}{.85,.85,.85}
\definecolor{myLightRed}{rgb}{1,.65,.65}

% steps
\newcommand{\myStep}[2]{{\bf Step #1} --- #2}

% \lineFull for a smaller version of \overleftrightarrow
\makeatletter
\newcommand{\overleftrightsmallarrow}{\mathpalette{\overarrowsmall@\leftrightarrowfill@}}
\newcommand{\overarrowsmall@}[3]{%
  \vbox{%
    \ialign{%
      ##\crcr
      #1{\smaller@style{#2}}\crcr
      \noalign{\nointerlineskip}%
      $\m@th\hfil#2#3\hfil$\crcr
    }%
  }%
}
\def\smaller@style#1{%
  \ifx#1\displaystyle\scriptstyle\else
    \ifx#1\textstyle\scriptstyle\else
      \scriptscriptstyle
    \fi
  \fi
}
\makeatother
\newcommand{\lineFull}[1]{\overleftrightsmallarrow{#1}}

% for line segments (the first two lines of code prevent subscripts being lowered)
\newcommand{\doroverline}[2]{\overline{#1#2}}
\newcommand{\roverline}[1]{\mathpalette\doroverline{#1}}
\newcommand{\lineSeg}[2]{\roverline{#1 #2}}

\newtheorem{theorem}{Theorem}[section]
\newtheorem{corollary}[theorem]{Corollary}
\newtheorem{lemma}[theorem]{Lemma}
\newtheorem{proposition}[theorem]{Proposition}
\newtheorem{algorithm}[theorem]{Algorithm}

\theoremstyle{definition}
\newtheorem{definition}{Definition}[section]
\newtheorem{example}[definition]{Example}

\theoremstyle{remark}
\newtheorem{remark}{Remark}[section]

% To do:
% 	- submit to Inventiones Mathematicae, Adv Math, Trans. Amer. Math. Soc., Journal of the London Mathematical Society
%		- suggested referees: Michel M, Laura G, Sebastian van S, James Y

% Style comments:
%		- spelling: GB English
%		- abbreviations: BCNF

% keywords: Lyapunov exponent; trapping region; symbol sequence; border-collision; one-sided directional derivative

% MSC codes:
%		39A28 -- Bifurcation theory
%		37G35 -- Attractors and their bifurcations

\title{
Detecting invariant expanding cones for generating word sets to identify chaos in piecewise-linear maps.
}
\author{
D.J.W.~Simpson\\\\
School of Fundamental Sciences\\
Massey University\\
Palmerston North\\
New Zealand
}
\maketitle

\begin{abstract}
We show how the existence of three objects, $\Omega_{\rm trap}$, $\bW$, and $C$, for a continuous piecewise-linear map $f$ on $\mathbb{R}^N$, implies that $f$ has a topological attractor with a positive Lyapunov exponent. First, $\Omega_{\rm trap} \subset \mathbb{R}^N$ is trapping region for $f$. Second, $\bW$ is a finite set of words that encodes the forward orbits of all points in $\Omega_{\rm trap}$. Finally, $C \subset T \mathbb{R}^N$ is an invariant expanding cone for derivatives of compositions of $f$ formed by the words in $\bW$. We develop an algorithm that identifies these objects for two-dimensional homeomorphisms comprised of two affine pieces. The main effort is in the explicit construction of $\Omega_{\rm trap}$ and $C$. Their existence is equated to a set of computable conditions in a general way. This results in a computer-assisted proof of chaos throughout a relatively large regime of parameter space. We also observe how the failure of $C$ to be expanding can coincide with a bifurcation of $f$. Lyapunov exponents are evaluated using one-sided directional derivatives so that forward orbits that intersect a switching manifold (where $f$ is not differentiable) can be included in the analysis.
\end{abstract}

%===============================================================================
\section{Introduction}
\label{sec:intro}
\setcounter{equation}{0}

Piecewise-linear maps form canonical representations of nonlinear dynamics
and provide effective models of diverse physical systems \cite{DiBu08}.
Much work has been done to identify properties that imply a piecewise-linear map has a chaotic attractor.
These include Markov partitions \cite{GlWo11}, homoclinic connections \cite{Mi80},
and situations where the dynamics is effectively one-dimensional \cite{Ko05}.
In the context of ergodic theory, an attractor with an absolutely continuous invariant measure
exists for piecewise-expanding maps \cite{Bu99,Ts01} % also [Gl16e]
and piecewise-smooth maps with certain expansion properties \cite{Ry04,Yo85}. % also [Gl17]
However, when applied to the two-dimensional border-collision normal form (2d BCNF)
such properties have only been verified over regions of parameter space
that are small in comparison to where numerical simulations suggest chaotic attractors actually exist \cite{Gl16e,Gl17}.
To address this issue we use invariant expanding cones to bound Lyapunov exponents.
We show that this appears to be a highly effective method
for identifying chaotic attractors in a formal way.

For a continuous map $f$ on $\mathbb{R}^N$,
the Lyapunov exponent $\lambda(x,v)$ characterises the asymptotic %(large $n$)
rate of separation of the forward orbits of arbitrarily close points $x$ and $x + \delta v$: % \cite{BaPe13,Vi14}
\begin{equation}
\left\| f^n(x + \delta v) - f^n(x) \right\| \sim \re^{\lambda n} \| \delta v \|.
\label{eq:lambdaAsyDefn}
\end{equation}
A positive Lyapunov exponent for bounded orbits is a standard indicator of chaos \cite{Me07}.
Now suppose there exist $N \times N$ matrices $A_i$ such that
\begin{equation}
f^n(x + \delta v) - f^n(x) = \delta A_{n-1} \cdots A_1 A_0 v + \co(\delta),
\label{eq:forwardOrbitsDifference}
\end{equation}
where $\co(\delta)$ vanishes faster than $\delta$ as $\delta \to 0$.
These matrices exist if $f$ is $C^1$: each $A_i$ is the Jacobian matrix $\rD f$ evaluated at $f^i(x)$.
If $\bM$ denotes the set of all $A_i$
and there exists $c > 1$ and a cone $C \subset T \mathbb{R}^N$ with the property that
\begin{equation}
\text{$M v \in C$ and $\| M v \| \ge c \| v \|$, for all $v \in C$ and all $M \in \bM$},
\label{eq:expansionConditionIntro}
\end{equation}
then immediately we have $\lambda(x,v) > 0$ for any $v \in C$.
This idea is attributed to Alekseev \cite{Al68} (see \cite{BaDr15,Wo85})
and is useful for establishing splitting and hyperbolicity in smooth dynamical systems \cite{DaYo17,Ne04,StTa14}.

Condition \eqref{eq:expansionConditionIntro} is rather strong as
it requires $C$ to be invariant and expanding for every matrix in the possibly infinite set $\bM$.
But if $f$ is piecewise-linear then $\bM$ contains only as many matrices as pieces in the map.
For this reason invariant expanding cones are perfectly suited,
and perhaps under-utilised, for analysing piecewise-linear maps.
Invariant cones were central to Misiurewicz's strategy for
establishing hyperbolicity and transitivity in the Lozi map \cite{Mi80}.
More recently in \cite{GlSi19} an invariant expanding cone was constructed for the 2d BCNF 
to finally prove the widely held conjecture that a chaotic attractor exists
throughout a physically-important parameter regime $\cR$ that was first highlighted in \cite{BaYo98}.

In this paper we present a simple but powerful generalisation of the above approach
and use it to show that chaotic attractors persist beyond $\cR$,
in fact in some places right up to where there exist stable low-period solutions.
The idea is to let $\bM$ consist of certain products of the $A_i$, rather than of the $A_i$ themselves.
Each product $M \in \bM$ is characterised by a word $\cW$ connecting its constitute matrices to the pieces of $f$.
As long as the set of all such words $\bW$ generates the symbolic itinerary of the forward orbit of $x$,
then again \eqref{eq:expansionConditionIntro} implies $\lambda(x,v) > 0$.
This approach is quite flexible because there is a large amount of freedom in our choice of the set of words $\bW$.

To prove $f$ has a chaotic attractor
one needs to identify a trapping region $\Omega_{\rm trap} \subset \mathbb{R}^N$
(which ensures there exists a topological attractor),
a set of words $\bW$ that generates the symbolic itineraries for all $x \in \Omega_{\rm trap}$,
and an invariant expanding cone $C$ for the matrices corresponding to the words in $\bW$.
Below we find these objects for the 2d BCNF.
More precisely, we propose a way by which $\Omega_{\rm trap}$ and $C$ can be constructed for a particular word set $\bW$
and prove that all three objects have the required properties if a certain set of computable conditions are met.
While, for a given combination of parameter values, these conditions could be checked by hand,
it is more appropriate to check them numerically.
Below we formulate this as an algorithm (Algorithm \ref{al:theAlgorithm}).

The remainder of this paper is organised as follows.
We start in \S\ref{sec:bcnf} by showing where Algorithm \ref{al:theAlgorithm} detects a chaotic attractor
in a typical two-dimensional slice of the parameter space of the 2d BCNF.
Then in \S\ref{sec:theory} we clarify technical features mentioned above (cones, words, trapping regions, etc)
and express $\lambda(x,v)$ in terms of one-sided directional derivatives
in order to accommodate points whose forward orbits intersect a switching manifold (where $\rD f$ is not defined).
The result $\lambda(x,v) > 0$ is formalised by Theorem \ref{th:main}
for $N$-dimensional, continuous, piecewise-linear maps with two pieces.
The theorem is framed in terms of $\bW$-recurrent sets
(these are sets to which forward orbits return following one or more words in $\bW$).
Such sets provide a practical way by which the approach can be applied to concrete examples,
and in \S\ref{sec:mainProof} we show they imply that $\bW$ generates symbolic itineraries as needed.
Here we also characterise the matrices $A_i$ in the expression \eqref{eq:forwardOrbitsDifference}.
This is quite subtle because if $f^i(x)$ lies on a switching manifold then $A_i$ depends on $v$ as well as $x$.
Section \ref{sec:mainProof} concludes with a proof of Theorem \ref{th:main}.

In subsequent sections we work to apply this methodology to the 2d BCNF.
In \S\ref{sec:iec} we consider sets of $2 \times 2$ matrices
and devise a set of conditions for the existence of an invariant expanding cone.
In \S\ref{sec:fir} we connect consecutive points of an orbit to construct a polygon $\Omega$.
We then identify conditions implying $\Omega$ is forward invariant
and can be perturbed into a trapping region $\Omega_{\rm trap}$.
In \S\ref{sec:algorithm} we state Algorithm \ref{al:theAlgorithm} and prove its validity.
Here we comment further on the application of the algorithm to the 2d BCNF 
and discuss instances in which failure of the algorithm coincides with a bifurcation of $f$
at which the chaotic attractor is destroyed.
Concluding remarks are provided in \S\ref{sec:conc}.

%===============================================================================
\section{Chaotic attractors in the two-dimensional border-collision normal form}
\label{sec:bcnf}
\setcounter{equation}{0}

Let $f$ be a continuous map on $\mathbb{R}^2$ that is affine on each side of a line $\Sigma$.
Assume coordinates $x = (x_1,x_2) \in \mathbb{R}^2$ are chosen so that $\Sigma = \left\{ x \,\big|\, x_1 = 0 \right\}$.
If $f$ is generic in the sense that $f(\Sigma)$ intersects $\Sigma$ at exactly one point,
and this point is not a fixed point of $f$,
then there exists an affine coordinate change that puts $f$ into the form
\begin{equation}
f(x) = \begin{cases}
\begin{bmatrix} \tau_L & 1 \\ -\delta_L & 0 \end{bmatrix}
\begin{bmatrix} x_1 \\ x_2 \end{bmatrix} +
\begin{bmatrix} 1 \\ 0 \end{bmatrix}, & x_1 \le 0, \\
\begin{bmatrix} \tau_R & 1 \\ -\delta_R & 0 \end{bmatrix}
\begin{bmatrix} x_1 \\ x_2 \end{bmatrix} +
\begin{bmatrix} 1 \\ 0 \end{bmatrix}, & x_1 \ge 0,
\end{cases}
\label{eq:bcnf}
\end{equation}
where $\tau_L,\tau_R,\delta_L,\delta_R \in \mathbb{R}$.
The coordinate change required to arrive at \eqref{eq:bcnf} is provided in Appendix \ref{app:coordChange}.

The four parameter family \eqref{eq:bcnf} is the 2d BCNF of \cite{NuYo92} except the value of
the border-collision bifurcation parameter (usually denoted $\mu$) is fixed at $1$.
The family \eqref{eq:bcnf} has been studied extensively
to understand border-collision bifurcations (where a fixed point collides with a switching manifold)
arising in many applications, particularly vibrating mechanical systems with impacts or friction \cite{DiBu08,Si16}.
If $\tau_L = -\tau_R$ and $\delta_L = \delta_R$ then \eqref{eq:bcnf} reduces to the Lozi map \cite{Lo78}.

%%%%%%%%%%%%%%%%%%%%%%%%%%%%%%%%%%%%%%%%%%%%%%%%%%%%%%%%%%%%%
\begin{figure}[b!]
\begin{center}
\includegraphics[height=10cm]{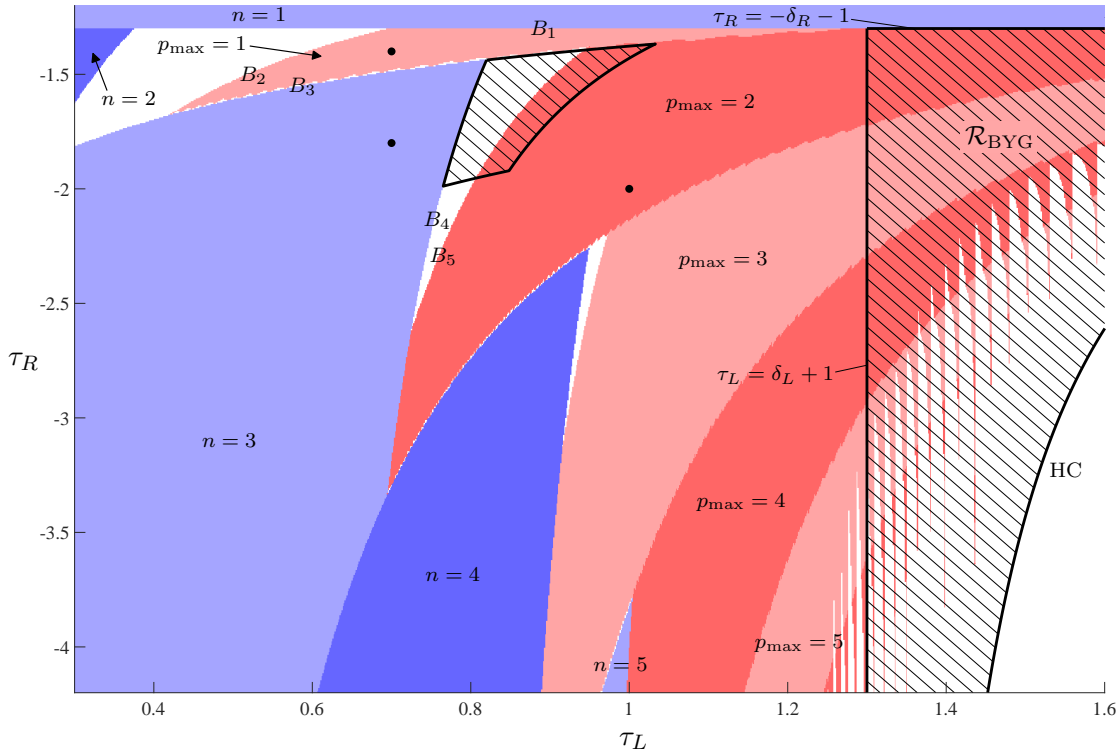}
\caption{
A two-parameter bifurcation diagram of the 2d BCNF \eqref{eq:bcnf} with \eqref{eq:dLdR}.
In the blue regions \eqref{eq:bcnf} has a stable period-$n$ solution for some $n \ge 1$
(in places where the $n=3$ and $n=4$ regions overlap only the $n=4$ region is shown).
In the red regions Algorithm \ref{al:theAlgorithm}
establishes the existence of chaotic attractor by using \eqref{eq:bW} for some $p_{\rm max} \ge 1$.
The three black dots indicate the parameter combinations examined in
Figs.~\ref{fig:RLpNumerics_ab}--\ref{fig:RLpNumerics_gh}
(see also Fig.~\ref{fig:RLpNumerics_ij}b).
The boundaries $B_1$ to $B_5$ are discussed in \S\ref{sec:algorithm}.
\label{fig:RLpNumerics_0}
}
\end{center}
\end{figure}
%%%%%%%%%%%%%%%%%%%%%%%%%%%%%%%%%%%%%%%%%%%%%%%%%%%%%%%%%%%%%

Fig.~\ref{fig:RLpNumerics_0} shows a two-dimensional slice
of the parameter space of \eqref{eq:bcnf} defined by fixing
\begin{align}
\delta_L &= 0.3, &
\delta_R &= 0.3.
\label{eq:dLdR}
\end{align}
Different values of $\delta_L, \delta_R \in (0,1)$ produce qualitatively similar pictures.
In the blue regions \eqref{eq:bcnf} has a stable period-$n$ solution for $1 \le n \le 5$.
To the right of the curve labelled HC (where the fixed point in $x_1 < 0$ attains a homoclinic connection) there is no attractor.
Numerical explorations suggest that in all other areas of Fig.~\ref{fig:RLpNumerics_0}
(i.e.~left of the HC curve and not inside a blue region) \eqref{eq:bcnf}
has a chaotic attractor \cite{BaGr99}.
These parameter values are physically relevant because \eqref{eq:bcnf}
is orientation-preserving and dissipative whenever $\delta_L, \delta_R \in (0,1)$.

The two striped regions of Fig.~\ref{fig:RLpNumerics_0} are of particular interest.
In the central striped region numerical simulations indicate that \eqref{eq:bcnf} has two chaotic attractors \cite{GlSi19b}.
The region $\cR$ (bounded by the HC curve and the lines $\tau_L = \delta_L + 1$ and $\tau_R = -\delta_R - 1$)
is the `robust chaos' parameter regime of \cite{BaYo98} (restricted to \eqref{eq:dLdR}).
This parameter regime is exactly where \eqref{eq:bcnf} has two saddle fixed points,
one with positive eigenvalues and one with negative eigenvalues.
Fig.~\ref{fig:RLpNumerics_ij}a shows a typical chaotic attractor in $\cR$.

%%%%%%%%%%%%%%%%%%%%%%%%%%%%%%%%%%%%%%%%%%%%%%%%%%%%%%%%%%%%%
\begin{figure}[b!]
\begin{center}
\includegraphics[height=4.9cm]{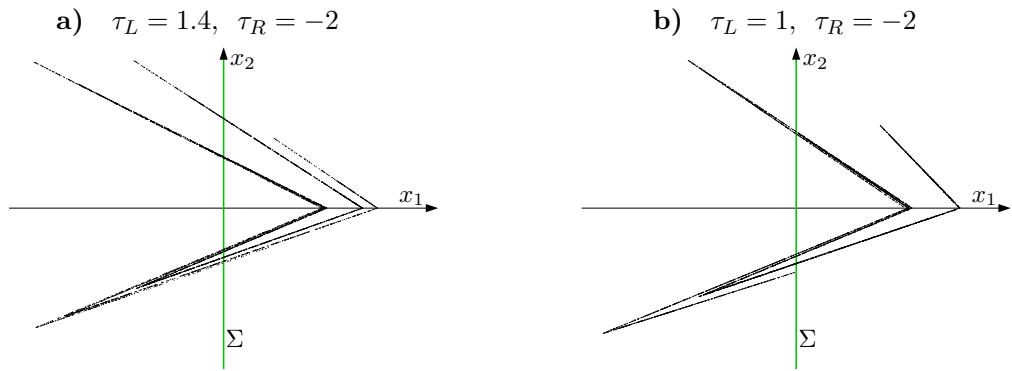}
\caption{
Chaotic attractors of \eqref{eq:bcnf} with \eqref{eq:dLdR}.
More precisely, these plots show $5000$ iterates of the forward orbit of the origin with some transient points removed.
\label{fig:RLpNumerics_ij}
}
\end{center}
\end{figure}
%%%%%%%%%%%%%%%%%%%%%%%%%%%%%%%%%%%%%%%%%%%%%%%%%%%%%%%%%%%%%

In \cite{GlSi19} it was shown that throughout $\cR$ \eqref{eq:bcnf}
has an attractor with a positive Lyapunov exponent.
This was achieved by constructing a trapping region and an invariant expanding cone for both matrices in \eqref{eq:bcnf}.
That is, the methodology of this paper was used with $\bW = \{ L, R \}$.
In \cite{GlSi19} it was not necessary to show that the symbolic itineraries of the forward orbits of points in the trapping region
are generated by $\bW$, because here $\bW$ generates all symbolic itineraries.

The approach of \cite{GlSi19} fails to find a chaotic attractor in $\tau_L < \delta_L + 1$
because here both eigenvalues of $A_L = \begin{bmatrix} \tau_L & 1 \\ -\delta_L & 0 \end{bmatrix}$ have modulus less than $1$,
so there does not exist an invariant expanding cone for $A_L$.
Therefore, if we are to show that \eqref{eq:bcnf} has a chaotic attractor with $\tau_L < \delta_L + 1$,
we cannot include $L$ in our word set $\bW$.
We instead use word sets of the form
\begin{equation}
\bW = \left\{ R L^p \,\big|\, 0 \le p \le p_{\rm max} \right\}.
\label{eq:bW}
\end{equation}
This is clarified in \S\ref{sub:words}.
The red regions of Fig.~\ref{fig:RLpNumerics_0} show where Algorithm \ref{al:theAlgorithm}
finds an attractor with a positive Lyapunov exponent by using \eqref{eq:bW}
with some $p_{\rm max} \ge 1$
over a $1024 \times 512$ grid of $(\tau_L,\tau_R)$-values
(see Fig.~\ref{fig:RLpNumerics_ij}b for a typical attractor).
The algorithm is highly effective in that it establishes chaos over about $90\%$
of parameter space between the blue regions and $\tau_L = \delta_L + 1$
(and also succeeds in part of $\cR$).
In fact in some places there is no gap between the blue and red regions
meaning that the algorithm finds a chaotic attractor right up to the bifurcation at which
the attractor is destroyed (this is discussed further in \S\ref{sec:algorithm}).

%===============================================================================
\section{Main definitions and a bound on the Lyapunov exponent}
\label{sec:theory}
\setcounter{equation}{0}

In this section we define the main objects
and state the main theoretical result, Theorem \ref{th:main}.
In order to arrive at Theorem \ref{th:main} quickly
some discussion is deferred to \S\ref{sec:mainProof}.

%-------------------------------------------------------------------------------
\subsection{Trapping regions and topological attractors}
\label{sub:attractors}

Here we provide topological definitions for a continuous map $f$ on $\mathbb{R}^N$,
see for instance \cite{GuHo86,Ro04} for further details.

%...............................................................................
\begin{definition}
A set $\Omega \subseteq \mathbb{R}^N$ is {\em forward invariant} if $f(\Omega) \subseteq \Omega$.
\label{df:forwardInvariant}
\end{definition}

%...............................................................................
\begin{definition}
A compact set $\Omega_{\rm trap} \subset \mathbb{R}^N$ is a {\em trapping region} if
$f \left( \Omega_{\rm trap} \right) \subseteq {\rm int} \left( \Omega_{\rm trap} \right)$.
(where ${\rm int}(\cdot)$ denotes interior).
\label{df:trappingRegion}
\end{definition}

%...............................................................................
\begin{definition}
An {\em attracting set} is
$\Lambda = \cap_{i=0}^\infty f^i(\Omega_{\rm trap})$
for some trapping region $\Omega_{\rm trap} \subset \mathbb{R}^N$.
\label{df:attractingSet}
\end{definition}

Topological attractors are invariant subsets of an attracting set that satisfy some kind of indivisibility condition
(e.g.~they contain a dense orbit) \cite{Me07}.
In this paper it is not necessary to consider such conditions as 
we only seek to show that $f$ has an attractor and this is achieved by showing that $f$ has a trapping region.

%-------------------------------------------------------------------------------
\subsection{Cones}
\label{sub:cones}

We denote the tangent space to a point $x \in \mathbb{R}^N$ by $T \mathbb{R}^N$.
The tangent space is isomorphic to $\mathbb{R}^N$ and
indeed we often treat tangent vectors $v \in T \mathbb{R}^N$ as elements of $\mathbb{R}^N$,
as in the expression $x + \delta v$.

%...............................................................................
\begin{definition}
A nonempty set $C \subseteq T \mathbb{R}^N$
is said be to a {\em cone}
if $t v \in C$ for all $v \in C$ and all $t \in \mathbb{R}$.
\label{df:cone}
\end{definition}

%...............................................................................
\begin{definition}
Let $M$ be an $N \times N$ matrix.
A cone $C \subseteq T \mathbb{R}^N$ is said to be {\em invariant} under $M$ if
\begin{equation}
M v \in C, \qquad \text{for all}~ v \in C.
\label{eq:coneInvariant}
\end{equation}
The cone is said to be {\em expanding} under $M$ if there exists $c > 1$ such that
\begin{equation}
\| M v \| \ge c \| v \|, \qquad \text{for all}~ v \in C,
\label{eq:coneExpanding}
\end{equation}
see Fig.~\ref{fig:RLpSchem_k}.
For a finite set of real-valued $N \times N$ matrices $\bM$, we say
$C$ is {\em invariant} [resp.~{\em expanding}]
if it is invariant [resp.~expanding] under every $M \in \bM$.
\label{df:iec}
\end{definition}

%%%%%%%%%%%%%%%%%%%%%%%%%%%%%%%%%%%%%%%%%%%%%%%%%%%%%%%%%%%%%
\begin{figure}[t!]
\begin{center}
\includegraphics[height=3cm]{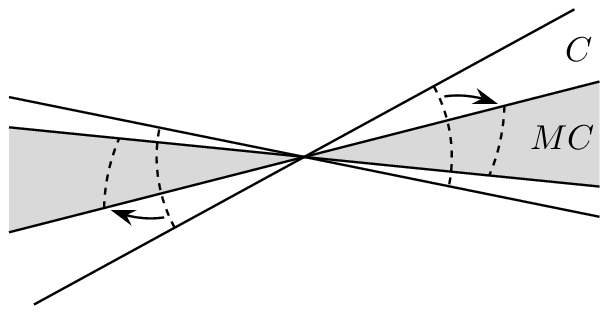}
\caption{
A sketch of an invariant expanding cone $C$
and its image $M C = \left\{ M v \,\big|\, v \in C \right\}$.
\label{fig:RLpSchem_k}
}
\end{center}
\end{figure}
%%%%%%%%%%%%%%%%%%%%%%%%%%%%%%%%%%%%%%%%%%%%%%%%%%%%%%%%%%%%%

%-------------------------------------------------------------------------------
\subsection{One-sided directional derivatives and Lyapunov exponents}
\label{sub:directionalDerivatives}

The {\em one-sided directional derivative} of a function $\phi : \mathbb{R}^N \to \mathbb{R}^N$
at $x \in \mathbb{R}^N$ in a direction $v \in T \mathbb{R}^N$ is defined as
\begin{equation}
\rD^+_v \phi(x) = \lim_{\delta \to 0^+} \frac{\phi(x + \delta v) - \phi(x)}{\delta},
\label{eq:Dplus}
\end{equation}
if this limit exists \cite{DhDu12,Ro70b}.
If $\rD^+_v f^n(x)$ exists for all $n \ge 1$, then taking $\delta \to 0^+$ in \eqref{eq:lambdaAsyDefn} gives
\begin{equation}
\left\| \rD^+_v f^n(x) \right\| \sim \re^{\lambda n} \| v \|.
\nonumber
\end{equation}
By further taking $n \to \infty$ we arrive at
\begin{equation}
\lambda(x,v) = \limsup_{n \to \infty} \frac{1}{n} \ln \left( \left\| \rD^+_v f^n (x) \right\| \right),
\label{eq:lambdaDefn}
\end{equation}
where, following usual convention, the supremum limit is taken
because from the point of view of ascertaining stability
one wants to record the largest possible fluctuations.

If $f$ is $C^1$ then the Jacobian matrix $\rD f$ exists everywhere and \eqref{eq:lambdaDefn} becomes
\begin{equation}
\lambda(x,v) = \limsup_{n \to \infty} \frac{1}{n} \ln \left( \left\| \rD f^n (x) v \right\| \right).
\nonumber
\end{equation}
Oseledet's theorem gives conditions under which $\lambda(x,v)$ takes at most $N$ values
for almost all $x$ in an invariant set \cite{BaPe13,Vi14}.
But this is often not the case for piecewise-linear maps.
As a minimal example, consider the one-dimensional map
\begin{equation}
f(x) = \begin{cases}
a_L x, & x \le 0, \\
a_R x, & x \ge 0,
\end{cases}
\label{eq:f1d}
\end{equation}
shown in Fig.~\ref{fig:RLpSchem_g}.
If $a_L, a_R > 0$ and $a_L \ne a_R$ then the fixed point $x=0$ has two different Lyapunov exponents:
$\lambda(0,-1) = \ln(a_L)$ and $\lambda(0,1) = \ln(a_R)$.

%%%%%%%%%%%%%%%%%%%%%%%%%%%%%%%%%%%%%%%%%%%%%%%%%%%%%%%%%%%%%
\begin{figure}[t!]
\begin{center}
\includegraphics[height=3cm]{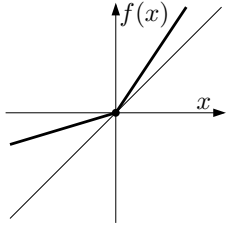}
\caption{
A sketch of the one-dimensional map \eqref{eq:f1d}.
\label{fig:RLpSchem_g}
}
\end{center}
\end{figure}
%%%%%%%%%%%%%%%%%%%%%%%%%%%%%%%%%%%%%%%%%%%%%%%%%%%%%%%%%%%%%

%-------------------------------------------------------------------------------
\subsection{Two-piece, piecewise-linear, continuous maps}
\label{sub:f}

For ease of explanation we develop our methodology for piecewise-linear maps comprised of only two pieces.
The extension to maps with more pieces is expected to be reasonably straight-forward.
Specifically we consider maps of the form
\begin{equation}
f(x) = \begin{cases}
A_L x + b, & x_1 \le 0, \\
A_R x + b, & x_1 \ge 0,
\end{cases}
\label{eq:f}
\end{equation}
where $A_L$ and $A_R$ are $N \times N$ matrices and $b \in \mathbb{R}^N$.
The assumption that $f$ is continuous on the switching manifold $\Sigma = \left\{ x \,\big|\, x_1 = 0 \right\}$
implies that $A_L$ and $A_R$ differ in only their first columns.
That is,
\begin{equation}
A_R - A_L = \zeta e_1^{\sf T},
\label{eq:ARminusAL}
\end{equation}
for some $\zeta \in \mathbb{R}^N$,
where $e_1$ is the first standard basis vector of $\mathbb{R}^N$.

If $\det(A_L) \ne 0$ and $\det(A_R) \ne 0$ then $f$ maps
the half-spaces $x_1 \le 0$ and $x_1 \ge 0$ in a one-to-one fashion to half-spaces with boundary $f(\Sigma)$.
If $\det(A_L) \det(A_R) < 0$ then $f$ is not invertible (it is of type $Z_0$-$Z_2$ \cite{MiGa96})
because $x_1 \le 0$ and $x_1 \ge 0$ are mapped to the same half-space.
If $\det(A_L) \det(A_R) > 0$ then $f$ is invertible (see \cite{Si16} for an explicit expression for $f^{-1}$)
and so we have the following result.

%...............................................................................
\begin{lemma}
The map \eqref{eq:f} is invertible if and only if $\det(A_L) \det(A_R) > 0$.
\label{le:fInverse}
\end{lemma}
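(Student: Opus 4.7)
The plan is to characterise non-invertibility of $f$ through failure of injectivity across the switching manifold $\Sigma$. First I would dispose of the degenerate case. If $\det(A_L) = 0$ (the case $\det(A_R) = 0$ is symmetric), then $A_L$ has a nontrivial kernel, so choosing a nonzero $v \in \ker A_L$ with $v_1 \le 0$ (replacing $v$ by $-v$ if necessary) gives $0, v \in \{x_1 \le 0\}$ with $f(0) = f(v) = b$. Hence $f$ is not injective, and $\det(A_L)\det(A_R) = 0$ is consistent with the claim. So we may assume both matrices are nonsingular, in which case each piece is an affine bijection of $\mathbb{R}^N$ and in particular injective on its closed half-space.

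In this nonsingular case, the only possible obstruction to bijectivity of $f$ comes from collisions between the images of the two pieces. A collision is an $x, y$ with $x_1 < 0 < y_1$ and $A_L x = A_R y$; writing $z = A_L x$, this asks for $z$ such that $e_1^{\sf T} A_L^{-1} z < 0$ and $e_1^{\sf T} A_R^{-1} z > 0$. So the whole question reduces to the relationship between the row covectors $e_1^{\sf T} A_L^{-1}$ and $e_1^{\sf T} A_R^{-1}$.

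Here the continuity condition \eqref{eq:ARminusAL} does all the work: because $A_R = A_L + \zeta e_1^{\sf T}$ is a rank-one perturbation of $A_L$, the Sherman--Morrison formula gives
\begin{equation}
A_R^{-1} = A_L^{-1} - \frac{A_L^{-1}\zeta\, e_1^{\sf T} A_L^{-1}}{1 + e_1^{\sf T} A_L^{-1} \zeta},
\nonumber
\end{equation}
while the matrix determinant lemma gives $\det(A_R) = \det(A_L)\bigl(1 + e_1^{\sf T} A_L^{-1}\zeta\bigr)$. Left-multiplying the displayed identity by $e_1^{\sf T}$ and simplifying,
\begin{equation}
e_1^{\sf T} A_R^{-1} = \frac{\det(A_L)}{\det(A_R)}\, e_1^{\sf T} A_L^{-1},
\nonumber
\end{equation}
so the two covectors are always parallel, with ratio $\det(A_L)/\det(A_R)$.

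The conclusion then splits on the sign of that ratio. If $\det(A_L)\det(A_R) > 0$ the ratio is positive, so $e_1^{\sf T} A_L^{-1} z$ and $e_1^{\sf T} A_R^{-1} z$ share their sign for every $z$: no collision exists (injectivity), and moreover for any $z \in \mathbb{R}^N$ at least one of $A_L^{-1} z, A_R^{-1} z$ lies in the correct half-space, so $z + b \in f(\mathbb{R}^N)$ (surjectivity). If $\det(A_L)\det(A_R) < 0$ the ratio is negative, and for any $z$ with $e_1^{\sf T} A_L^{-1} z \neq 0$ the two expressions have opposite signs, producing an explicit collision and hence non-injectivity. I expect no substantial obstacle: the only care required is in handling strict versus non-strict inequalities at $\Sigma$, which is harmless because $f$ is continuous and the two affine pieces already agree on $\Sigma$.
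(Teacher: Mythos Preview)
Your argument is correct and self-contained. The paper does not give a formal proof environment for this lemma; the justification is the short paragraph immediately preceding the statement, which takes a geometric rather than algebraic view: when $A_L$ and $A_R$ are nonsingular, each closed half-space $\{x_1 \le 0\}$, $\{x_1 \ge 0\}$ is mapped bijectively onto a closed half-space with boundary $f(\Sigma)$, and the two image half-spaces either coincide (giving a non-invertible $Z_0$--$Z_2$ map) or are complementary (giving a homeomorphism) according as $\det(A_L)\det(A_R)$ is negative or positive, with the explicit inverse deferred to \cite{Si16}. Your Sherman--Morrison computation is the algebraic counterpart of this orientation argument: the identity $e_1^{\sf T} A_R^{-1} = \tfrac{\det(A_L)}{\det(A_R)}\, e_1^{\sf T} A_L^{-1}$ says precisely that the two preimage covectors defining $f(\Sigma)$ are parallel, with the sign of the scalar factor deciding whether the image half-spaces agree or oppose. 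Your route has the advantage of being fully explicit in all dimensions and of disposing of the degenerate $\det = 0$ case directly; the paper's route is quicker to state and makes the geometric picture immediate but leans on an external reference for the inverse.
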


%-------------------------------------------------------------------------------
\subsection{Words as symbolic representations of finite parts of orbits}
\label{sub:words}

To describe the symbolic itineraries of orbits of \eqref{eq:f} relative to $\Sigma$
we use words (defined here) and symbol sequences (defined in \S\ref{sub:symbolSequences})
on the alphabet $\{ L, R \}$.

%...............................................................................
\begin{definition}
A {\em word} of length $n \ge 1$ is a function $\cW : \{ 0,1,\ldots,n-1 \} \to \{ L, R \}$
and we write $\cW = \cW_0 \cW_1 \cdots \cW_{n-1}$. % (where each $\cW_i \in \{ L, R \}$).
\label{df:word}
\end{definition}

Sometimes we abbreviate $k$ consecutive instances of a symbol by putting $k$ as a superscript.
For example $\cW = RL^3 = RLLL$ is a word of length four with
$\cW_0 = R$, $\cW_1 = L$, $\cW_2 = L$, and $\cW_3 = L$.

In order to obtain words from orbits we first define the following set-valued function on $\mathbb{R}^N$:
\begin{equation}
\gamma_{\rm set}(x) = \begin{cases}
\{ L \}, & x_1 < 0, \\
\{ L, R \}, & x_1 = 0, \\
\{ R \}, & x_1 > 0.
\end{cases}
\label{eq:gammaSet}
\end{equation}
Then, given $x \in \mathbb{R}^N$ and $n \ge 1$, we define
\begin{equation}
\Gamma(x;n) = \left\{ \cW : \{ 0,1,\ldots,n-1 \} \to \{ L, R \} \,\Big|\,
\cW_i \in \gamma_{\rm set} \big( f^i(x) \big) ~\text{for all}~ i = 0,1,\ldots,n-1 \right\}.
\label{eq:Gamman}
\end{equation}
Notice that if $f^i(x) \in \Sigma$ for $m$ values of $i \in \{ 0,1,\ldots,n-1 \}$,
then $\Gamma(x;n)$ contains $2^m$ words.
For example for Fig.~\ref{fig:RLpSchem_m} we have $\Gamma(x;3) = \{ RLL, RRL \}$.

Symbolic representations are often instead defined in a way
that produces a unique word for every $x$ and $n$ \cite{HaZh98}. % also {Ki98,LiMa95}
In \S\ref{sub:h} we will see how the above formulation is particularly convenient for describing $\rD^+_v f^n(x)$.

%%%%%%%%%%%%%%%%%%%%%%%%%%%%%%%%%%%%%%%%%%%%%%%%%%%%%%%%%%%%%
\begin{figure}[b!]
\begin{center}
\includegraphics[height=3cm]{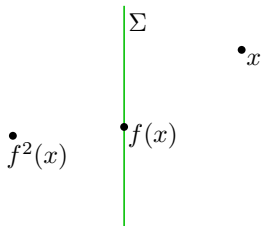}
\caption{
Part of an orbit for which $\Gamma(x;3) = \{ RLL, RRL \}$, see \eqref{eq:Gamman}.
\label{fig:RLpSchem_m}
}
\end{center}
\end{figure}
%%%%%%%%%%%%%%%%%%%%%%%%%%%%%%%%%%%%%%%%%%%%%%%%%%%%%%%%%%%%%

%-------------------------------------------------------------------------------
\subsection{Sufficient conditions for a positive Lyapunov exponent}
\label{sub:mainTheorem}

Here we state our main result for obtaining $\lambda(x,v) > 0$.
To do this we first provide a few more definitions
that are discussed further in \S\ref{sec:mainProof}.

Given a finite set of words $\bW$, let
\begin{equation}
\bM = \left\{ \pM(\cW) \,\middle|\, \cW \in \bW \right\},
\label{eq:bM}
\end{equation}
where
\begin{equation}
\pM(\cW) = A_{\cW_{n-1}} \cdots A_{\cW_1} A_{\cW_0} \,.
\label{eq:MW}
\end{equation}
Roughly speaking, $\pM(\cW)$ is equal to $\rD f^n$ for orbits
that map under $f$ following the word $\cW$.

%...............................................................................
\begin{definition}
A set $\Omega_{\rm rec} \subset \mathbb{R}^N$ is said to be {\em $\bW$-recurrent}
if for all $x \in \Omega_{\rm rec}$ there exists $n \ge 1$ such that
$f^n(x) \in \Omega_{\rm rec}$ and every $\cW \in \Gamma(x;n)$ can be written
as a concatenation of words in $\bW$.
\label{df:recurrent}
\end{definition}

%...............................................................................
\begin{theorem}
Suppose $\Omega_{\rm rec}$ is $\bW$-recurrent.
Suppose there exists an invariant expanding cone for $\bM$.
Then there exists $\lambda_{\rm bound} > 0$ such that for all $x \in \Omega_{\rm rec}$
\begin{equation}
\liminf_{n \to \infty} \frac{1}{n} \ln \left( \left\| \rD^+_v f^n(x) \right\| \right) \ge \lambda_{\rm bound} \,,
\qquad \text{for some}~v \in T \mathbb{R}^N.
\label{eq:liminf}
\end{equation}
Moreover, if $f$ is invertible and $\Omega_{\rm trap} \subset \bigcup_{i=-\infty}^\infty f^i(\Omega_{\rm rec})$ is a trapping region for $f$,
then $f$ has an attractor $\Lambda \subset \Omega_{\rm trap}$
and \eqref{eq:liminf} holds for all $x \in \Lambda$.
\label{th:main}
\end{theorem}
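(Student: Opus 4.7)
The plan is to iterate the $\bW$-recurrence property to obtain a sequence of return times $N_1 < N_2 < \cdots$ at which the orbit of $x$ returns to $\Omega_{\rm rec}$ and the symbolic itinerary up to time $N_k$ is a concatenation of at least $k$ words from $\bW$. Invariance and expansion of $C$ then force $\rD^+_v f^{N_k}(x)$ to grow like $c^k$, which produces a positive exponential rate relative to $N_k$ because $\bW$ has bounded word length.

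In detail, fix $x \in \Omega_{\rm rec}$ and a nonzero $v \in C$, let $c > 1$ be the expansion constant, and set $n_{\max} = \max_{\cW \in \bW} |\cW|$. Applying Definition \ref{df:recurrent} inductively produces return times $N_k$ with $f^{N_k}(x) \in \Omega_{\rm rec}$ such that every $\cW \in \Gamma(x; N_k)$ decomposes as a concatenation of some $L_k$ words from $\bW$ with $L_k \geq k$ and $N_k \leq L_k n_{\max}$. Using the representation of $\rD^+_v f^n(x)$ as $\pM(\cW) v$ for a word $\cW \in \Gamma(x; n)$ (to be set up in \S\ref{sub:h}), we write $\rD^+_v f^{N_k}(x) = M_{L_k} \cdots M_1 v$ with each $M_j \in \bM$. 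Cone invariance keeps every partial product $M_j \cdots M_1 v$ inside $C$, and cone expansion delivers $\|\rD^+_v f^{N_k}(x)\| \geq c^{L_k}\|v\| \geq c^{N_k/n_{\max}}\|v\|$.

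To upgrade this subsequential estimate into a $\liminf$ bound over all $n$, for $N_k \leq n < N_{k+1}$ observe that $\rD^+_v f^{N_{k+1}}(x)$ arises from $\rD^+_v f^n(x)$ by at most $n_{\max}$ further matrix multiplications, each of operator norm at most $\alpha := \max(\|A_L\|,\|A_R\|,1)$, so $\|\rD^+_v f^n(x)\| \geq \alpha^{-n_{\max}} c^{L_{k+1}}\|v\|$. Dividing by $n$, taking logarithms, and sending $n \to \infty$ yields \eqref{eq:liminf} with $\lambda_{\rm bound} = \ln(c)/n_{\max}$, uniformly in $x \in \Omega_{\rm rec}$.

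For the second part, invertibility of $f$ makes $\Lambda$ $f$-invariant, and each $x \in \Lambda \subseteq \bigcup_{i \in \mathbb{Z}} f^i(\Omega_{\rm rec})$ has the form $x = f^i(y)$ with $y \in \Omega_{\rm rec}$ and $i \in \mathbb{Z}$. The chain rule for one-sided directional derivatives then transfers the first-part bound from $y$ to $x$: push a chosen $v \in C$ forward through $f^i$ when $i \geq 0$, or, when $i < 0$, invert the linear piece of $f^{|i|}$ linking $x$ to $y$ (invertible because $f$ is) to obtain a direction at $x$ that maps to $v$ at $y$. The main obstacle is the derivative factorization invoked in the second paragraph: establishing that $\rD^+_v f^n(x) = \pM(\cW) v$ for some $\cW \in \Gamma(x; n)$ requires careful bookkeeping at orbit points lying on $\Sigma$, where $\rD f$ is undefined and the "correct" symbol is dictated by the perturbation direction $v$ rather than by $x$ alone — this is precisely why the selection function $\gamma_{\rm set}$ and the multi-valued $\Gamma(x; n)$ were introduced.
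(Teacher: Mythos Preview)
Your approach is essentially the paper's: decompose the itinerary into words from $\bW$, use cone invariance and expansion to get growth $\ge c^{L_k}$, and convert this to the rate $\ln(c)/n_{\max}$ via the bounded word length. The paper packages this as two lemmas (Lemma~\ref{le:generates}, showing $\bW$ generates every $\cS\in\Gamma(x)$, and Lemma~\ref{le:generatesBound}, doing the cone estimate along the subsequence of cumulative word lengths), but the substance is the same, including the transfer to $f^i(x)$ via the invertibility of the tangent-bundle map $h$.

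There is one genuine slip in your interpolation step. Your times $N_k$ are returns to $\Omega_{\rm rec}$, and nothing in Definition~\ref{df:recurrent} bounds the gap $N_{k+1}-N_k$: a single return may correspond to a concatenation of many words from $\bW$ (this is exactly why you wrote $L_k\ge k$ rather than $L_k=k$). Hence the claim ``at most $n_{\max}$ further matrix multiplications'' from $n$ to $N_{k+1}$ is unjustified, and the bound $\|\rD^+_v f^n(x)\|\ge \alpha^{-n_{\max}}c^{L_{k+1}}\|v\|$ does not follow. The fix is to replace the $N_k$ by the finer subsequence of cumulative word lengths in the decomposition of the $v$-itinerary (the paper's $n_j=L_{\alpha_0}+\cdots+L_{\alpha_{j-1}}$ in Lemma~\ref{le:generatesBound}); consecutive terms of \emph{that} sequence differ by at most $n_{\max}$, so your $\alpha^{-n_{\max}}$ correction then applies legitimately and the $\liminf$ bound with $\lambda_{\rm bound}=\ln(c)/n_{\max}$ follows.
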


Observe \eqref{eq:liminf} implies $\lambda(x,v) > 0$
because the supremum limit is greater than or equal to the infimum limit.
Theorem \ref{th:main} is proved in the next section.

%===============================================================================
\section{Relationships between words, recurrent sets, and directional derivatives}
\label{sec:mainProof}
\setcounter{equation}{0}

Here we look deeper at the concepts introduced in the previous section for maps of the form \eqref{eq:f}.
First in \S\ref{sub:symbolSequences} we take the limit $n \to \infty$ in \eqref{eq:Gamman}
to obtain a set of symbol sequences for the forward orbit of a point $x \in \mathbb{R}^N$.
The key observation is that these sequences are generated by $\bW$ whenever $x$ belongs to a $\bW$-recurrent set.
Then in \S\ref{sub:h} we show that $D^+_v f^n(x)$ exists for all $x$, $v$, and $n$,
and describe it in terms of the product \eqref{eq:MW}.
Finally in \S\ref{sub:mainProof} we prove Theorem \ref{th:main}.

%-------------------------------------------------------------------------------
\subsection{Symbol sequences and generating word sets}
\label{sub:symbolSequences}

%...............................................................................
\begin{definition}
A {\em symbol sequence} is a function $\cS : \mathbb{N} \to \{ L, R \}$
and we write $\cS = \cS_0 \cS_1 \cS_2 \cdots$.
\label{df:symbolSequence}
\end{definition}

%...............................................................................
\begin{definition}
Let $\bW = \{ \cW^{(1)}, \ldots, \cW^{(k)} \}$ be a finite set of words.
We say that $\bW$ {\em generates} a symbol sequence $\cS$ if there exists
a sequence $\alpha : \mathbb{N} \to \{ 1,\ldots,k \}$ such that $\cS = \cW^{(\alpha_0)} \cW^{(\alpha_1)} \cW^{(\alpha_2)} \cdots$.
\label{df:generate}
\end{definition}

For example the set $\bW = \{ R, RL, RLL \}$ generates $\cS$ if and only if $\cS_0 = R$ and $\cS$ does not contain the word $LLL$.
In the language of coding $LLL$ is a {\em forbidden word}
and the set of sequences generated by $\bW$ is a {\em run-length limited shift} \cite{LiMa95}.

Next define
\begin{equation}
\Gamma(x) = \left\{ \cS : \mathbb{N} \to \{ L, R \} \,\Big|\,
\cS_i \in \gamma_{\rm set} \big( f^i(x) \big) ~\text{for all}~ i \ge 0 \right\},
\label{eq:Gamma}
\end{equation}
which represents the $n \to \infty$ limit of \eqref{eq:Gamman}.

%...............................................................................
\begin{lemma}
Let $\Omega_{\rm rec}$ be $\bW$-recurrent.
Then $\bW$ generates every $\cS \in \Gamma(x)$ for all $x \in \Omega_{\rm rec}$.
\label{le:generates}
\end{lemma}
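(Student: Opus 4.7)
The plan is a straightforward iterative application of the $\bW$-recurrence property. Fix $x \in \Omega_{\rm rec}$ and $\cS \in \Gamma(x)$. I would build an increasing sequence of indices $0 = m_0 < m_1 < m_2 < \cdots$ together with a sequence of points $y_j = f^{m_j}(x) \in \Omega_{\rm rec}$ as follows. Set $y_0 = x$. Having defined $y_j \in \Omega_{\rm rec}$, apply Definition \ref{df:recurrent} to get an integer $n_j \ge 1$ such that $f^{n_j}(y_j) \in \Omega_{\rm rec}$ and every word in $\Gamma(y_j; n_j)$ is a concatenation of words in $\bW$. Then set $m_{j+1} = m_j + n_j$ and $y_{j+1} = f^{n_j}(y_j) = f^{m_{j+1}}(x)$, which lies in $\Omega_{\rm rec}$ by construction. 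Since $n_j \ge 1$ for all $j$, the sequence $m_j$ strictly increases and tends to $\infty$.

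Next I would slice $\cS$ along this index sequence. Define, for each $j \ge 0$, the finite word
\begin{equation}
\cV^{(j)} = \cS_{m_j} \cS_{m_j + 1} \cdots \cS_{m_{j+1} - 1},
\nonumber
\end{equation}
which has length $n_j$. I need to verify that $\cV^{(j)} \in \Gamma(y_j; n_j)$. For each $0 \le i \le n_j - 1$, the definition \eqref{eq:Gamma} of $\Gamma(x)$ gives $\cS_{m_j + i} \in \gamma_{\rm set}(f^{m_j + i}(x)) = \gamma_{\rm set}(f^i(y_j))$, which is exactly the condition \eqref{eq:Gamman} for membership in $\Gamma(y_j; n_j)$. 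Therefore, by the recurrence property applied at $y_j$, each $\cV^{(j)}$ is itself a concatenation of words in $\bW$, say $\cV^{(j)} = \cW^{(\alpha_{j,1})} \cW^{(\alpha_{j,2})} \cdots \cW^{(\alpha_{j,r_j})}$ with each $\alpha_{j,\ell} \in \{1,\ldots,k\}$.

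Finally, because the $m_j$ exhaust $\mathbb{N}$ and the $\cV^{(j)}$ are obtained by consecutively reading off the entries of $\cS$, the concatenation $\cV^{(0)} \cV^{(1)} \cV^{(2)} \cdots$ equals $\cS$. Interleaving the $\bW$-decompositions of the $\cV^{(j)}$ yields a single sequence $\alpha : \mathbb{N} \to \{1,\ldots,k\}$ with $\cS = \cW^{(\alpha_0)} \cW^{(\alpha_1)} \cW^{(\alpha_2)} \cdots$, so $\bW$ generates $\cS$ as required by Definition \ref{df:generate}.

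There is no substantial analytic obstacle; the proof is entirely combinatorial, and the only point that requires care is the bookkeeping in the second step, namely checking that the restriction of $\cS$ to a window starting at $m_j$ is indeed a valid element of $\Gamma(y_j; n_j)$. This follows immediately once one observes that $\gamma_{\rm set}(f^{m_j + i}(x)) = \gamma_{\rm set}(f^i(y_j))$, so no information about $\cS$ is lost when passing to the shifted orbit.
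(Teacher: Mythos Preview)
Your proof is correct and follows essentially the same approach as the paper's: both iteratively apply the $\bW$-recurrence property to build a sequence of return times, slice $\cS$ accordingly, and observe that each slice lies in $\Gamma(y_j;n_j)$ and hence decomposes over $\bW$. Your version is slightly more explicit in verifying $\cV^{(j)} \in \Gamma(y_j;n_j)$, but the argument is otherwise identical.
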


%...............................................................................
\begin{proof}
Choose any $x \in \Omega_{\rm rec}$ and $\cS \in \Gamma(x)$.
Let $x_0 = x$ and $\cS^{(0)} = \cS$.
By an inductive argument we have that for all $j \ge 0$ there exists $n_j \ge 1$ such that
$x_{j+1} = f^{n_j}(x_j) \in \Omega_{\rm rec}$
and $\cS^{(j)} = \cX^{(j)} \cS^{(j+1)}$, where $\cS^{(j+1)} \in \Gamma(x_{j+1})$
and $\cX^{(j)}$ is a word of length $n_j$ that can be written as a concatenation of words in $\bW$.
Then $\cS = \cX^{(0)} \cX^{(1)} \cX^{(2)} \cdots$ as required.
\end{proof}

%-------------------------------------------------------------------------------
\subsection{A characterisation of one-sided directional derivatives}
\label{sub:h}

So far we have constructed sets of words $\Gamma(x;n)$ and sets of symbol sequences $\Gamma(x)$
to describe the forward orbit of a point $x$.
These have the advantage that they only depend on $x$, so they are relatively simple to describe and analyse.
However, in order to identify the matrices $A_i$ in the expression \eqref{eq:forwardOrbitsDifference}
(which we use to evaluate $D^+_v f^n(x)$),
we also require knowledge of $v$ (albeit only for forward orbits that intersect $\Sigma$).
To this end we define
\begin{equation}
\gamma((x,v)) = \begin{cases}
L \,, & x_1 < 0, ~\text{or}~ x_1 = 0 ~\text{and}~ v_1 < 0, \\
R \,, & x_1 > 0, ~\text{or}~ x_1 = 0 ~\text{and}~ v_1 \ge 0,
\end{cases}
\label{eq:gamma}
\end{equation}
where $(x,v)$ is an element of the tangent bundle $\mathbb{R}^N \times T \mathbb{R}^N$.
The choice of the symbol $R$ when $x_1 = v_1 = 0$
is not important to the results below because in this case $A_L v = A_R v$ by \eqref{eq:ARminusAL}.
We then have the following result (given also in \cite{Si20b}).

%...............................................................................
\begin{lemma}
For any $x \in \mathbb{R}^N$ and $v \in T \mathbb{R}^N$,
\begin{equation}
D^+_v f(x) = A_{\gamma((x,v))} v.
\label{eq:Dplus2}
\end{equation}
\label{le:Dplus}
\end{lemma}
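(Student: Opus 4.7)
The plan is to proceed by a case analysis on the location of $x$ relative to the switching manifold $\Sigma$ and, when $x \in \Sigma$, on the sign of $v_1$. In each case I would evaluate the difference quotient $\frac{f(x+\delta v)-f(x)}{\delta}$ for sufficiently small $\delta > 0$ by identifying which of the two affine branches defines $f(x+\delta v)$ and $f(x)$, and then take $\delta \to 0^+$.

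For $x_1 \neq 0$, the argument is immediate: since $(x+\delta v)_1 = x_1 + \delta v_1$ has the same sign as $x_1$ for all small enough $\delta > 0$, both $f(x+\delta v)$ and $f(x)$ are evaluated using the same branch $A_L$ or $A_R$, so the difference quotient equals $A_L v$ or $A_R v$ identically, matching the definition of $\gamma((x,v))$ in \eqref{eq:gamma}.

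The interesting case is $x_1 = 0$, where $(x+\delta v)_1 = \delta v_1$. If $v_1 < 0$, then $(x+\delta v)_1 < 0$ for $\delta > 0$, so $f(x+\delta v) = A_L(x+\delta v)+b$; since $x \in \Sigma$, continuity lets me also write $f(x) = A_L x + b$, and the quotient collapses to $A_L v$, consistent with $\gamma((x,v)) = L$. Symmetrically, if $v_1 > 0$, I obtain $A_R v$, as required. The only delicate subcase is $x_1 = v_1 = 0$: here $(x+\delta v) \in \Sigma$, so either branch is admissible, but by the continuity relation $A_R - A_L = \zeta e_1^{\sf T}$ from \eqref{eq:ARminusAL} I have $(A_R - A_L)v = \zeta v_1 = 0$, so both branches yield the same vector, and choosing $R$ (as \eqref{eq:gamma} does) gives the correct common value.

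I do not anticipate any real obstacle; the lemma is essentially a bookkeeping statement. The only point that requires care is verifying that the convention $\gamma((x,v)) = R$ at $x_1 = v_1 = 0$ is harmless, which follows from \eqref{eq:ARminusAL}. Once the four subcases are written out, existence of the limit \eqref{eq:Dplus} and the identity \eqref{eq:Dplus2} follow simultaneously.
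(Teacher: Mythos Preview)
Your proposal is correct and follows essentially the same case analysis as the paper's proof: split on the sign of $x_1$, and when $x_1=0$ split on the sign of $v_1$, using continuity on $\Sigma$ to pick the appropriate branch. The only cosmetic difference is that the paper folds the subcase $x_1=v_1=0$ into the case $x_1=0,\,v_1\ge 0$ (since $(x+\delta v)_1\ge 0$ allows use of $f_R$ directly), whereas you single it out and invoke \eqref{eq:ARminusAL} explicitly; both treatments are fine.
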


%...............................................................................
\begin{proof}
If $x_1 > 0$ then $(x + \delta v)_1 > 0$ for sufficiently small values of $\delta > 0$.
In this case $f(x + \delta v) - f(x) = f_R(x + \delta v) - f_R(x) = \delta A_R v$, and so $D^+_v f(x) = A_R v$.
The same calculation occurs in the case $x_1 = 0$ and $v_1 \ge 0$
because we may take $f(x) = f_R(x)$ (by the continuity of $f$ on $\Sigma$)
and $(x + \delta v)_1 \ge 0$ so we may similarly take $f(x + \delta v) = f_R(x + \delta v)$.
The same arguments apply to $f^L$ if $x_1 < 0$, or $x_1 = 0$ and $v_1 < 0$, and result in $D^+_v f(x) = A_L v$.
\end{proof}

Higher directional derivatives are dictated by the evolution of tangent vectors.
For this reason we define the following map on $\mathbb{R}^N \times T \mathbb{R}^N$:
\begin{equation}
h((x,v)) = \left( f(x), D^+_v f(x) \right).
\label{eq:h}
\end{equation}
Since $D^+$ satisfies composition rule
\begin{equation}
D^+_v f^{i+j}(x) = D^+_{D^+_v f^i(x)} f^j \left( f^i(x) \right), \qquad \text{for any $i, j \ge 1$},
\label{eq:DplusComposition}
\end{equation}
the $n^{\rm th}$ iterate of $h$ is
\begin{equation}
h^n((x,v)) = \left( f^n(x), D^+_v f^n(x) \right).
\label{eq:hn}
\end{equation}
Then Lemma \ref{le:Dplus} implies
\begin{equation}
D^+_v f^n(x) = A_{\gamma \left( h^{n-1}((x,v)) \right)} \cdots A_{\gamma \left( h((x,v)) \right)} A_{\gamma((x,v))} v.
\nonumber
\end{equation}
Finally we can use \eqref{eq:MW} to write this as
\begin{equation}
D^+_v f^n(x) = \pM(\cW) v, \qquad \text{where}~ \cW_i = \gamma \left( h^i((x,v)) \right)
~\text{for each}~ i \in \{ 0,\ldots,n-1 \}.
\label{eq:Dplusfn}
\end{equation}
Equation \eqref{eq:Dplusfn} characterises the derivative $D^+_v f^n(x)$
and shows it exists for all $x \in \mathbb{R}^N$, $v \in T \mathbb{R}^N$, and $n \ge 1$.

%-------------------------------------------------------------------------------
\subsection{A lower bound on the Lyapunov exponent}
\label{sub:mainProof}

Here we work towards a proof of Theorem \ref{th:main}.
Note that in Lemma \ref{le:generatesBound} the bound on the Lyapunov exponent does not require that the cone is expanding,
but gives $\lambda_{\rm bound} > 0$ if $c > 1$.

%...............................................................................
\begin{lemma}
The map $h$ is invertible if and only if $f$ is invertible.
\label{le:hInverse}
\end{lemma}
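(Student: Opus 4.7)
The plan is to decompose $h$ via its two components. By Lemma~\ref{le:Dplus},
\[
h((x,v)) = \bigl( f(x),\, T_x(v) \bigr), \qquad T_x(v) := \rD^+_v f(x) = A_{\gamma((x,v))} v,
\]
so invertibility of $h$ will reduce to invertibility of $f$ together with pointwise invertibility of the tangent map $T_x : T\mathbb{R}^N \to T\mathbb{R}^N$ at each $x$.

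For the easy direction, I would exploit the zero-section. Since $T_x(0) = 0$ for every $x$, we have $h((x,0)) = (f(x), 0)$; thus if $h$ is injective the restriction $x \mapsto h((x,0))$ is injective, forcing $f$ to be injective, and if $h$ is surjective then projecting its image onto the first factor shows $f$ is surjective. Hence $h$ invertible implies $f$ invertible.

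Conversely, suppose $f$ is invertible, so by Lemma~\ref{le:fInverse}, $\det(A_L)\det(A_R) > 0$ and both $A_L, A_R$ are nonsingular. Given $(y, w)$ I would set $x = f^{-1}(y)$, which is uniquely determined, and then solve $T_x(v) = w$ for $v$. When $x \notin \Sigma$ the map $T_x$ coincides with the linear isomorphism $A_L$ or $A_R$, so $v$ is unique. When $x \in \Sigma$, the definition \eqref{eq:gamma} together with the continuity identity \eqref{eq:ARminusAL} yields
\[
T_x(v) = \begin{cases} A_L v, & v_1 < 0, \\ A_R v, & v_1 \geq 0, \end{cases}
\]
which is a continuous piecewise-linear map on $T\mathbb{R}^N \cong \mathbb{R}^N$ of the exact form \eqref{eq:f} (with $b = 0$). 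A second application of Lemma~\ref{le:fInverse} to $T_x$ itself then gives invertibility of $T_x$ since $\det(A_L)\det(A_R) > 0$, and so $v$ is again unique. Hence $h$ is invertible.

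The main obstacle is the case $x \in \Sigma$: one must recognise that the tangent-space map at a point of the switching manifold is itself a two-piece continuous piecewise-linear map governed by the same matrices $A_L, A_R$, so that Lemma~\ref{le:fInverse} can be reapplied. Once this observation is in hand the argument is immediate from the results already established.
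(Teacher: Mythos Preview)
Your proposal is correct and follows essentially the same approach as the paper: decompose $h$ into its base and tangent components, observe that invertibility of the first forces invertibility of $f$, and for the converse invert $T_x$ case-by-case, recognising that on $\Sigma$ the map $T_x$ is itself a two-piece continuous piecewise-linear map with $b=0$ to which Lemma~\ref{le:fInverse} applies again. The only cosmetic difference is that for the forward implication the paper invokes Lemma~\ref{le:fInverse} to say ``$f^{-1}$ not well-defined $\Rightarrow$ $h^{-1}$ not well-defined'', whereas you argue directly via the zero-section and projection; both are immediate.
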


%...............................................................................
\begin{proof}
The first component of $h$ is $f$.
If $\det(A_L) \det(A_R) \le 0$ then $f^{-1}$ is not well-defined by Lemma \ref{le:fInverse}
and thus $h^{-1}$ is also not well-defined.

Now suppose $\det(A_L) \det(A_R) > 0$.
By Lemma \ref{le:fInverse} the first component of $h$ is well-defined.
By Lemma \ref{le:Dplus} the second component of $h$ is $h_2(x,v) = A_{\gamma((x,v))} v$.
If $x_1 < 0$ then $h_2(x,v) = A_L v$ is invertible because $\det(A_L) \ne 0$.
Similarly if $x_1 > 0$ then $h_2(x,v) = A_R v$ is invertible because $\det(A_R) \ne 0$.
If $x_1 = 0$ then
\begin{equation}
h_2(x,v) = \begin{cases}
A_L v, & v_1 < 0, \\
A_R v, & v_1 \ge 0, \end{cases}
\nonumber
\end{equation}
which is equal to \eqref{eq:f} with $b = \bO$ (the zero vector) and so is invertible by Lemma \ref{le:fInverse}.
\end{proof}

%...............................................................................
\begin{lemma}
Let $\bW$ be a finite set of words and $\bM$ be given by \eqref{eq:bM}.
Suppose there exists an invariant cone $C$ satisfying \eqref{eq:coneExpanding} with some $c > 0$ for all $M \in \bM$.
Let $x \in \mathbb{R}^N$ and suppose $\bW$ generates every $\cS \in \Gamma(x)$.
Then \eqref{eq:liminf} is satisfied with $\lambda_{\rm bound} = \frac{\ln(c)}{L_{\rm max}}$,
where $L_{\rm max}$ is the length of the longest word(s) in $\bW$.
If $f$ is invertible the same bound applies to $f^i(x)$ for any $i \in \mathbb{Z}$.
\label{le:generatesBound}
\end{lemma}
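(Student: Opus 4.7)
The plan is to pick any nonzero $v \in C$, track the growth of $\|\rD^+_v f^n(x)\|$ along word-boundary times, and then interpolate to a bound valid for all $n$. First I would use equation \eqref{eq:Dplusfn} to realise $\rD^+_v f^n(x) = \pM(\cW) v$ where $\cW_i = \gamma(h^i((x,v)))$. Because $\gamma((y,w)) \in \gamma_{\rm set}(y)$ and the first component of $h^i$ is $f^i(x)$, the finite word $\cW$ extends to some $\cS \in \Gamma(x)$. By hypothesis $\bW$ generates $\cS$, so $\cS = \cW^{(\alpha_0)} \cW^{(\alpha_1)} \cdots$ with $\cW^{(\alpha_j)} \in \bW$; let $L_j = |\cW^{(\alpha_j)}|$ and $n_k = L_0 + \cdots + L_{k-1}$. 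By the way $\pM$ composes under concatenation, $\rD^+_v f^{n_k}(x) = \pM(\cW^{(\alpha_{k-1})}) \cdots \pM(\cW^{(\alpha_0)}) v$.

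Next I would apply the cone hypothesis inductively. Each $\pM(\cW^{(\alpha_j)})$ lies in $\bM$, leaves $C$ invariant, and satisfies $\|M w\| \ge c \|w\|$ for $w \in C$; starting from $v \in C$ this gives $\rD^+_v f^{n_k}(x) \in C$ and $\|\rD^+_v f^{n_k}(x)\| \ge c^k \|v\|$ for every $k$. Since $L_j \le L_{\rm max}$ for all $j$, we have $n_k \le k L_{\rm max}$, so along the subsequence $n_k$ the exponent is already at least $\ln(c)/L_{\rm max}$.

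The third step interpolates to arbitrary $n$. For each $n$, let $k$ be the largest index with $n_k \le n$, so $n < n_{k+1}$ and $n_{k+1} - n \le L_{\rm max}$. Writing
\begin{equation}
\rD^+_v f^{n_{k+1}}(x) = A_{s_{n_{k+1}-1}} \cdots A_{s_n} \rD^+_v f^n(x)
\nonumber
\end{equation}
and bounding each factor by $K := \max(\|A_L\|,\|A_R\|,1)$ gives
\begin{equation}
\|\rD^+_v f^n(x)\| \ge K^{-L_{\rm max}} \|\rD^+_v f^{n_{k+1}}(x)\| \ge K^{-L_{\rm max}} c^{k+1} \|v\|.
\nonumber
\end{equation}
Taking logarithms, dividing by $n$, and using $k+1 > n/L_{\rm max}$ (from $n < n_{k+1} \le (k+1) L_{\rm max}$), the two terms $-L_{\rm max} \ln(K)/n$ and $\ln\|v\|/n$ vanish in the limit, leaving $\liminf \frac{1}{n} \ln \|\rD^+_v f^n(x)\| \ge \ln(c)/L_{\rm max}$.

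For the invertibility addendum I would use the composition rule \eqref{eq:DplusComposition}. For $i \ge 0$, set $v' = \rD^+_v f^i(x)$; then $\|\rD^+_{v'} f^n(f^i(x))\| = \|\rD^+_v f^{n+i}(x)\|$ and the $\liminf$ transfers since $n/(n+i) \to 1$. For $i < 0$, invertibility of $f$ yields invertibility of $h$ by Lemma \ref{le:hInverse}, so there exists $v'$ with $h^{-i}((f^i(x),v')) = (x,v)$; the same identity then transfers the bound. The main obstacle is the interpolation step: because the individual pieces $A_L, A_R$ are not elements of $\bM$, the cone property says nothing about them directly and they could in principle shrink a vector of $C$. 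The trick of overshooting to $n_{k+1}$ (where the expansion is guaranteed) and peeling the extra factors off via their operator-norm \emph{upper} bound sidesteps this and, pleasingly, avoids assuming $A_L$ or $A_R$ is invertible.
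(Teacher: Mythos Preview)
Your argument follows the paper's line almost exactly: pick $v\in C$, decompose the symbol sequence $\cS$ of $(x,v)$ into words of $\bW$, and use cone invariance to get $\|\rD^+_v f^{n_k}(x)\|\ge c^k\|v\|$ along the word-boundary times $n_k$. The one substantive difference is your interpolation step. The paper simply writes
\[
\liminf_{n\to\infty}\tfrac{1}{n}\ln\|\rD^+_v f^n(x)\|
=\liminf_{j\to\infty}\tfrac{1}{n_j}\ln\|v_j\|,
\]
replacing the full $\liminf$ by the $\liminf$ along the subsequence $\{n_j\}$ without comment; in general a subsequential $\liminf$ only bounds the full one from above, so this equality needs work. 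Your overshoot-to-$n_{k+1}$ trick, peeling off at most $L_{\rm max}$ factors via the operator-norm bound $K$, supplies exactly the missing control and is a genuine improvement.

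One caveat worth flagging: the step ``$k+1>n/L_{\rm max}$ hence $(k+1)\ln c/n\ge \ln c/L_{\rm max}$'' uses $\ln c\ge 0$. For $0<c<1$ the inequality reverses and your bound degrades (indeed the stated conclusion $\liminf\ge \ln(c)/L_{\rm max}$ can fail for $c<1$, as a short-word/long-word example shows). The paper's proof has the same issue, and since Theorem~\ref{th:main} only invokes the lemma with $c>1$ this does not affect the application; but your write-up should say explicitly that the interpolation argument is for $c\ge 1$.
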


%...............................................................................
\begin{proof}
Let $\cW^{(1)},\ldots,\cW^{(k)}$ be the words in $\bW$ and let $L_1,\ldots,L_k$ be their lengths.
Let $v \in C$ with $v \ne \bO$.
Define the sequence
\begin{equation}
\cS = \gamma((x,v)) \gamma \big( h((x,v)) \big) \gamma \left( h^2((x,v)) \right) \cdots.
\nonumber
\end{equation}
Then $\cS \in \Gamma(x)$ thus there exists a sequence $\alpha_j$ such that
\begin{equation}
\cS = \cW^{(\alpha_0)} \cW^{(\alpha_1)} \cW^{(\alpha_2)} \cdots.
\nonumber
\end{equation}
For each $j \ge 1$, let $n_j = L_{\alpha_0} + L_{\alpha_1} + \cdots + L_{\alpha_{j-1}}$ and let
\begin{equation}
v_j = \rD^+_v f^{n_j}(x).
\label{eq:vj}
\end{equation}
By \eqref{eq:Dplusfn} we have
$v_j = \pM \left( \cW^{(\alpha_0)} \cW^{(\alpha_1)} \cdots \cW^{(\alpha_{j-1})} \right) v$.
By \eqref{eq:MW} we have
\begin{equation}
\pM \left( \cW^{(\alpha_0)} \cW^{(\alpha_1)} \cdots \cW^{(\alpha_{j-1})} \right)
= \pM \left( \cW^{(\alpha_{j-1})} \right) \pM \left( \cW^{(\alpha_0)} \cW^{(\alpha_1)} \cdots \cW^{(\alpha_{j-2})} \right)
\nonumber
\end{equation}
and thus, for all $j \ge 1$,
\begin{equation}
v_j = \pM \left( \cW^{(\alpha_{j-1})} \right) v_{j-1} \,,
\label{eq:uj2}
\end{equation}
where $v_0 = v$.
Since $v_0 \in C$ and $C$ is invariant under each $\pM \left( \cW^{(\alpha_i)} \right)$,
we have $v_j \in C$ for all $j \ge 1$.
Moreover $\| v_j \| \ge c \| v_{j-1} \|$ and so $\| v_j \| \ge c^j \| v_0 \|$.
Then by \eqref{eq:vj}
\begin{align*}
\liminf_{n \to \infty} \frac{1}{n} \ln \left( \left\| \rD^+_v f^n(x) \right\| \right)
&= \liminf_{j \to \infty} \frac{1}{n_j} \ln \left( \| v_j \| \right) \\
&\ge \liminf_{j \to \infty} \frac{1}{j L_{\rm max}} \ln \left( c^j \| v \| \right) \\
&= \lambda_{\rm bound} \,.
\end{align*}

Finally suppose $f$ is invertible and choose any $i \in \mathbb{Z}$.
Write $(\tilde{x},\tilde{v}) = h^i(x,v)$.
In the case $i < 0$ this is well-defined by Lemma \ref{le:hInverse}.
By the composition rule \eqref{eq:DplusComposition} we have
\begin{equation}
\rD^+_{\tilde{v}} f^n(\tilde{x}) = \rD^+_v f^{n+i}(x),
\nonumber
\end{equation}
for any $n \ge 0$ for which $n+i \ge 0$.
Thus by taking $n \to \infty$ we obtain at the same bound for $\tilde{x} = f^i(x)$.
\end{proof}

%...............................................................................
\begin{proof}[Proof of Theorem \ref{th:main}]
Choose any $x \in \Omega_{\rm rec}$.
By Lemma \ref{le:generates}, $\bW$ generates every $\cS \in \Gamma(x)$.
Thus by Lemma \ref{le:generatesBound}, inequality \eqref{eq:liminf} holds for some $\lambda_{\rm bound} > 0$.
For the second part of the theorem,
an attractor $\Lambda \subset \Omega_{\rm trap}$ exists because $\Omega_{\rm trap}$ is a trapping region.
Choose any $y \in \Lambda$.
We can write $y = f^i(x)$ for some $i \in \mathbb{R}$,
thus \eqref{eq:liminf} holds for $y$ by the second part of Lemma \ref{le:generatesBound}.
\end{proof}

%===============================================================================
\section{Cones for sets of $2 \times 2$ matrices}
\label{sec:iec}
\setcounter{equation}{0}

For the remainder of the paper we apply the above methodology to maps on $\mathbb{R}^2$
with the Euclidean norm $\| \cdot \|$.

Let $M$ be a real-valued $2 \times 2$ matrix.
We are interested in the behaviour of $v \mapsto M v$,
where $v \in \mathbb{R}^2$, in regards to cones.
We start in \S\ref{sub:oneMatrix} by deriving properties of this map for vectors of the form
$v = (1,m)$, where $m \in \mathbb{R}$ is the slope of $v$.
The behaviour of scalar multiples of $(1,m)$ follows trivially by linearity.
The behaviour of $e_2 = (0,1)$ can be inferred by considering the limit $m \to \infty$.
However, this vector will not be of interest to us because if $\bW$ is given by \eqref{eq:bW}, then it contains $R$.
Notice $\| A_R e_2 \| = \| e_2 \|$ so
$e_2$ cannot belong to an invariant expanding cone for $\bM$ given by \eqref{eq:bM}.

In \S\ref{sub:manyMatrices} we consider several matrices $M^{(i)}$.
We use fixed points of $v \mapsto M^{(i)} v$ to construct a cone
and derive conditions sufficient for the cone to be invariant and expanding.

%-------------------------------------------------------------------------------
\subsection{Results for a single matrix $M$}
\label{sub:oneMatrix}

Write $v = (1,m)$ and
\begin{equation}
M = \begin{bmatrix} a & b \\ c & d \end{bmatrix}.
\label{eq:M}
\end{equation}
We first decompose $v \mapsto M v$ into two real-valued functions $G$ and $H$.
Let 
\begin{equation}
H(m) = \| M v \|^2 - \| v \|^2.
\label{eq:H2}
\end{equation}
This function is particularly amenable to analysis because it is quadratic in $m$:
\begin{equation}
H(m) = \left( b^2 + d^2 - 1 \right) m^2 + 2 (a b + c d) m + a^2 + c^2 - 1.
\label{eq:H}
\end{equation}
The factor $\frac{\| M v \|}{\| v \|}$ by which the norm of $v$ changes under multiplication by $M$
is less than $1$ if $H(m) < 0$, and greater than $1$ if $H(m) > 0$.

The slope of $M v$ is
\begin{equation}
G(m) = \frac{c + d m}{a + b m},
\label{eq:G}
\end{equation}
assuming $a + b m \ne 0$.
That is, $M v$ is a scalar multiple of $(1,G(m))$.
From \eqref{eq:G},
\begin{equation}
\frac{d G}{d m} = \frac{\det(M)}{(a + b m)^2},
\label{eq:dGdm}
\end{equation}
and so if $\det(M) > 0$
(which is the case below where $M$ is a product instances of $A_L$ and $A_R$ with $\det(A_L) > 0$ and $\det(A_R) > 0$)
then $G$ is increasing on any interval for which $a + b m \ne 0$.
Fixed points of $G$ satisfy
\begin{equation}
b m^2 + (a - d) m - c = 0.
\label{eq:GfpEqn}
\end{equation}
Note that $m \in \mathbb{R}$ is a fixed point of $G$ if and only if
$v = (1,m)$ is an eigenvector of $M$.

%...............................................................................
\begin{lemma}	
Suppose
\begin{equation}
0 < \det(M) < \tfrac{1}{4} \,{\rm trace}(M)^2 ~\text{and}~ b \ne 0.
\label{eq:Massumptions}
\end{equation}
Then $G$ has exactly two fixed points.
At one fixed point, call it $m_{\rm stab}$, we have $\frac{d G}{d m} = \eta$, for some $0 < \eta < 1$,
and at the other fixed point, call it $m_{\rm unstab}$, we have $\frac{d G}{d m} = \frac{1}{\eta}$.
Moreover, $m_{\rm unstab}$ lies between $m_{\rm stab}$ and $m_{\text{\em blow-up}} = -\frac{a}{b}$
(see for example Fig.~\ref{fig:RLpSchem_e}).
\label{le:Gfps}
\end{lemma}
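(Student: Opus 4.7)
The plan is to read off all the qualitative information from the eigenstructure of $M$: the fixed points of $G$ correspond to the directions of its eigenvectors, and the derivative of $G$ at such a fixed point is a ratio of eigenvalues. The assumption $\frac{1}{4}\mathrm{trace}(M)^2 > \det(M)$ is precisely the condition that the discriminant $\mathrm{trace}(M)^2 - 4\det(M)$ of the characteristic polynomial be positive, so $M$ has two distinct real eigenvalues $\lambda_1,\lambda_2$ with $\lambda_1\lambda_2 = \det(M) > 0$ (hence the eigenvalues are nonzero and of the same sign) and $|\lambda_1| \ne |\lambda_2|$. Since $b \ne 0$, the equation \eqref{eq:GfpEqn} is a genuine quadratic, and its discriminant expands to the same quantity $\mathrm{trace}(M)^2 - 4\det(M) > 0$, giving exactly two real roots $m_1,m_2$.

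Next I would exploit the eigenvector relation: if $m^*$ is a fixed point of $G$ then $(1,m^*)$ is an eigenvector of $M$ with eigenvalue $\lambda = a + bm^*$. Substituting $m = m^*$ in \eqref{eq:dGdm} gives $\frac{dG}{dm}(m^*) = \det(M)/\lambda^2 = \lambda_{\text{other}}/\lambda$. Applied to the two fixed points this yields derivatives $\lambda_2/\lambda_1$ and $\lambda_1/\lambda_2$, both positive (same sign eigenvalues) and reciprocals of one another. Labelling the one of modulus less than $1$ as $\eta$ gives $0 < \eta < 1$ at $m_{\mathrm{stab}}$ and $\eta^{-1} > 1$ at $m_{\mathrm{unstab}}$.

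For the geometric claim about $m_{\text{blow-up}} = -a/b$, I would first show that both fixed points lie on the same branch of $G$. By Vieta's formulas applied to \eqref{eq:GfpEqn},
\begin{equation}
\bigl(m_1 + \tfrac{a}{b}\bigr)\bigl(m_2 + \tfrac{a}{b}\bigr) = m_1 m_2 + \tfrac{a}{b}(m_1+m_2) + \tfrac{a^2}{b^2} = -\tfrac{c}{b} + \tfrac{a(d-a)}{b^2} + \tfrac{a^2}{b^2} = \tfrac{\det(M)}{b^2} > 0,
\nonumber
\end{equation}
so $m_{\mathrm{stab}} + a/b$ and $m_{\mathrm{unstab}} + a/b$ have the same sign, and both fixed points lie on one side of the asymptote. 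To pin down the ordering, I would use that $G$ is strictly increasing on that branch (by \eqref{eq:dGdm} and $\det(M) > 0$) together with the horizontal asymptote $G(m) \to d/b$ as $|m| \to \infty$: if $m_{\mathrm{unstab}}$ lay on the side of $m_{\mathrm{stab}}$ opposite the vertical asymptote, then just past $m_{\mathrm{unstab}}$ we would have $G(m) > m$, and this inequality would have to persist to $\pm\infty$, contradicting the bounded limit $d/b$. Hence $m_{\mathrm{unstab}}$ is sandwiched between $m_{\mathrm{stab}}$ and $-a/b$ on the common branch, as claimed.

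The main obstacle, minor as it is, is packaging the last step cleanly: one must rule out the wrong ordering of $m_{\mathrm{stab}}$ and $m_{\mathrm{unstab}}$ without splitting into cases on the sign of $b$. The argument via the horizontal asymptote $d/b$ of $G$ avoids such case splits, since it applies uniformly on whichever branch contains the two fixed points.
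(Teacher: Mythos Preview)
Your argument is correct and, through the identification $\eta = \lambda_2/\lambda_1$, matches the paper's proof for the first two claims. For the ordering of $m_{\rm stab}$, $m_{\rm unstab}$, and $m_{\text{blow-up}}$ you take a different route. The paper simply observes that $G'(m_{\rm stab}) < 1$ while $G(m)$ diverges as $m$ approaches $m_{\text{blow-up}}$ along the branch containing $m_{\rm stab}$, so the intermediate value theorem forces a second fixed point strictly between them, which must be $m_{\rm unstab}$. Your route --- Vieta's formulas to place both fixed points on one branch, then the horizontal asymptote $d/b$ to rule out the wrong ordering --- is also valid, though note that when the common branch lies to the left of $-a/b$ the inequality persisting beyond $m_{\rm unstab}$ is $G(m) < m$ rather than $G(m) > m$; the contradiction with $G(m) \to d/b$ as $m \to -\infty$ goes through just the same, so despite your stated aim a silent two-case split is still present. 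The paper's IVT argument is slightly more direct, needing only the vertical asymptote and no Vieta computation; yours has the merit of making explicit that the two fixed points never straddle the pole, a fact the paper leaves implicit.
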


%%%%%%%%%%%%%%%%%%%%%%%%%%%%%%%%%%%%%%%%%%%%%%%%%%%%%%%%%%%%%
\begin{figure}[b!]
\begin{center}
\includegraphics[height=4.5cm]{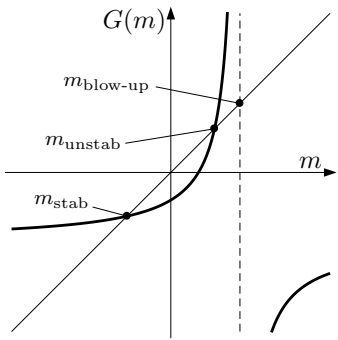}
\caption{
A sketch of the slope map \eqref{eq:G} when the stable fixed point $m_{\rm stab}$
has a smaller value than the unstable fixed point $m_{\rm unstab}$.
\label{fig:RLpSchem_e}
}
\end{center}
\end{figure}
%%%%%%%%%%%%%%%%%%%%%%%%%%%%%%%%%%%%%%%%%%%%%%%%%%%%%%%%%%%%%

%...............................................................................
\begin{proof}
By \eqref{eq:Massumptions} $M$ has distinct eigenvalues
$\lambda_1, \lambda_2 \in \mathbb{R}$ with $\lambda_1 \lambda_2 = \det(M) > 0$.
Without loss of generality assume $|\lambda_1| > |\lambda_2|$.
It is a simple exercise to show that $m_{\rm stab} = \frac{\lambda_1 - a}{b}$ and $m_{\rm unstab} = \frac{\lambda_2 - a}{b}$
satisfy the fixed point equation \eqref{eq:GfpEqn}.
These are only fixed points of $G(m)$ because \eqref{eq:GfpEqn} is quadratic.
By evaluating \eqref{eq:dGdm} at $m = m_{\rm stab}$ we obtain $\frac{d G}{d m} = \frac{\det(M)}{\lambda_1^2} = \frac{\lambda_2}{\lambda_1}$.
So $\eta = \frac{\lambda_2}{\lambda_1}$ and indeed $\eta \in (0,1)$.
Similarly at $m = m_2$ we have $\frac{d G}{d m} = \frac{\lambda_1}{\lambda_2} = \frac{1}{\eta}$.

Now suppose $m_{\rm stab} < m_{\text{blow-up}}$.
By the intermediate value theorem, $G(m)$ has a fixed point between $m_{\rm stab}$ and $m_{\text{blow-up}}$
because $\frac{d G}{d m} < 1$ at $m_{\rm stab}$ whereas $G(m) \to \infty$ as $m$ converges to $m_{\text{blow-up}}$ from the right.
This fixed point must be $m_{\rm unstab}$, thus we have $m_{\rm stab} < m_{\rm unstab} < m_{\text{blow-up}}$.
If instead $m_{\rm stab} > m_{\text{blow-up}}$,
an analogous argument produces $m_{\text{blow-up}} < m_{\rm unstab} < m_{\rm stab}$.
\end{proof}

%-------------------------------------------------------------------------------
\subsection{Results for a set of matrices $\bM$}
\label{sub:manyMatrices}

Let $\bM = \left\{ M^{(1)}, \ldots, M^{(k)} \right\}$ be a set of real-valued $2 \times 2$ matrices.
Write
\begin{equation}
M^{(j)} = \begin{bmatrix} a_j & b_j \\ c_j & d_j \end{bmatrix},
\label{eq:Mj}
\end{equation}
for each $j \in \{ 1,\ldots,k \}$, and let
\begin{align}
G_j(m) &= \frac{c_j + d_j m}{a_j + b_j m}, \label{eq:Gj} \\
H_j(m) &= \left( b_j^2 + d_j^2 - 1 \right) m^2 + 2 (a_j b_j + c_j d_j) m + a_j^2 + c_j^2 - 1, \label{eq:Hj}
\end{align}
denote \eqref{eq:H} and \eqref{eq:G} applied to \eqref{eq:Mj}.
Assume
\begin{equation}
0 < \det \left( M^{(j)} \right) < \tfrac{1}{4} \,{\rm trace} \left( M^{(j)} \right)^2 ~\text{and}~ b_j \ne 0,
\label{eq:Mjassumptions}
\end{equation}
for each $j$ so that each $M^{(j)}$ satisfies the assumptions of Lemma \ref{le:Gfps}.
Let $m^{(j)}_{\rm stab}$ and $m^{(j)}_{\rm unstab}$ denote the stable and unstable fixed points of \eqref{eq:Gj}
and let $m_{\rm stab,min}$ and $m_{\rm stab,max}$ denote the minimum and maximum values of
$\left\{ m^{(1)}_{\rm stab}, \ldots, m^{(k)}_{\rm stab} \right\}$.
Define the interval $J = \left[ m_{\rm stab,min}, m_{\rm stab,max} \right]$, see Fig.~\ref{fig:RLpSchem_f},
and the cone
\begin{equation}
C_J = \left\{ t \,(1,m) \,\big|\, t \in \mathbb{R}, m \in J \right\}.
\label{eq:CJ}
\end{equation}

%%%%%%%%%%%%%%%%%%%%%%%%%%%%%%%%%%%%%%%%%%%%%%%%%%%%%%%%%%%%%
\begin{figure}[b!]
\begin{center}
\includegraphics[height=4.5cm]{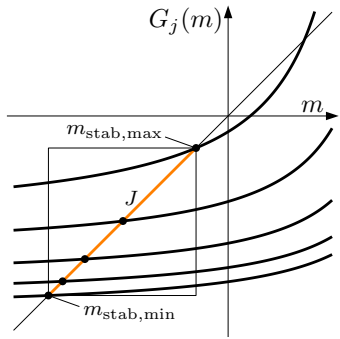}
\caption{
A sketch of five slope maps \eqref{eq:Gj}.
Here condition \eqref{eq:CIassumption1} is satisfied,
thus $G_j(J) \subset J$, for each $j$,
and so the cone $C_J$ \eqref{eq:CJ} is invariant under $\bM$, by Proposition \ref{pr:CJ}.
\label{fig:RLpSchem_f}
}
\end{center}
\end{figure}
%%%%%%%%%%%%%%%%%%%%%%%%%%%%%%%%%%%%%%%%%%%%%%%%%%%%%%%%%%%%%

%...............................................................................
\begin{proposition}
If
\begin{equation}
m^{(j)}_{\rm unstab} \notin J, \qquad \text{for each}~ j \in \{ 1,\ldots,k \},
\label{eq:CIassumption1}
\end{equation}
then $C_J$ is invariant under $\bM$.
If also
\begin{equation}
H_j(m) > 0, \qquad \text{for each}~ j \in \{ 1,\ldots,k \} ~\text{and all}~ m \in J,
\label{eq:CIassumption2}
\end{equation}
then $C_J$ is expanding under $\bM$.
\label{pr:CJ}
\end{proposition}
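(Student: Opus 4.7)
The plan is to treat the two conclusions separately but in parallel: the invariance of $C_J$ will follow from analysing each slope map $G_j$ on the interval $J$, while the expansion of $C_J$ will come from a compactness/continuity argument applied to each $H_j$. Throughout, I work with representatives $v = t(1,m)$ of elements of $C_J$; the action of $M^{(j)}$ on such a vector factors as $M^{(j)}v = t(a_j + b_j m)\,(1, G_j(m))$ whenever $a_j + b_j m \neq 0$, so both questions reduce to understanding $G_j$ and $H_j$ on $J$.

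For invariance, the first small step is to check that $G_j$ is actually defined on $J$. By Lemma \ref{le:Gfps}, the blow-up slope $-a_j/b_j$ lies on the far side of $m^{(j)}_{\rm unstab}$ from $m^{(j)}_{\rm stab}$; since $m^{(j)}_{\rm stab} \in J$, $m^{(j)}_{\rm unstab} \notin J$ by hypothesis, and $J$ is an interval, $J$ lies entirely on the $m^{(j)}_{\rm stab}$-side of $m^{(j)}_{\rm unstab}$, so it avoids the blow-up point. The key step is then to show $G_j(J) \subseteq J$. Because $\det M^{(j)} > 0$, $G_j$ is strictly increasing on the branch of continuity containing $J$, and its unique fixed point there is $m^{(j)}_{\rm stab}$ with multiplier $\eta \in (0,1)$. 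A short linearisation argument (the same one used inside the proof of Lemma \ref{le:Gfps}) shows $G_j(m)$ always lies between $m$ and $m^{(j)}_{\rm stab}$ for $m$ on this branch; since $m^{(j)}_{\rm stab} \in [m_{\rm stab,min}, m_{\rm stab,max}] = J$, monotonicity of $G_j$ then forces the image of both endpoints of $J$ to stay inside $J$, and hence $G_j(J) \subseteq J$. This gives $M^{(j)} v \in C_J$ for each $v \in C_J$ and each $j$.

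For expansion, assume additionally \eqref{eq:CIassumption2}. For any $v = t(1,m) \in C_J$, the definition \eqref{eq:H2} of $H_j$ yields
\begin{equation}
\frac{\|M^{(j)} v\|^2}{\|v\|^2} = 1 + \frac{H_j(m)}{1 + m^2}.
\nonumber
\end{equation}
The function $m \mapsto H_j(m)/(1+m^2)$ is continuous and strictly positive on the compact interval $J$, hence attains a positive minimum $\rho_j$. Setting $c = \min_{j} \sqrt{1 + \rho_j} > 1$ gives $\|M^{(j)}v\| \ge c\|v\|$ uniformly over $v \in C_J$ and $j \in \{1,\dots,k\}$, which is \eqref{eq:coneExpanding}.

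The only genuinely delicate step is the invariance claim, and specifically the observation that $G_j$ "contracts towards $m^{(j)}_{\rm stab}$" on its side of $m^{(j)}_{\rm unstab}$. I would spell this out by cases on whether $m^{(j)}_{\rm stab} < m^{(j)}_{\rm unstab}$ or $m^{(j)}_{\rm stab} > m^{(j)}_{\rm unstab}$ (the two cases being symmetric), using the signs of $G_j'(m^{(j)}_{\rm stab}) - 1$ and $G_j'(m^{(j)}_{\rm unstab}) - 1$ together with the monotonicity of $G_j$ on the branch to pin down the sign of $G_j(m) - m$. The expansion half is then essentially free once $J$ is recognised as compact.
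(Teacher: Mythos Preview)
Your argument is essentially the paper's: show each $G_j$ is well-defined and increasing on $J$, use the attracting nature of $m^{(j)}_{\rm stab}$ to trap $G_j(J)$ in $J$, then get expansion from compactness of $J$ and positivity of $H_j$. One slip to fix: the phrase ``its unique fixed point there is $m^{(j)}_{\rm stab}$'' is false as stated, because by Lemma~\ref{le:Gfps} the point $m^{(j)}_{\rm unstab}$ lies \emph{between} $m^{(j)}_{\rm stab}$ and the blow-up, so both fixed points sit on the same branch of continuity of $G_j$. Consequently the claim ``$G_j(m)$ lies between $m$ and $m^{(j)}_{\rm stab}$ for $m$ on this branch'' is also too strong --- it fails once $m$ passes $m^{(j)}_{\rm unstab}$. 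What saves you (and what the paper uses) is precisely the hypothesis $m^{(j)}_{\rm unstab} \notin J$: since $J$ is an interval containing $m^{(j)}_{\rm stab}$, all of $J$ lies on the $m^{(j)}_{\rm stab}$-side of $m^{(j)}_{\rm unstab}$, and on that sub-interval $m^{(j)}_{\rm stab}$ \emph{is} the only fixed point, so your sign analysis of $G_j(m)-m$ (or equivalently the paper's intermediate-value argument) goes through. Restrict the claim to $m \in J$ and the proof is complete.
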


%...............................................................................
\begin{proof}
Choose any $j \in \{ 1,\ldots,k \}$.
Let $m^{(j)}_{\text{blow-up}}$ be the value of $m$ at which \eqref{eq:Gj} is undefined.
By assumption $m^{(j)}_{\rm unstab} \notin J$,
so by Lemma \ref{le:Gfps} if $m^{(j)}_{\rm unstab} < m_{\rm stab,min}$,
then $m^{(j)}_{\text{blow-up}} < m^{(j)}_{\rm unstab}$,
while if $m^{(j)}_{\rm unstab} > m_{\rm stab,max}$,
then $m^{(j)}_{\text{blow-up}} > m^{(j)}_{\rm unstab}$.
In either case $m^{(j)}_{\text{blow-up}} \notin J$, thus $G_j \big|_J$
(the restriction of $G_j$ to $J$) is continuous.

We now show that
\begin{equation}
\min_{m \in J} G_j(m) \ge m_{\rm stab,min} \,.
\label{eq:Gjmin}
\end{equation}
By \eqref{eq:dGdm} $G_j \big|_J$ is increasing because $\det \left( M^{(j)} \right) > 0$.
Thus $G_j \big|_J$ achieves its minimum at $m = m_{\rm stab,min}$.
If $m^{(j)}_{\rm stab} \ne m_{\rm stab,min}$ (otherwise \eqref{eq:Gjmin} is trivial)
then, since $\frac{d G_j}{d m} \big|_{m = m^{(j)}_{\rm stab}} < 1$ by Lemma \ref{le:Gfps},
we have $G_j(m) > m$ for some values of $m < m^{(j)}_{\rm stab}$ close to $m^{(j)}_{\rm stab}$.
Thus $G_j \left( m_{\rm stab,min} \right) > m_{\rm stab,min}$, for otherwise
$G_j$ would have another fixed point in $J$ by the immediate value theorem
and this is not possible because $m^{(j)}_{\rm stab}$ is unique fixed point of $G_j \big|_J$.
This verifies \eqref{eq:Gjmin}.
We similarly have $\max_{m \in J} G_j(m) \le m_{\rm stab,max}$.
Thus $G_j(m) \in J$ for all $m \in J$.
Thus $M^{(j)} u \in C_J$ for all $u \in C_J$.
Since $j$ is arbitrary, $C_J$ is forward invariant under $\bM$.

Finally, with $v = (1,m)$ we have
\begin{equation}
\min_{u \in C_J \setminus \{ \bO \}} \frac{\left\| M^{(j)} u \right\|}{\| u \|}
= \min_{m \in J} \frac{\left\| M^{(j)} v \right\|}{\| v \|}
= \min_{m \in J} \sqrt{\frac{H_j(m)}{\| v \|^2} + 1}
= c_j \,,
\nonumber
\end{equation}
where the minimum value $c_j$ is achieved because $J$ is compact.
If \eqref{eq:CIassumption2} is satisfied then $c_j > 1$.
Thus with ${\displaystyle c = \min_j c_j}$, $C_J$ is expanding under $\bM$.
\end{proof}

We complete this section by showing how the expansion condition \eqref{eq:CIassumption2}
can be checked with a finite set of calculations
based on the fact that each $H_j$ is quadratic in $m$.
If $H_j$ has two distinct real roots, then $H_j$ is increasing at one root, call it $m^{(j)}_{\rm incr}$,
and decreasing at the other root, call it $m^{(j)}_{\rm decr}$.

%...............................................................................
\begin{proposition}
Suppose $H_j$ has two distinct real roots.
Then
\begin{equation}
H_j(m) > 0, \qquad \text{for all}~ m \in J,
\label{eq:HjPositiveOnJ}
\end{equation}
if and only if two of the following inequalities are satisfied
\begin{align}
m_{\rm stab,max} &< m^{(j)}_{\rm decr} \,, \label{eq:inequality1} \\
m^{(j)}_{\rm decr} &< m^{(j)}_{\rm incr} \,, \label{eq:inequality2} \\
m^{(j)}_{\rm incr} &< m_{\rm stab,min} \,. \label{eq:inequality3}
\end{align}
\label{pr:threeInequalities}
\end{proposition}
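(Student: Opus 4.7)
The plan is to reduce everything to a case analysis on the sign of the leading coefficient $b_j^2 + d_j^2 - 1$ of the quadratic $H_j$. Because $H_j$ is assumed to have two distinct real roots, this coefficient is nonzero, and its sign controls two things simultaneously: the ordering of $m^{(j)}_{\rm decr}$ versus $m^{(j)}_{\rm incr}$, and whether the positivity set of $H_j$ is the exterior or the interior of the closed interval formed by its two roots.

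If the leading coefficient is positive, so the parabola opens upward, then $H_j$ is decreasing at its smaller root and increasing at its larger root. Hence $m^{(j)}_{\rm decr} < m^{(j)}_{\rm incr}$ and inequality \eqref{eq:inequality2} holds automatically. Moreover $H_j > 0$ precisely off the closed interval $[m^{(j)}_{\rm decr}, m^{(j)}_{\rm incr}]$, so the condition $H_j(m) > 0$ throughout $J = [m_{\rm stab,min}, m_{\rm stab,max}]$ is equivalent to $J$ lying strictly left of both roots (giving \eqref{eq:inequality1}) or strictly right of both roots (giving \eqref{eq:inequality3}). In either sub-case, together with the automatic \eqref{eq:inequality2}, exactly two of the three inequalities hold.

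If the leading coefficient is negative, so the parabola opens downward, the ordering of the roots flips; now $m^{(j)}_{\rm incr} < m^{(j)}_{\rm decr}$, so \eqref{eq:inequality2} fails. Here $H_j > 0$ only on the open interval between the roots, so $H_j(m) > 0$ throughout $J$ is equivalent to the sandwich pair \eqref{eq:inequality1} and \eqref{eq:inequality3}. Again exactly two of the three inequalities hold.

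Putting the two cases together yields both directions of the equivalence: each geometric configuration of $J$ relative to the roots (strictly left, strictly right, or wedged between) picks out exactly one of the three admissible pairs among \eqref{eq:inequality1}--\eqref{eq:inequality3}, and conversely each admissible pair forces a sign for $b_j^2 + d_j^2 - 1$ and a placement of $J$ that makes $H_j$ positive throughout $J$. The only real bookkeeping is checking that the three pairs are mutually exclusive, which follows immediately from $J$ being a nonempty interval. I do not anticipate any obstacle beyond keeping the ordering straight in each of the two sign cases.
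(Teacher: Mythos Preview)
Your proposal is correct and takes a genuinely different organizational route from the paper. The paper's proof is a brute-force verification of all six cases: for each of the three pairs of inequalities assumed true it checks \eqref{eq:HjPositiveOnJ} directly, and for each of the three pairs assumed false it exhibits a point of $J$ where $H_j \le 0$. Your approach instead splits on the sign of the leading coefficient $b_j^2 + d_j^2 - 1$, observing that this sign simultaneously fixes the ordering of $m^{(j)}_{\rm decr}$ and $m^{(j)}_{\rm incr}$ (hence the truth value of \eqref{eq:inequality2}) and whether the positivity set of $H_j$ is the exterior or the interior of the interval between the roots. This is more conceptual: it explains \emph{why} the three admissible pairs correspond exactly to the three possible placements of $J$ relative to the roots (strictly left, strictly right, or sandwiched between), rather than verifying each case in isolation. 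The paper's version is slightly more self-contained in that it never names the leading coefficient, but yours makes the structure of the equivalence transparent and avoids the somewhat artificial six-way split.
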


Notice \eqref{eq:inequality1}--\eqref{eq:inequality3} cannot all be satisfied
because $m_{\rm stab,min} < m_{\rm stab,max}$.

%...............................................................................
\begin{proof}
The proof is achieved by brute-force; there are six cases to consider.

If \eqref{eq:inequality1} and \eqref{eq:inequality2} are true,
then $H_j(m) > 0$ for all $m < m^{(j)}_{\rm decr}$, which includes all $m \in J$,
thus \eqref{eq:HjPositiveOnJ} is true.
If \eqref{eq:inequality1} and \eqref{eq:inequality3} are true,
then $H_j(m) > 0$ for all $m \in \left( m^{(j)}_{\rm incr}, m^{(j)}_{\rm decr} \right)$ so \eqref{eq:HjPositiveOnJ} is true.
If \eqref{eq:inequality2} and \eqref{eq:inequality3} are true,
then $H_j(m) > 0$ for all $m > m^{(j)}_{\rm incr}$ so \eqref{eq:HjPositiveOnJ} is true.

If \eqref{eq:inequality1} and \eqref{eq:inequality2} are false,
then $H_j(m) < 0$ at $m = m_{\rm stab,max}$ so \eqref{eq:HjPositiveOnJ} is false.
If \eqref{eq:inequality1} and \eqref{eq:inequality3} are false
then $H_j(m) \le 0$ at $m = \min \!\big[ m^{(j)}_{\rm incr}, m_{\rm stab,max} \big]$, which belongs to $J$,
thus \eqref{eq:HjPositiveOnJ} is false.
If \eqref{eq:inequality2} and \eqref{eq:inequality3} are false,
then $H_j(m) < 0$ at $m = m_{\rm stab,min}$ so \eqref{eq:HjPositiveOnJ} is false.
\end{proof}

%...............................................................................
\begin{remark}
In the special case that $b_j^2 + d_j^2 - 1 = 0$ and $a_j b_j + c_j d_j \ne 0$,
the function $H_j$ has exactly one root:
\begin{equation}
m^{(j)}_{\rm root} = -\frac{a_j^2 + c_j^2 - 1}{2 (a_j b_j + c_j d_j)}.
\label{eq:mRoot}
\end{equation}
If $a_j b_j + c_j d_j < 0$ [resp.~$a_j b_j + c_j d_j > 0$]
then $H_j(m) > 0$ for all $m \in J$ if and only if $m_{\rm root} > m_{\rm stab,max}$
[resp.~$m_{\rm root} < m_{\rm stab,min}$].
This case arises in the implementation below because with $M^{(1)} = A_R$ we have $b_1 = 1$ and $d_1 = 0$.
\label{re:oneRoot}
\end{remark}

%===============================================================================
\section{The construction of a trapping region}
\label{sec:fir}
\setcounter{equation}{0}

In this section we study the 2d BCNF \eqref{eq:bcnf} in the orientation-preserving case: $\delta_L > 0$ and $\delta_R > 0$.
In this case the sign of $f(x)_2$ is opposite to the sign of $x_1$.
This implies points map between the quadrants of $\mathbb{R}^2$ as shown in Fig.~\ref{fig:RLpSchem_j}.
For example if $x$ belongs to the first quadrant, then $f(x)$ belongs to either the third quadrant or the fourth quadrant.

%%%%%%%%%%%%%%%%%%%%%%%%%%%%%%%%%%%%%%%%%%%%%%%%%%%%%%%%%%%%%
\begin{figure}[b!]
\begin{center}
\includegraphics[height=4.5cm]{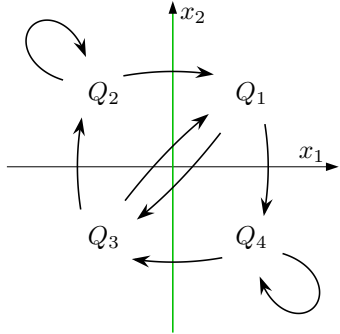}
\caption{
The action of the map \eqref{eq:bcnf} between the quadrants of $\mathbb{R}^2$
in the orientation-preserving setting, $\delta_L > 0$ and $\delta_R > 0$.
Here $Q_i$ denotes the closure of the $i^{\rm th}$ quadrant of $\mathbb{R}^2$.
For example if $x \in Q_1$ then $f(x) \in Q_3 \cup Q_4$.
\label{fig:RLpSchem_j}
}
\end{center}
\end{figure}
%%%%%%%%%%%%%%%%%%%%%%%%%%%%%%%%%%%%%%%%%%%%%%%%%%%%%%%%%%%%%

Let
\begin{align}
\Pi_L &= \left\{ x \in \mathbb{R}^2 \,\big|\, x_1 \le 0 \right\}, \nonumber \\
\Pi_R &= \left\{ x \in \mathbb{R}^2 \,\big|\, x_1 \ge 0 \right\}, \nonumber
\end{align}
denote the closed left and right half-planes.
Also write
\begin{align}
f_L(x) &= A_L x + b, \nonumber \\
f_R(x) &= A_R x + b, \nonumber
\end{align}
for the left and right half-maps of \eqref{eq:bcnf}.

%-------------------------------------------------------------------------------
\subsection{Preimages of the switching manifold under $f_L$}

With $\delta_L \ne 0$ the set $f_L^{-p}(\Sigma)$ is a line for all $p \ge 1$.
Below we use these lines to partition $\Pi_L$
into regions $D_p$ whose points escape $\Pi_L$ after exactly $p$ iterations of $f$,
see already Fig.~\ref{fig:RLpSchem_b}.

If $f_L^{-p}(\Sigma)$ is not vertical,
let $m_p$ denote its slope and let $c_p$ denote its $x_2$-intercept with $\Sigma$.
That is,
\begin{equation}
f_L^{-p}(\Sigma) = \left\{ x \in \mathbb{R}^2 \,\big|\, x_2 = m_p x_1 + c_p \right\}.
\label{eq:fLmpSigma}
\end{equation}
It is a simple 	exercise to show that
$m_1 = -\tau_L$, $c_1 = -1$, and for all $p \ge 1$,
\begin{align}
m_{p+1} &= -\frac{\delta_L}{m_p} - \tau_L \,, \label{eq:mMap} \\
c_{p+1} &= -\frac{c_p}{m_p} - 1, \label{eq:cMap}
\end{align}
whenever $m_p \ne 0$.
Fig.~\ref{fig:RLpSchem_a} shows a typical plot of \eqref{eq:mMap}.

%%%%%%%%%%%%%%%%%%%%%%%%%%%%%%%%%%%%%%%%%%%%%%%%%%%%%%%%%%%%%
\begin{figure}[b!]
\begin{center}
\includegraphics[height=4.5cm]{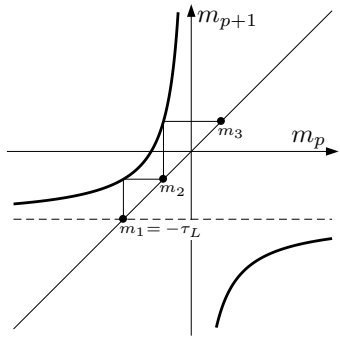}
\caption{
A typical plot of \eqref{eq:mMap}.
Here $p^* = 3$ (see Definition \ref{df:pStar}).
\label{fig:RLpSchem_a}
}
\end{center}
\end{figure}
%%%%%%%%%%%%%%%%%%%%%%%%%%%%%%%%%%%%%%%%%%%%%%%%%%%%%%%%%%%%%

%...............................................................................
\begin{definition}
Let $p^*$ be the smallest $p \ge 1$ for which $m_p \ge 0$,
with $p^* = \infty$ if $m_p < 0$ for all $p \ge 1$.
\label{df:pStar}
\end{definition}

Next we establish monotonicity of $f_L^{-p}(\Sigma)$ in $\Pi_L$ and provide an explicit expression for $p^*$.

%...............................................................................
\begin{lemma}
The sequence $\{ m_p \}_{p=1}^{p^*}$ is increasing;
the sequence $\{ c_p \}_{p=1}^{p^*}$ is decreasing.
\label{le:mpcp}
\end{lemma}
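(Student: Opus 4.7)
The plan is to establish the two monotonicity claims in turn, each by induction based on a well-chosen auxiliary quantity, exploiting the fact that $m_p<0$ for all $1\le p<p^*$ (so multiplying or dividing by $m_p$ is a controlled sign-reversal).

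For $\{m_p\}$, I would introduce $g_p := m_p^2 + \tau_L m_p + \delta_L$. Direct substitution of \eqref{eq:mMap} yields two clean identities: $m_{p+1} - m_p = -g_p/m_p$ and $g_{p+1} = \delta_L\, g_p / m_p^2$. The second, combined with $g_1 = \tau_L^2 + \tau_L(-\tau_L) + \delta_L = \delta_L > 0$, proves by induction that $g_p > 0$ for all $1 \le p \le p^*$. The first then gives $m_{p+1} - m_p > 0$ for all $p < p^*$, since $m_p<0$ on that range.

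For $\{c_p\}$, the correct auxiliary quantity turns out to be
\begin{equation*}
U_p \;:=\; c_p(1+m_p) + m_p.
\end{equation*}
Using \eqref{eq:cMap} one checks that $c_{p+1} - c_p = -U_p/m_p$, so the claim $c_{p+1}<c_p$ reduces (given $m_p<0$) to $U_p < 0$. A slightly longer but purely mechanical calculation then produces the key identity
\begin{equation*}
m_p^2\, U_{p+1} \;=\; c_p\, g_p \;-\; m_p\, U_p.
\end{equation*}
I would then prove simultaneously by induction that $c_p < 0$ and $U_p < 0$ for all $1 \le p \le p^*$. The base case is $c_1 = U_1 = -1$. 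For the inductive step, if $c_p<0$, $U_p<0$, $m_p<0$, and $g_p>0$ (the last by the first half of the argument), then $c_p g_p < 0$ while $m_p U_p > 0$, so $m_p^2 U_{p+1} < 0$ and hence $U_{p+1}<0$. Meanwhile $c_{p+1} = -c_p/m_p - 1 < -1 < 0$ because $-c_p/m_p$ is a quotient of two negatives.

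The main obstacle is guessing the auxiliary quantity $U_p$. The factor $(1+m_p)$ is precisely what is needed to make the identity $m_p^2 U_{p+1} = c_p g_p - m_p U_p$ couple the $c_p$-monotonicity to the already established positivity of $g_p$; once this identity is in hand, the two inductions are essentially automatic.
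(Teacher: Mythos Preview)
Your argument is correct; all four identities check out by direct substitution, and the two inductions go through exactly as you describe. The approach, however, is genuinely different from the paper's.

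For $\{m_p\}$ the paper argues dynamically: writing $g(m)=-\delta_L/m-\tau_L$, one has $g(m_1)-m_1=\delta_L/\tau_L>0$ and $g'(m)=\delta_L/m^2>0$ on $m<0$, so the orbit of $m_1$ under $g$ increases while it stays negative. You instead extract an algebraic invariant: your $g_p$ satisfies the multiplicative recursion $g_{p+1}=\delta_L\,g_p/m_p^2$, which forces $g_p>0$ and hence $m_{p+1}-m_p=-g_p/m_p>0$ without ever comparing successive $m$-values.

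For $\{c_p\}$ the paper feeds the already established monotonicity of $\{m_p\}$ back into the recursion: from $(c_{p+1}+1)/(-c_p)=1/m_p$ together with $c_{p+1}<c_p<0$ and $m_p<m_{p+1}<0$ one obtains $(c_{p+1}+1)/(-c_{p+1})>1/m_{p+1}$, which rearranges to $c_{p+1}>c_{p+2}$. Your auxiliary $U_p=c_p(1+m_p)+m_p$ and the identity $m_p^2 U_{p+1}=c_p g_p - m_p U_p$ bypass this: you never use $m_p<m_{p+1}$, only $g_p>0$, so the two monotonicity claims become almost independent once $g_p>0$ is in hand.

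The paper's route is shorter and more conceptual; yours is more mechanical but has the merit that every step is an explicit identity, and the recursion for $g_p$ even gives the closed form $g_p=\delta_L^{\,p}/(m_1\cdots m_{p-1})^2$, which could be useful if one wanted quantitative control on the gaps $m_{p+1}-m_p$.
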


%...............................................................................
\begin{proof}
Since $m_1 = -\tau_L$, if $\tau_L \le 0$ then $p^* = 1$ and the result is trivial.
Suppose $\tau_L > 0$ (for which $p^* \ge 2$).
Let $g(m) = -\frac{\delta_L}{m} - \tau_L$ denote the map \eqref{eq:mMap}.
Since $g(m_1) - m_1 = \frac{\delta_L}{\tau_L} > 0$
and $\frac{d g}{d m} = \frac{\delta_L}{m^2} > 0$ for all $m < 0$,
the forward orbit of $m_1$ under $g$ is increasing while $m < 0$.
That is, $\{ m_p \}_{p=1}^{p^*}$ is increasing.

We now prove $\{ c_p \}_{p=1}^{p^*}$ is decreasing by induction.
Observe $c_1 > c_2$ because $c_1 = -1$ and $c_2 = -\frac{1}{\tau_L} - 1$ (where $\tau_L > 0$).
Thus it remains to consider $p^* \ge 3$.
Suppose $c_p > c_{p+1}$ for some $p \in \{ 1,\ldots,p^*-2 \}$
(this is our induction hypothesis).
It remains to show $c_{p+1} > c_{p+2}$.
Rearranging \eqref{eq:cMap} produces
\begin{equation}
\frac{c_{p+1} + 1}{-c_p} = \frac{1}{m_p}.
\nonumber
\end{equation}
But $c_{p+1} < c_p < 0$ and $m_p < m_{p+1} < 0$ thus
\begin{equation}
\frac{c_{p+1} + 1}{-c_{p+1}} > \frac{1}{m_{p+1}},
\nonumber
\end{equation}
which is equivalent to $c_{p+1} > -\frac{c_{p+1}}{m_{p+1}} - 1 = c_{p+2}$.
\end{proof}

%...............................................................................
\begin{proposition}
The value $p^*$ of Definition \ref{df:pStar} is given by
\begin{equation}
p^* = \begin{cases}
1, & \tau_L \le 0, \\
\left\lceil \frac{\pi}{\phi} - 1 \right\rceil, & 0 < \tau_L < 2 \sqrt{\delta_L} \,, \\
\infty, & \tau_L \ge 2 \sqrt{\delta_L} \,,
\end{cases}
\label{eq:pMax}
\end{equation}
where $\phi = \cos^{-1} \left( \frac{\tau_L}{2 \sqrt{\delta_L}} \right)
\in \left( 0, \frac{\pi}{2} \right)$, see Fig.~\ref{fig:RLpSchem_d}.
\label{pr:pStar}
\end{proposition}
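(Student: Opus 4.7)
The plan is to linearise the recursion \eqref{eq:mMap} by the substitution $m_p = -y_{p+1}/y_p$. A quick calculation shows that this turns $m_{p+1} = -\delta_L/m_p - \tau_L$ into $y_{p+2} = \tau_L y_{p+1} - \delta_L y_p$, and the initial condition $m_1 = -\tau_L$ is matched by $y_1 = 1$, $y_2 = \tau_L$. Since $m_p \ne 0$ for $p < p^*$, the substitution is valid on this range, and the sign of $m_p$ is opposite to the sign of the ratio $y_{p+1}/y_p$. So the whole question reduces to tracking signs of a linear recurrence whose characteristic polynomial is precisely $\lambda^2 - \tau_L \lambda + \delta_L$, i.e.\ the characteristic polynomial of $A_L$.

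The case $\tau_L \le 0$ is immediate from $m_1 = -\tau_L \ge 0$, giving $p^* = 1$. For $\tau_L \ge 2\sqrt{\delta_L}$ the eigenvalues $\lambda_\pm$ are real and positive, and writing $y_p = (\lambda_+^p - \lambda_-^p)/(\lambda_+ - \lambda_-)$ (handling the double-root case $\tau_L = 2\sqrt{\delta_L}$ separately by $y_p = p (\sqrt{\delta_L})^{p-1}$) one sees $y_p > 0$ for every $p \ge 1$, so $m_p < 0$ for every $p \ge 1$ and $p^* = \infty$.

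The main case is $0 < \tau_L < 2 \sqrt{\delta_L}$, where the eigenvalues are $\sqrt{\delta_L} \,\re^{\pm \ri \phi}$ with $\phi = \cos^{-1}\!\big( \tau_L/(2\sqrt{\delta_L}) \big) \in (0,\pi/2)$. Writing the general real solution as $y_p = \delta_L^{(p-1)/2} \big( C \cos((p-1)\phi) + D \sin((p-1)\phi) \big)$ and solving for $C,D$ from $y_1 = 1$, $y_2 = \tau_L$ yields the clean formula
\begin{equation}
y_p = \delta_L^{(p-1)/2}\,\frac{\sin(p\phi)}{\sin \phi}, \qquad m_p = -\sqrt{\delta_L}\,\frac{\sin((p+1)\phi)}{\sin(p\phi)}.
\nonumber
\end{equation}
Then $p^*$ is the smallest $p \ge 1$ at which $\sin((p+1)\phi)$ and $\sin(p\phi)$ have opposite signs (or the numerator vanishes). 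Since $\phi \in (0,\pi/2)$, the sequence $p\phi$ increases through $(0,\pi)$ and the first crossing of $\pi$ occurs at $p+1 = \lceil \pi/\phi \rceil$, so $p^* = \lceil \pi/\phi \rceil - 1 = \lceil \pi/\phi - 1 \rceil$, which is the required formula.

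The only real obstacle is bookkeeping: verifying that the substitution is well-defined throughout the relevant range, checking that the substitution consistently captures the sign of $m_p$, and confirming the two equivalent forms $\lceil \pi/\phi \rceil - 1 = \lceil \pi/\phi - 1 \rceil$ agree at boundary values (e.g.\ $\phi = \pi/2$, corresponding to $\tau_L \to 0^+$, giving $p^* = 1$, and the limit $\phi \to 0^+$ as $\tau_L \to (2\sqrt{\delta_L})^-$, giving $p^* \to \infty$ consistent with the third case). The monotonicity already established in Lemma \ref{le:mpcp} plays no logical role in the proof but provides a useful sanity check that $m_p$ can only change sign once.
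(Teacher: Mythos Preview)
Your argument is correct and arrives at exactly the same closed-form expression
\[
m_p \;=\; -\sqrt{\delta_L}\,\frac{\sin((p+1)\phi)}{\sin(p\phi)}
\]
that the paper obtains, followed by the same sign analysis. The only genuine difference is packaging: the paper simply posits the formula $m_p = (\lambda_1^{p+1}-\lambda_2^{p+1})/(\lambda_2^p-\lambda_1^p)$ and checks it by induction on the nonlinear recursion, whereas you first linearise via $m_p=-y_{p+1}/y_p$ and solve the resulting second-order linear recurrence. The two are equivalent (your $y_p$ is, up to a constant, $\lambda_1^p-\lambda_2^p$), but your route makes the derivation systematic rather than a guess-and-verify. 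For the case $\tau_L\ge 2\sqrt{\delta_L}$ the two proofs do diverge: the paper argues dynamically, exhibiting a fixed point $m_\infty<0$ of the slope map and showing the orbit of $m_1$ increases monotonically toward it; you instead read off $y_p>0$ for all $p$ directly from the explicit linear solution (handling the repeated-root case separately). Your version is shorter and uniform with the complex case; the paper's version has the mild advantage of giving the limiting slope $m_\infty$ explicitly, which is used elsewhere in the proof of Proposition~\ref{pr:partition}.
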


%%%%%%%%%%%%%%%%%%%%%%%%%%%%%%%%%%%%%%%%%%%%%%%%%%%%%%%%%%%%%
\begin{figure}[b!]
\begin{center}
\includegraphics[height=4.5cm]{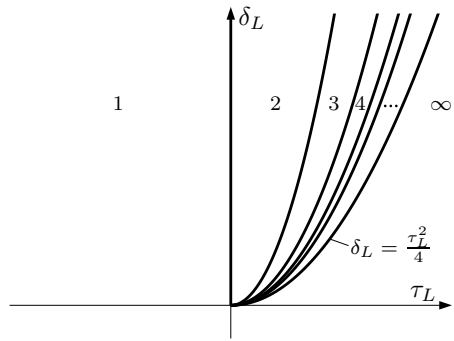}
\caption{
The value $p^*$ of Definition \ref{df:pStar} as given by Lemma \ref{le:mpcp}.
\label{fig:RLpSchem_d}
}
\end{center}
\end{figure}
%%%%%%%%%%%%%%%%%%%%%%%%%%%%%%%%%%%%%%%%%%%%%%%%%%%%%%%%%%%%%

%...............................................................................
\begin{proof}
Since $m_1 = -\tau_L$, if $\tau_L \le 0$ then $p^* = 1$.
If $\tau_L \ge 2 \sqrt{\delta_L}$ then
$m_\infty = \frac{1}{2} \left( -\tau_L - \sqrt{\tau_L^2 - 4 \delta_L} \right)$
is a fixed point of \eqref{eq:mMap}, call this map $g(m)$.
Moreover $m_1 < m_\infty < 0$ and $g(m) > m$ for all $m_1 \le m < m_\infty$
so $m_p \to m_\infty$ as $p \to \infty$.
Thus $p^* = \infty$ in this case.

If $0 < \tau_L < 2 \sqrt{\delta_L}$ then the eigenvalues of $A_L$ are
$\lambda_1 = \sqrt{\delta_L} \,\re^{\ri \phi}$
and $\lambda_2 = \sqrt{\delta_L} \,\re^{-\ri \phi}$.
Then $m_1 = -(\lambda_1 + \lambda_2)$ and \eqref{eq:mMap} can be written as
$m_{p+1} = -\frac{\lambda_1 \lambda_2}{m_p} - (\lambda_1 + \lambda_2)$.
It is a simple exercise to show from these (using induction on $p$) that while $\lambda_1^p \ne \lambda_2^p$ we have
\begin{equation}
m_p = \frac{\lambda_1^{p+1} - \lambda_2^{p+1}}{\lambda_2^p - \lambda_1^p},
\nonumber
\end{equation}
which we can rewrite as
\begin{equation}
m_p = -\frac{\sin((p+1) \phi)}{\sin(p \phi)} \sqrt{\delta_L} \,.
\label{eq:mpExplicit}
\end{equation}
By \eqref{eq:mpExplicit}, if $\frac{\pi}{p+1} \le \phi < \frac{\pi}{p}$,
then $m_p \ge 0$ and $m_j < 0$ for all $j \in \{ 1,\ldots,p-1 \}$, so $p^* = p$.
Since $\frac{\pi}{p+1} \le \phi < \frac{\pi}{p}$ is equivalent to
$p = \left\lceil \frac{\pi}{\phi} - 1 \right\rceil$, the proof is completed.
\end{proof}

%-------------------------------------------------------------------------------
\subsection{Partitioning the left half-plane by the number of iterations required to escape}

By Lemma \ref{le:mpcp}, in $\Pi_L$
each $f_L^{-p}(\Sigma)$ is located below $f_L^{-(p-1)}(\Sigma)$, for $p = 2,\ldots,p^*$, see Fig.~\ref{fig:RLpSchem_b}.
This implies that the regions $D_p \subset \Pi_L$, defined below, are disjoint.

%%%%%%%%%%%%%%%%%%%%%%%%%%%%%%%%%%%%%%%%%%%%%%%%%%%%%%%%%%%%%
\begin{figure}[b!]
\begin{center}
\includegraphics[height=6cm]{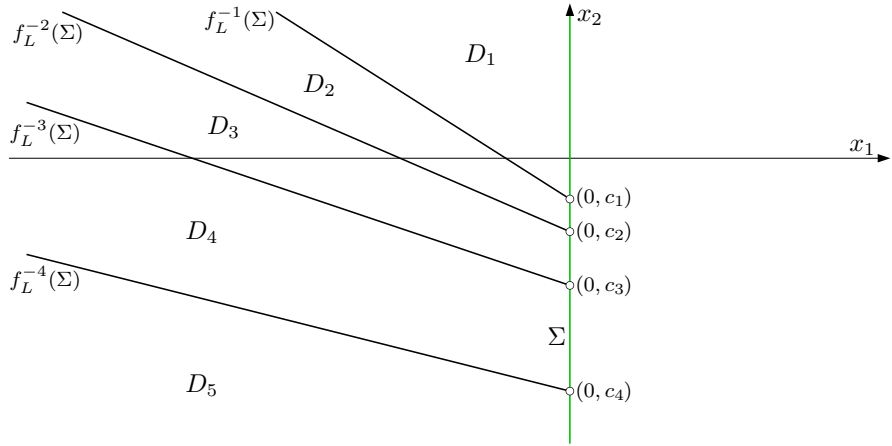}
\caption{
A typical plot of the first few preimages of the switching manifold $\Sigma$ under the left half-map $f_L$.
These lines are described by \eqref{eq:fLmpSigma} and bound the regions $D_p$ of Definition \ref{df:Dp}.
\label{fig:RLpSchem_b}
}
\end{center}
\end{figure}
%%%%%%%%%%%%%%%%%%%%%%%%%%%%%%%%%%%%%%%%%%%%%%%%%%%%%%%%%%%%%

%...............................................................................
\begin{definition}
Let
\begin{equation}
D_1 = \left\{ x \in \Pi_L \,\big|\, x_2 > m_1 x_1 + c_1 \right\}.
\label{eq:D1}
\end{equation}
For all finite $p \in \{ 2,\ldots,p^* \}$ let
\begin{equation}
D_p = \left\{ x \in \Pi_L \,\big|\, m_p x_1 + c_p < x_2 \le m_{p-1} x_1 + c_{p-1} \right\}.
\label{eq:Dp}
\end{equation}
If $p^* < \infty$ also let
\begin{equation}
D_{p^*+1} = \left\{ x \in \Pi_L \,\big|\, x_2 \le m_{p^*} x_1 + c_{p^*} \right\}.
\label{eq:DpStarp1}
\end{equation}
\label{df:Dp}
\end{definition}

Notice that if $p^* < \infty$ then $D_1,\ldots,D_{p^*+1}$ partition $\Pi_L$.
We now look the number of iterations of $f_L$ required for $x \in \Pi_L$ to escape $\Pi_L$.

%...............................................................................
\begin{definition}
Given $x \in \Pi_L$
let $\chi_L(x)$ be the smallest $p \ge 1$ for which $f_L^p(x) \notin \Pi_L$,
with $\chi_L(x) = \infty$ if there exists no such $p$.
\label{df:chiL}
\end{definition}

%...............................................................................
\begin{proposition}
Let $x \in \Pi_L$ and $p \in \{ 1,\ldots,p^*+1 \}$ be finite.
Then $x \in D_p$ if and only if $\chi_L(x) = p$.
\label{pr:partition}
\end{proposition}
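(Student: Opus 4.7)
The plan is to prove the proposition by induction on $p$, reducing everything to the preimage identity $D_p = \Pi_L \cap f_L^{-1}(D_{p-1})$ for each $p \in \{2, \ldots, p^*+1\}$. The base case $p = 1$ is immediate from $f_L(x)_1 = \tau_L x_1 + x_2 + 1$ together with $m_1 = -\tau_L$ and $c_1 = -1$, which gives $\{x \in \Pi_L : \chi_L(x) = 1\} = D_1$. For $p \ge 2$, the statement $\chi_L(x) = p$ is equivalent to $x \in \Pi_L$ together with $\chi_L(f_L(x)) = p - 1$, which by the inductive hypothesis is the same as $f_L(x) \in D_{p-1}$; the entire proof therefore reduces to computing the preimage $f_L^{-1}(D_{p-1})$ in closed form.

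To compute it, I would substitute $f_L(x)_1 = \tau_L x_1 + x_2 + 1$ and $f_L(x)_2 = -\delta_L x_1$ into the inequalities defining $D_{p-1}$. Using the identities $\delta_L + m_q \tau_L = -m_q m_{q+1}$ and $m_q + c_q = -m_q c_{q+1}$, which follow directly from the recurrences \eqref{eq:mMap} and \eqref{eq:cMap}, the inequality $f_L(x)_2 > m_q f_L(x)_1 + c_q$ rearranges to $m_q (x_2 - m_{q+1} x_1 - c_{q+1}) < 0$; dividing by $m_q$ with an appropriate sign flip then produces a strict inequality against line $q+1$. Applying this with $q = p-1$ (and $q = p-2$ when $p \ge 3$), and using Lemma \ref{le:mpcp} to discard the constraint $x_2 \le m_1 x_1 + c_1$ (which is redundant in $\Pi_L$ once $x_2 \le m_{p-1} x_1 + c_{p-1}$ is imposed), recovers the two defining inequalities of $D_p$. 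For $p \in \{2, \ldots, p^*\}$ the involved $m_q$ are strictly negative, so the sign flips occur in the direction matching the definition of $D_p$.

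The case $p = p^*+1$ needs separate treatment because $m_{p^*} \ge 0$. Assuming $m_{p^*} > 0$, the same algebra now produces $x_2 < m_{p^*+1} x_1 + c_{p^*+1}$ without a sign flip, so the preimage consists of the $x \in \Pi_L$ satisfying both $x_2 \le m_{p^*} x_1 + c_{p^*}$ and $x_2 < m_{p^*+1} x_1 + c_{p^*+1}$. To identify this with $D_{p^*+1}$ I would show that the second condition is automatic, i.e.\ that line $p^*+1$ lies strictly above line $p^*$ throughout $\Pi_L$: the $x_2$-intercept inequality $c_{p^*+1} > c_{p^*}$ follows from $c_{p^*+1} = -c_{p^*}/m_{p^*} - 1$ using $c_{p^*} \le -1$ (Lemma \ref{le:mpcp}) and $m_{p^*} > 0$, while the slope comparison $m_{p^*+1} < m_{p^*}$ follows from $m_{p^*+1} = -\delta_L/m_{p^*} - \tau_L$ by splitting into the subcases $p^* \ge 2$ (where $\tau_L > 0$ forces $m_{p^*+1} < 0 \le m_{p^*}$) and $p^* = 1$ (a direct check using $\tau_L < 0$). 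The main obstacle is precisely this final case: the qualitative sign change of $m_{p^*}$ reverses the direction produced by the algebraic translation, so the geometric fact that line $p^*+1$ dominates line $p^*$ in $\Pi_L$ must be extracted from the sign of $c_{p^*}$ together with the case split on $\tau_L$. The degenerate subcase $m_{p^*} = 0$ would need a separate direct computation, since the recurrences \eqref{eq:mMap}, \eqref{eq:cMap} are singular there.
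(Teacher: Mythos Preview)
Your proposal is correct, and the core inductive computation (pushing the defining inequalities of $D_{p-1}$ through $f_L$ via the recurrences for $m_p,c_p$) is exactly what the paper does. The organizational difference is in how the converse is obtained. The paper proves only the forward implication $x\in D_p\Rightarrow\chi_L(x)=p$ by induction, and then gets the reverse implication for free from the fact that (when $p^*<\infty$) the sets $D_1,\ldots,D_{p^*+1}$ partition $\Pi_L$; when $p^*=\infty$ it instead shows directly that the leftover set $E=\Pi_L\setminus\bigcup_p D_p$ is forward invariant under $f_L$, so $\chi_L=\infty$ there. Your route computes the preimage $\Pi_L\cap f_L^{-1}(D_{p-1})$ exactly, proving both directions at once. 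This buys you a uniform treatment of $p^*=\infty$ (no separate $E$ argument is needed), at the cost of the extra work you flag in the $p=p^*+1$ case: because $m_{p^*}\ge 0$ the sign flip reverses, and you must argue separately that the inequality $x_2<m_{p^*+1}x_1+c_{p^*+1}$ is implied by $x_2\le m_{p^*}x_1+c_{p^*}$ on $\Pi_L$. The paper's partition argument sidesteps that case entirely. Both approaches are valid; yours is slightly more computational, the paper's slightly more structural.
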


%...............................................................................
\begin{proof}
We first show that $x \in D_p$ implies $\chi_L(x) = p$ for all $p \in \{ 1,\ldots,p^*+1 \}$ by induction on $p$.
If $x \in D_1$, then $x_2 > -\tau_L x_1 - 1$ (recalling $m_1 = -\tau_L$ and $c_1 = -1$)
and so $f_L(x)_1 = \tau_L x_1 + x_2 + 1 > 0$, hence $\chi_L(x) = 1$.
Suppose $x \in D_p$ implies $\chi_L(x) = p$ for some $p \in \{ 1,\ldots,p^* \}$ (this is our induction hypothesis).
Choose any $x \in D_{p+1}$.
Then $m_{p+1} x_1 + c_{p+1} < x_2 \le m_p x_1 + c_p$.
By using \eqref{eq:bcnf}, \eqref{eq:mMap}, and \eqref{eq:cMap}
we obtain $m_p f_L(x)_1 + c_p < f_L(x)_2 \le m_{p-1} f_L(x)_1 + c_{p-1}$,
except if $p = 1$ the latter inequality is absent.
Thus $f_L(x) \in D_p$ and so $\chi_L \left( f_L(x) \right) = p$ by the induction hypothesis.
Also $f_L(x)_1 \le 0$ (because $x \notin D_1$), therefore $\chi_L(x) = p+1$.

If $p^* < \infty$ the converse is true because $D_1,\ldots,D_{p^*+1}$ partition $\Pi_L$.
It remains show in the case $p^* = \infty$ that if $x \in E = \Pi_L \setminus \bigcup_{p=1}^\infty D_p$,
then $\chi_L(x) = \infty$.
We have
\begin{equation}
E = \left\{ x \in \Pi_L \,\big|\, x_2 \le m_\infty x_1 + c_\infty \right\},
\nonumber
\end{equation}
where $m_\infty = \lim_{p \to \infty} m_p$ is given in the proof of Proposition \ref{pr:pStar}
and $c_\infty = \lim_{p \to \infty} c_p = \frac{-m_\infty}{m_\infty + 1}$.
If $x \in E$ then $f_L(x)_2 \le m_\infty f_L(x)_1 + c_\infty$ (obtained by repeating the calculations in the above induction step)
and $f_L(x)_1 = \tau_L x_1 + x_2 + 1 \le c_\infty + 1$ (obtained by using also $x_1 \le 0$).
Thus $f_L(x)_1 < 0$ (because $c_\infty < -1$ by Lemma \ref{le:mpcp}).
Therefore $f_L(x) \in E$ which shows that $E$ is forward invariant under $f_L$.
Therefore $\chi_L(x) = \infty$ for any $x \in E$.
\end{proof}

%-------------------------------------------------------------------------------
\subsection{A forward invariant region and a trapping region}
\label{sub:Omega}

Let $\beta > 0$ and $X = (0,\beta)$.
Here we use the first few images and preimages of $X$ to form a polygon $\Omega$.
To do this we require the following assumption on $X$:
\begin{equation}
\text{There exist}~ i, j \ge 1 ~\text{such that}~
f^i(X) \in \Pi_L ~\text{and}~ f^{-j}(X) \in \Pi_R \,.
\label{eq:rlExist}
\end{equation}

%...............................................................................
\begin{definition}
Let $r$ and $\ell$ be the smallest values of $i$ and $j$ satisfying \eqref{eq:rlExist}, respectively.
\label{df:rl}
\end{definition}

That is, $f^i(X)_1 > 0$ for all $i = 1,\ldots,r-1$ and $f^r(X)_1 \le 0$.
Also $f^{-j}(X)_1 < 0$ for all $j = 1,\ldots,\ell-1$ and $f^{-\ell}(X)_1 \ge 0$.
Notice $r \ge 2$ because $f(X)_1 = \beta + 1 > 0$.
Also $\ell \ge 2$ because $f^{-1}(X)_1 = -\frac{\beta}{\delta_L} < 0$.

In what follows $\lineFull{P Q}$ denotes the line through distinct points $P, Q \in \mathbb{R}^2$.
Proofs of the next three results are deferred to the end of this section.

%...............................................................................
\begin{lemma}
Suppose \eqref{eq:rlExist} is satisfied.
Let $Z = f^r(X)$ and let $Y$ denote the intersection of $\lineFull{Z f^{-1}(Z)}$ with $\Sigma$.
Let $V = f^{-(\ell-1)}(X)$ and let $U$ denote the intersection of $\lineFull{V f(V)}$ with $f(\Sigma)$,
see Fig.~\ref{fig:RLpSchem_c}.
The closed polygonal chain formed by connecting the points
\begin{equation}
U, f^{-(\ell-2)}(X), \ldots, f^{-1}(X), X, f(X), \ldots, f^{r-1}(X), Z,
\label{eq:OmegaVertices}
\end{equation}
in order, and then from $Z$ back to $U$, has no self-intersections (it is a Jordan curve).
\label{le:simplePolygon}
\end{lemma}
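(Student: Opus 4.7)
The plan is to split the chain into two arms plus two closing edges, analyse the location of each piece, and verify non-self-intersection piece by piece. The \emph{forward arm} consists of the vertices $X, f(X), \ldots, f^{r-1}(X), Z$; the \emph{backward arm} consists of $X, f^{-1}(X), \ldots, f^{-(\ell-2)}(X)$; and the two closing edges $\overline{U\,f^{-(\ell-2)}(X)}$ and $\overline{Z\,U}$ are incident to $U$. By the minimality of $r$ and $\ell$ in Definition~\ref{df:rl}, $f^i(X) \in \Pi_R \setminus \Sigma$ for $1 \le i \le r-1$, $f^{-j}(X) \in \Pi_L \setminus \Sigma$ for $1 \le j \le \ell-1$, $X \in \Sigma$, $Z \in \Pi_L$, and $U \in f(\Sigma)$. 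A direct calculation from~\eqref{eq:bcnf} gives that $f(\Sigma)$ is the $x_1$-axis. Convexity of the half-planes then places every forward-arm edge except the last in $\Pi_R$, every backward-arm edge in $\Pi_L$, and makes the final forward edge cross $\Sigma$ exactly at $Y$.

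The heart of the matter is to verify that the forward arm itself is simple. I would use that $f_R$ is an orientation-preserving affine homeomorphism of $\mathbb{R}^2$ (since $\det(A_R) = \delta_R > 0$) which sends $\overline{f^{i-1}(X)\,f^i(X)}$ onto $\overline{f^i(X)\,f^{i+1}(X)}$, so successive edges of the forward arm are images of one another under $f_R$. Any transverse crossing of two non-adjacent edges persists under iterated application of $f_R^{-1}$, and I would argue that pulling back as many times as possible eventually forces an intersection incompatible with the distinctness of the orbit points and with $X\in\Sigma$. An analogous argument using $f_L^{-1}$ handles the backward arm. Since the two arms lie in disjoint open half-planes away from their common endpoint $X$, they can meet only along the portion of the last forward edge from $Y$ to $Z$ inside $\Pi_L$; I would rule this out by showing that this sub-segment lies on the opposite side of the line $\lineFull{V\,f(V)}$ from the backward vertices.

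The remaining closing edges are comparatively routine. Since $U$ lies on $\lineFull{V\,f(V)}$ and $\overline{U\,f^{-(\ell-2)}(X)}$ is simply the continuation of this line beyond $f(V)$ to $f(\Sigma)$, simplicity of the backward arm forces $\overline{U\,f^{-(\ell-2)}(X)}$ not to cross it; a sign-of-$x_2$ argument shows it also does not cross the forward arm. Finally $\overline{Z\,U}$ connects $Z$ to a point on the $x_1$-axis and can be confined to a region bounded away from both arms by inspecting the second coordinates of the remaining vertices. The main obstacle is the pull-back step in the simplicity argument for each arm: one must verify that every pull-back of a hypothetical crossing remains in the half-plane on which the corresponding half-map is linear, and this relies crucially on the minimality of $r$ and $\ell$ to guarantee that all intermediate orbit points stay in the appropriate half-plane.
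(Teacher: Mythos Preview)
Your overall decomposition into forward arm, backward arm, and closing edges matches the paper's strategy, and so does the idea of exploiting that the half-maps send consecutive edges to one another. The gap is in the contradiction you reach at the end of the pull-back. Saying the final configuration is ``incompatible with the distinctness of the orbit points and with $X\in\Sigma$'' is not the right endpoint; distinctness of vertices does not by itself prevent two non-adjacent edges from crossing. The paper's version is crisper and worth adopting: for the backward arm, if $L_i=[f^{-i}(X),f^{-(i-1)}(X))$ and $L_j$ (with $i<j$) meet at $P$, push $P$ \emph{forward} by $f^i=f_L^i$ rather than pulling back. Then $f^i(P)$ lies both in $f^i(L_i)=[X,f(X))\subset\Pi_R$ and in $f^i(L_j)=L_{j-i}\subset{\rm int}(\Pi_L)$, an immediate contradiction. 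The analogous forward-arm argument uses the $x_2$-sign: $[X,f(X))\subset\{x_2>0\}$ whereas every later forward edge lies in $\{x_2\le0\}$ (since $f^{i}(X)_2=-\delta_R f^{i-1}(X)_1<0$ for $i\ge2$). This sidesteps the obstacle you flag about the last forward edge leaving $\Pi_R$, because you never need to invert $f_R$ on that edge.

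Your handling of the portion $[Y,Z]\subset\Pi_L$ and of the closing edge $\overline{ZU}$ also has a gap: the claim that $[Y,Z]$ lies on the opposite side of $\lineFull{Vf(V)}$ from the backward vertices is not supplied by the hypotheses of this lemma (conditions \eqref{eq:Yass}--\eqref{eq:Zass2} only enter later, in Proposition~\ref{pr:forwardInvariant}). The paper instead observes that every backward vertex $f^{-i}(X)$ with $0\le i\le\ell-2$ has $x_2>0$ (because $f^{-i}(X)_2=-\delta_L f^{-(i+1)}(X)_1>0$), so the backward edges lie in the closed second quadrant, while $Z$, $U$, and the sub-segment of $[f^{r-1}(X),Z)$ in $\Pi_L$ all lie in the closed third quadrant. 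That quadrant separation is what makes the closing edge $\overline{ZU}$ and the $\Pi_L$-portion of the last forward edge disjoint from the backward arm. Replacing your $\lineFull{Vf(V)}$ argument with this $x_2$-sign/quadrant argument would close both gaps.
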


%%%%%%%%%%%%%%%%%%%%%%%%%%%%%%%%%%%%%%%%%%%%%%%%%%%%%%%%%%%%%
\begin{figure}[b!]
\begin{center}
\includegraphics[height=7.5cm]{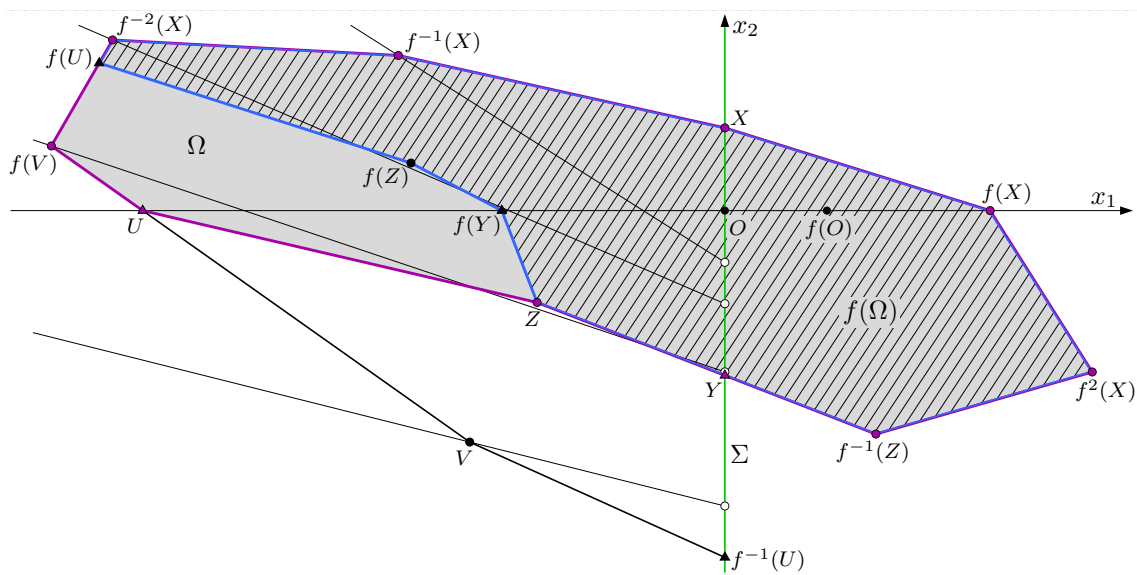}
\caption{
A plot of $\Omega$ (shaded) and $f(\Omega)$ (striped).
The values in Definition \ref{df:rl} are $r = 4$ (so $Z = f^4(X)$) and $\ell = 5$ (so $V = f^{-4}(X)$).
Here \eqref{eq:Yass}--\eqref{eq:Zass2} are satisfied
so $f(\Omega) \subset \Omega$ by Proposition \ref{pr:forwardInvariant}.
\label{fig:RLpSchem_c}
}
\end{center}
\end{figure}
%%%%%%%%%%%%%%%%%%%%%%%%%%%%%%%%%%%%%%%%%%%%%%%%%%%%%%%%%%%%%

%...............................................................................
\begin{definition}
Let $\Omega$ be the polygonal chain of Lemma \ref{le:simplePolygon} and its interior.
\label{df:Omega}
\end{definition}

Now we show that if three conditions are satisfied then $\Omega$ is forward invariant
and we can shrink it by an arbitrarily small amount to obtain a trapping region, $\Omega_{\rm trap}$.

%...............................................................................
\begin{proposition}
Suppose \eqref{eq:rlExist} is satisfied and
\begin{align}
&\text{$Y$ lies above $f^{-1}(U)$}, \label{eq:Yass} \\
&\text{$Z$ lies above $\lineFull{f^{-1}(U) V}$}, \label{eq:Zass1} \\
&\text{$Z$ lies to the right of $\lineFull{V f(V)}$}. \label{eq:Zass2}
\end{align}
Then $\Omega$ is forward invariant under $f$.
Moreover, for all $\ee > 0$ there exists a trapping region $\Omega_{\rm trap} \subset \Omega$ for $f$
with $d_H(\Omega,\Omega_{\rm trap}) \le \ee$, where $d_H$ denotes Hausdorff distance\footnote{
The {\em Hausdorff distance} between sets $\Omega_1$ and $\Omega_2$ is defined as
\begin{equation}
d_H(\Omega_1,\Omega_2) = \max \left[ \sup_{x \in \Omega_1} \inf_{y \in \Omega_2} \| x - y \|, \sup_{y \in \Omega_2} \inf_{x \in \Omega_1} \| x - y \| \right].
\nonumber
\end{equation}
}.
\label{pr:forwardInvariant}
\end{proposition}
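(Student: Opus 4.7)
The strategy is to verify $f(\partial\Omega)\subseteq\Omega$. Since $\delta_L,\delta_R>0$, Lemma \ref{le:fInverse} shows $f$ is a homeomorphism; combining this with the Jordan curve theorem and continuity of $f$, $f(\partial\Omega)\subseteq\Omega$ implies $f(\Omega)\subseteq\Omega$.

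Partition $\partial\Omega$ into (a) forward-orbit edges $f^i(X)\to f^{i+1}(X)$ for $0\le i\le r-2$ (lying in $\Pi_R\cup\Sigma$); (b) backward-orbit edges $f^{-j}(X)\to f^{-(j-1)}(X)$ for $\ell-2\ge j\ge 1$ (lying in $\Pi_L\cup\Sigma$); and (c) three transition edges $f^{r-1}(X)\to Z$, $Z\to U$, and $U\to f^{-(\ell-2)}(X)$. Applying $f_R$ to each forward-orbit edge sends it to the next edge in the chain, with the last one mapping to the transition edge $f^{r-1}(X)\to Z$; similarly $f_L$ shifts the backward-orbit edges along the chain, the last one becoming the first forward-orbit edge $X\to f(X)$. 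All these images lie in $\partial\Omega\subseteq\Omega$, so the nontrivial work is to track the three transition edges.

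The edge $f^{r-1}(X)\to Z$ crosses $\Sigma$ exactly at $Y$ by definition of $Y$, so its image is the polygonal chain $Z\to f(Y)\to f(Z)$ with $f(Y)\in f(\Sigma)$. The edge $U\to f^{-(\ell-2)}(X)$ passes through $V=f^{-(\ell-1)}(X)$, and (in the typical case $U\in\Pi_R$) crosses $\Sigma$ at some $U^\ast$; its image is $f(U)\to f(U^\ast)\to f^{-(\ell-3)}(X)$, with the second segment extending the backward-orbit edge $f^{-(\ell-2)}(X)\to f^{-(\ell-3)}(X)$ past the vertex $f(V)=f^{-(\ell-2)}(X)$. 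The edge $Z\to U$, possibly crossing $\Sigma$, has an image joining $f(Z)$ and $f(U)$. I would check each new segment against each edge of $\partial\Omega$; this is where the hypotheses enter. Condition \eqref{eq:Yass} controls the location of $f(Y)$ on $f(\Sigma)$ so that $f(Y)\in\Omega$; \eqref{eq:Zass1} places $f(Z)$ on the correct side of the relevant boundary edges; and \eqref{eq:Zass2} governs the direction the edge $Z\to U$ exits $Z$, so that the image chain through $f(Z)$ stays inside $\Omega$.

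The main obstacle is exactly this plane-geometry case analysis: matching each new segment to the hypothesis that places it inside $\Omega$, and doing so without inadvertently invoking stronger conditions. Because $f_L$ and $f_R$ are affine and $\Omega$ is a polygon, each verification reduces to a linear inequality that follows from \eqref{eq:Yass}--\eqref{eq:Zass2}. For the trapping region, observe that since the three hypotheses are strict, the images of the transition edges lie in $\mathrm{int}(\Omega)$, while $f(\partial\Omega)\cap\partial\Omega$ consists only of the shifted orbit edges; this gives positive ``slack'' along every portion of the boundary. By perturbing the edges of $\Omega$ inward by a small amount $\ee'\le\ee$, chosen compatibly with the Jacobians of $A_L$ and $A_R$ (so that each perturbed orbit edge again maps strictly inside), one obtains $\Omega_{\rm trap}\subset\Omega$ with $d_H(\Omega,\Omega_{\rm trap})\le\ee$ and $f(\Omega_{\rm trap})\subset\mathrm{int}(\Omega_{\rm trap})$; compactness of $\Omega$ and continuity of $f$ make this work for all sufficiently small $\ee'>0$.
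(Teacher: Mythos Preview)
Your boundary-tracking strategy for forward invariance is a legitimate alternative to the paper's route, but as written it has real gaps. The paper does \emph{not} track $f(\partial\Omega)$ edge-by-edge; instead it proves that $\Omega_{\rm upper}$ (the part of $\Omega$ in $x_2\ge 0$) and $\Omega_{\rm lower}$ (in $x_2\le 0$) are \emph{convex} via a sign-preservation identity $S(f_L(x))=\det(A_L)S(x)$ for the wedge $S(x)=g(x)\wedge g(f_L^{-1}(x))$. Convexity then reduces $f_L(\Omega_L)\subset\Omega_{\rm upper}$ and $f_R(\Omega_R)\subset\Omega_{\rm lower}$ to vertex checks. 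Your approach must instead show that each image \emph{segment} lies in $\Omega$, and without convexity you have no mechanism for that; the acknowledged ``plane-geometry case analysis'' is not a detail but the whole argument. Also, $U$ lies on the negative $x_1$-axis (so $U\in\Pi_L$, not $\Pi_R$); consequently the edge $U\to f^{-(\ell-2)}(X)$ and the edge $Z\to U$ are both contained in $\Pi_L$ and do not cross $\Sigma$.

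The trapping-region step is the more serious gap. Your claim that strict hypotheses give ``positive slack along every portion of the boundary'' is false: each orbit edge $[f^{-j}(X),f^{-(j-1)}(X)]$ maps \emph{exactly} onto the next orbit edge, with zero slack. A uniform inward perturbation cannot work, because moving edge $j$ inward by $\delta$ moves its $f$-image by a comparable amount, which need not clear the perturbed edge $j{+}1$. The paper's remedy is to move vertex $P^{(j)}$ toward the origin by $\ee^j$ (not $\ee$), so the perturbation decays geometrically along the orbit direction; one then checks that $f(P^{(j)}_\ee)$ lies at distance proportional to $\ee^j$ from $[P^{(j+1)},P^{(j+2)}]$, while the perturbed edge $[P^{(j+1)}_\ee,P^{(j+2)}_\ee]$ has moved only by order $\ee^{j+1}$, yielding strict containment for small $\ee$. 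Your phrase ``chosen compatibly with the Jacobians'' does not supply this construction, and without it the argument does not close.
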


The next result allows us to apply Theorem \ref{th:main}.
The actual application to Theorem \ref{th:main} is detailed in the next section.

%...............................................................................
\begin{proposition}
Suppose \eqref{eq:rlExist}, \eqref{eq:Yass}, \eqref{eq:Zass1}, and \eqref{eq:Zass2} are satisfied.
Let
\begin{equation}
\Omega_{\rm rec} = \left\{ x \in \Omega \,\big|\, x_1 > 0 \right\},
\label{eq:OmegaGen}
\end{equation}
and $\bW$ be given by \eqref{eq:bW} with
\begin{equation}
p_{\rm max} = \max \left[ \chi_L(Y), \chi_L(Z) \right].
\label{eq:pMax2}
\end{equation}
Then $\Omega_{\rm rec}$ is $\bW$-recurrent and $\Omega \subset \bigcup_{i=-\ell}^0 f^i(\Omega_{\rm rec})$.
\label{pr:goFromOmegaGen}
\end{proposition}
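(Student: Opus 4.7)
My plan is to prove the two assertions in turn, invoking forward invariance of $\Omega$ (Proposition \ref{pr:forwardInvariant}) throughout. Both reduce to a uniform upper bound on $\chi_L$ across the polygon $\Omega \cap \Pi_L$.

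For the $\bW$-recurrence of $\Omega_{\rm rec}$, fix $x \in \Omega_{\rm rec}$, so $x_1 > 0$ gives initial symbol $R$. Let $k \ge 1$ be the first index with $f^k(x) \in \Pi_L$; if no such $k$ exists then $n = 1$ works, as the only word in $\Gamma(x;1)$ is $R = RL^0 \in \bW$. Otherwise set $p = \chi_L(f^k(x))$ and $n = k + p$; forward invariance gives $f^n(x) \in \Omega \cap \Pi_R$, which (after one more step if necessary to escape $\Sigma$) lies in $\Omega_{\rm rec}$. Every word in $\Gamma(x;n)$ equals $R^k L^p$, which I would decompose as $k-1$ copies of $R = RL^0$ followed by $RL^p$. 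The remaining task is $p \le p_{\max}$, which reduces to the uniform claim $\chi_L(y) \le p_{\max}$ for all $y \in \Omega \cap \Pi_L$.

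I would prove this key claim by observing that, via Proposition \ref{pr:partition} and Lemma \ref{le:mpcp}, the sublevel set $\{y \in \Pi_L : \chi_L(y) \le p\}$ equals the convex half-plane $\{y \in \Pi_L : y_2 > m_p y_1 + c_p\}$. Hence any segment with both endpoints satisfying $\chi_L \le p_{\max}$ lies entirely in the corresponding half-plane, and the bound propagates to the polygon interior by a standard ray-crossing argument. It thus suffices to check $\chi_L$ at each vertex of $\Omega \cap \Pi_L$: the orbit vertices $f^{-j}(X)$ lie on $f_L^{-j}(\Sigma)$ with $\chi_L(f^{-j}(X)) = j+1 \le \ell - 1$; the auxiliary vertex $U$, intersection of $\lineFull{V f(V)}$ with $f(\Sigma)$, satisfies $\chi_L(U) = \ell - 1$ by iterating the affine formula $f_L^i(\lambda V + (1-\lambda) f(V)) = \lambda f^i(V) + (1-\lambda) f^{i+1}(V)$ until the first coordinate becomes positive; and $\chi_L(Y), \chi_L(Z) \le p_{\max}$ by the definition of $p_{\max}$.

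For the inclusion $\Omega \subset \bigcup_{i=-\ell}^0 f^i(\Omega_{\rm rec})$, I need, for each $y \in \Omega$, an index $k \in \{0, 1, \ldots, \ell\}$ with $f^k(y) \in \Omega_{\rm rec}$. If $y_1 > 0$ take $k = 0$. Otherwise, since $D_p = f_L^{-1}(D_{p-1})$ for $p \ge 2$, each application of $f_L$ reduces $\chi_L$ by exactly one, so the orbit escapes $\Pi_L$ after precisely $\chi_L(y)$ iterations and lands in $\Omega_{\rm rec}$ by forward invariance. The same vertex argument yields $\chi_L(y) \le \ell$ on $\Omega \cap \Pi_L$. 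The main obstacle throughout is this uniform $\chi_L$-bound; in particular, control along the edges $YZ$ and $ZU$ relies on the conditions \eqref{eq:Yass}--\eqref{eq:Zass2}, which prevent the polygon from reaching $D_p$ regions deeper than those containing $Y$ and $Z$.
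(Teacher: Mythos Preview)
Your argument for the inclusion $\Omega \subset \bigcup_{i=-\ell}^0 f^i(\Omega_{\rm rec})$ is essentially the paper's, and is fine.

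The main gap is in the $\bW$-recurrence argument. You reduce it to the uniform claim $\chi_L(y) \le p_{\max}$ for all $y \in \Omega \cap \Pi_L$, but this claim is \emph{false} under the stated hypotheses. The vertices $f^{-j}(X)$ of $\Omega$ have $\chi_L = j+1$, so the deepest one gives $\chi_L = \ell-1$, and nothing in \eqref{eq:Yass}--\eqref{eq:Zass2} forces $\ell - 1 \le p_{\max}$. Concretely, take $\tau_L = 0.7$, $\tau_R = 0$, $\delta_L = \delta_R = 0.3$, $\beta = 5$: then $\ell = 4$, but $Y$ and $Z$ both lie in $D_1$, so $p_{\max} = 1$, while $f^{-2}(X) \in \Omega \cap \Pi_L$ has $\chi_L = 3$. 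One can check directly that \eqref{eq:rlExist} and \eqref{eq:Yass}--\eqref{eq:Zass2} all hold at these values.

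The paper avoids this by not bounding $\chi_L$ on all of $\Omega \cap \Pi_L$. Instead, for $y \in \Omega_{\rm rec}$ it observes that $f(y) \in f_R(\Omega_R) \subset \Omega_{\rm lower}$ (this containment is Step~4 of the proof of Proposition~\ref{pr:forwardInvariant}), so $f(y) \cap \Pi_L$ lies in the small quadrilateral $Y Z f(Y) O$. On \emph{that} region the only relevant vertex values of $\chi_L$ are $\chi_L(Y)$, $\chi_L(Z)$, $\chi_L(f(Y)) = \chi_L(Y)-1$, and $\chi_L(O) = 1$, which gives exactly the bound $p_{\max}$. Your approach of going $k$ steps before first hitting $\Pi_L$ is fine, but you must note that $f^{k-1}(x) \in \Omega_{\rm rec}$ and hence $f^k(x) \in \Omega_{\rm lower} \cap \Pi_L$, not merely in $\Omega \cap \Pi_L$.

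A secondary issue: your statement ``every word in $\Gamma(x;n)$ equals $R^k L^p$'' ignores the set-valued nature of $\gamma_{\rm set}$ on $\Sigma$. If some iterate lands on $\Sigma$, $\Gamma(x;n)$ contains several words (the paper lists up to four). Definition~\ref{df:recurrent} requires that \emph{each} such word be a concatenation of elements of $\bW$, so this case must be handled explicitly.
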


%...............................................................................
\begin{proof}[Proof of Lemma \ref{le:simplePolygon}]
Here we use the notation $[A,B)$ to denote the line segment $\{ (1-s) A + s B \,|\, 0 \le s < 1 \}$
for points $A, B \in \mathbb{R}^2$.

The points $X$ and $f(X)$ belong to $\Pi_R$, thus the line segment $[X, f(X))$, call it $\zeta$, is contained in $\Pi_R$.
The points $f^{-(\ell-1)}(X), \ldots, f^{-2}(X), f^{-1}(X)$ all belong to ${\rm int}(\Pi_L)$,
thus for each $i \in \{ 1,\ldots,\ell-1 \}$ the line segment $L_i = \big[ f^{-i}(X), f^{-(i-1)}(X) \big)$ is contained in ${\rm int}(\Pi_L)$.
These line segments are mutually disjoint because if $L_i$ and $L_j$, with $i < j$,
intersect at some point $P$,
then $P \in L_i$ implies $f^i(P) \in \zeta$ and so $f^i(P)_1 \ge 0$,
while $P \in L_j$ implies $f^i(P) \in L_{j-i}$ and so $f^i(P)_1 < 0$,
which is a contradiction.
By a similar argument the line segments $\big[ f^i(X), f^{i+1}(X) \big)$, for $i \in \{ 1,\ldots,r-1 \}$, are mutually disjoint
and contained in $x_2 \le 0$ (whereas $\zeta$ is contained in $x_2 > 0$).
This shows that the chain formed by connecting the points \eqref{eq:OmegaVertices} has no self-intersections.
The addition of $[Z, U)$ introduces no intersections
because $[Z, U)$ and part of $\left[ f_R^{r-1}(X), Z \right)$
are the only components of the chain that belong to the third quadrant.
\end{proof}

%%%%%%%%%%%%%%%%%%%%%%%%%%%%%%%%%%%%%%%%%%%%%%%%%%%%%%%%%%%%%
\begin{figure}[b!]
\begin{center}
\includegraphics[height=3cm]{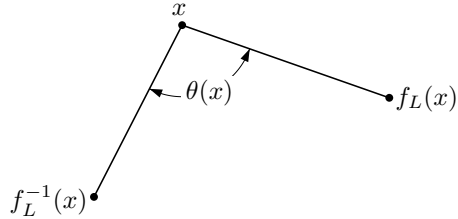}
\caption{
The angle $\theta(x)$ formed at a point $x$ by its preimage and image under $f_L$.
\label{fig:RLpSchem_o}
}
\end{center}
\end{figure}
%%%%%%%%%%%%%%%%%%%%%%%%%%%%%%%%%%%%%%%%%%%%%%%%%%%%%%%%%%%%%

%...............................................................................
\begin{proof}[Proof of Proposition \ref{pr:forwardInvariant}]~\\
\myStep{1}{Interior angles of $\Omega$.}\\
Define $\theta : \mathbb{R}^2 \to [0,2 \pi)$ as follows:
$\theta(x)$ is the angle at $x$ from $\left[ f_L^{-1}(x), x \right]$ anticlockwise to $\left[ x, f_L(x) \right]$,
see Fig.~\ref{fig:RLpSchem_o}.
Also define $g(x) = f_L(x) - x$ and
\begin{equation}
S(x) = g(x) \wedge g \left( f_L^{-1}(x) \right),
\label{eq:S}
\end{equation}
where we have defined the wedge product $A \wedge B = A_1 B_2 - A_2 B_1$.
It is a simple exercise to show that\footnote{
Equation \eqref{eq:sinTheta} is a version of the well-known formula $\sin(\theta) = \frac{\| u \times v \|}{\| u \| \| v \|}$
for the sine of the angle between $u, v \in \mathbb{R}^3$.
}
\begin{equation}
\sin(\theta(x)) = \frac{S(x)}{\| g(x) \| \left\| g \left( f_L^{-1}(x) \right) \right\|}.
\label{eq:sinTheta}
\end{equation}
From $f_L(x) = A_L x + b$ we obtain the formula $g(f_L(x)) = A_L g(x)$.
From this and \eqref{eq:S} we obtain
\begin{equation}
S(f_L(x)) = \det(A_L) S(x).
\label{eq:SfLx}
\end{equation}
Since $\det(A_L) > 0$ we can conclude that sign of $S$ (and thus also the sign of $\sin(\theta)$) is constant along orbits of $f_L$.
In particular, $\sin \left( \theta \left( f_L^{-i}(X) \right) \right)$ has the same sign for each $i \in \{ 0,1,\ldots,\ell-2 \}$.
Thus the angles $\theta \left( f_L^{-i}(X) \right)$ must be all less than $\pi$, all equal to $\pi$, or all greater than $\pi$.
But the path connecting $X, f^{-1}(X), \ldots, f^{-\ell}(X)$ (where $f = f_L$)
includes $X$ on the positive $x_2$-axis, $U$ on the negative $x_1$-axis, and $f_L^{-1}(U)$ on the negative $x_2$-axis,
and has no self-intersections (Lemma \ref{le:simplePolygon}).
Therefore the angles are all less than $\pi$.
By applying a similar argument to $f_R$
we conclude that all interior angles of $\Omega$ are less than $\pi$,
except possibly at the points $U$ and $Z$.

\myStep{2}{Convex subsets of $\Omega$.}\\
We now define two convex subsets of $\Omega$ (one in $x_2 \le 0$ and one in $x_2 \ge 0$).
Let $\Omega_{\rm upper}$ be the polygon formed by connecting the points 
\begin{equation}
U, f^{-(\ell-2)}(X), \ldots, f^{-1}(X), X, f(X),
\label{eq:OmegaUpper}
\end{equation}
in order, and from $f(X)$ back to $U$.
Let $\Omega_{\rm lower}$ be the polygon formed by connecting the points 
\begin{equation}
f(X), f^2(X), \ldots, f^{r-1}(X), Z, f(Y),
\label{eq:OmegaLower}
\end{equation}
in order, and from $f(Y)$ back to $f(X)$.
Since $U$ and $f(X)$ lie on $x_2 = 0$ while all other vertices of $\Omega_{\rm upper}$ lie in $x_2 \ge 0$,
the interior angles of $\Omega_{\rm upper}$ at $U$ and $f_R(x)$ are less than $\pi$.
Thus by the previous result $\Omega_{\rm upper}$ is convex.
For similar reasons, $\Omega_{\rm lower}$ is also convex.

\myStep{3}{Consequences of assumptions \eqref{eq:Yass}--\eqref{eq:Zass2}.}\\
All points between $U = (U_1,0)$, where $U_1 < 0$, and $f_R(X) = (\beta+1,0)$, where $\beta > 0$, belong to ${\rm int}(\Omega)$.
This includes $O = (0,0)$ and $f(O) = (1,0)$.
Also $f(Y) \in {\rm int}(\Omega)$ because \eqref{eq:Yass} implies that $f(Y)$ lies between $U$ and $f(O)$.
We now show $f(Z) \in {\rm int}(\Omega)$.
Assumptions \eqref{eq:Zass1} and \eqref{eq:Zass2} imply that either $Z = Y$ (in which case $f(Z) \in {\rm int}(\Omega)$ is immediate)
or $Z$ belongs to the interior of the quadrilateral $\mathcal{Q} = f^{-1}(U) V U O$.
Each vertex of $\mathcal{Q}$ belongs to $\Pi_L$ where $f = f_L$ is affine,
thus $f(\mathcal{Q})$ is the quadrilateral $U f(V) f(U) f(O)$.
Each vertex of $f(\mathcal{Q})$ belongs to $\Omega_{\rm upper}$, which is convex,
thus $f(\mathcal{Q}) \subset \Omega_{\rm upper}$.
Since $Z \in {\rm int}(\mathcal{Q})$ we have that $f(Z) \in {\rm int}(\Omega_{\rm upper}) \subset {\rm int}(\Omega)$.

\myStep{4}{Forward invariance of $\Omega$.}\\
Write $\Omega = \Omega_L \cup \Omega_R$ where
\begin{align}
\Omega_L &= \Omega \cap \Pi_L \,, \nonumber \\
\Omega_R &= \Omega \cap \Pi_R \,. \nonumber
\end{align}
Observe $f(\Omega) = f_L(\Omega_L) \cup f_R(\Omega_R)$.
Since $f_L$ is affine, $f_L(\Omega_L)$ is a polygon.
Evidently its vertices all belong to $\Omega_{\rm upper}$.
Since $\Omega_{\rm upper}$ is convex, $f_L(\Omega_L) \subset \Omega_{\rm upper} \subset \Omega$.
By a similar argument, $f_R(\Omega_R) \subset \Omega_{\rm lower} \subset \Omega$,
thus $\Omega$ is forward invariant.

\myStep{5}{Define $\Omega_{\rm trap}$.}\\
Let $P^{(1)}, \ldots, P^{(\ell+r)}$ denote the points \eqref{eq:OmegaVertices} in order.
That is, $P^{(1)} = U$ around to $P^{(\ell + r)} = Z$.
For each $j \in \{ 1,\ldots,\ell+r \}$ define
\begin{equation}
P^{(j)}_\ee = \left( 1 - \frac{\ee^j}{\| P^{(j)} \|} \right) P^{(j)},
\label{eq:Pjee}
\end{equation}
and assume $\ee$ is small enough that $1 - \frac{\ee^j}{\| P^{(j)} \|} > 0$ for each $j$.
Each $P^{(j)}_\ee$ is the result of moving from $P^{(j)}$ a distance $\ee^j$ towards $O$, see Fig.~\ref{fig:RLpSchem_n}.
Let $\Omega_{\rm trap}$ be the polygon with vertices $P^{(j)}_\ee$ (connected in the same order as for $\Omega$).
Immediately we have $d_H(\Omega,\Omega_{\rm trap}) \le \ee$.
Also $\Omega_{\rm trap} \subset \Omega$ because $\left[ P^{(j)}, O \right] \subset \Omega$ for each $j$.

%%%%%%%%%%%%%%%%%%%%%%%%%%%%%%%%%%%%%%%%%%%%%%%%%%%%%%%%%%%%%
\begin{figure}[b!]
\begin{center}
\includegraphics[height=7.5cm]{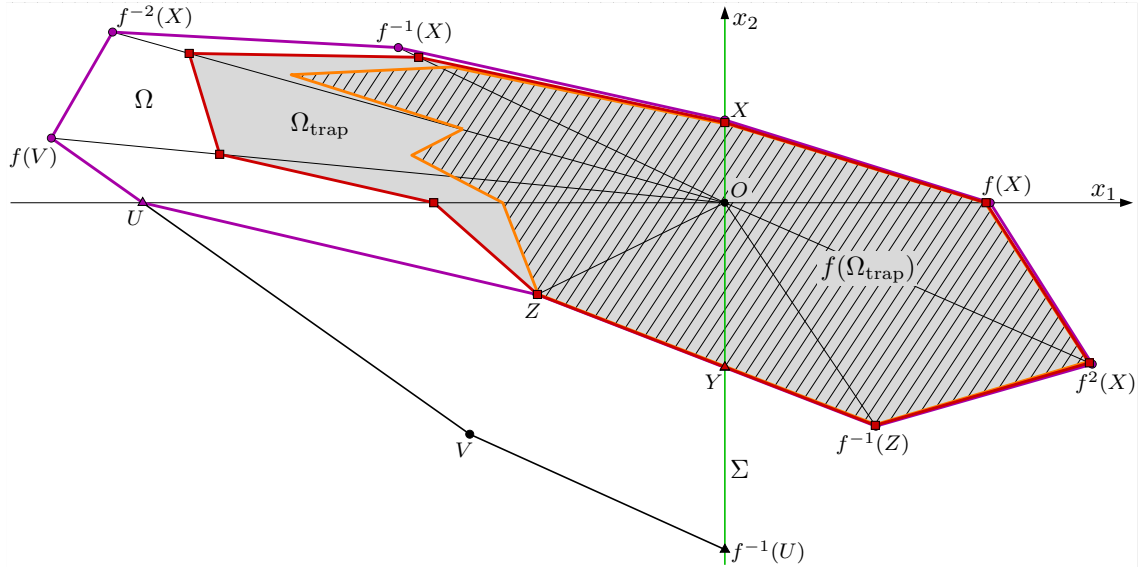}
\caption{
A plot of $\Omega$ (unshaded), $\Omega_{\rm trap}$ (shaded), and $f(\Omega_{\rm trap})$ (striped).
Here $r = 4$ and $\ell = 5$ as in Fig.~\ref{fig:RLpSchem_c}.
The set $\Omega_{\rm trap}$ is plotted by using $\ee = 0.5$ in \eqref{eq:Pjee}.
\label{fig:RLpSchem_n}
}
\end{center}
\end{figure}
%%%%%%%%%%%%%%%%%%%%%%%%%%%%%%%%%%%%%%%%%%%%%%%%%%%%%%%%%%%%%

\myStep{6}{Convex subsets of $\Omega_{\rm trap}$.}\\
Analogous to $\Omega_{\rm upper}$ and $\Omega_{\rm lower}$,
let $\Omega_{\rm trap,upper} \subset \Omega_{\rm trap}$ be the polygon in $x_2 \ge 0$ formed by connecting the points 
\begin{equation}
P^{(1)}_\ee, P^{(2)}_\ee, \ldots, P^{(\ell+1)}_\ee,
\label{eq:OmegaTrapUpper}
\end{equation}
in order, and from $P^{(\ell+1)}_\ee$ back to $P^{(1)}_\ee$.
Notice $P^{(1)}_\ee$ and $P^{(\ell+1)}_\ee$ lie on $x_2 = 0$.
Also let $Y_\ee \in \Sigma$ denote the intersection of
$\big[ P^{(\ell+r)}_\ee, P^{(\ell+r-1)}_\ee \big]$ with $\Sigma$ and
let $\Omega_{\rm trap,lower} \subset \Omega_{\rm trap}$ be the polygon in $x_2 \le 0$ formed by connecting the points 
\begin{equation}
P^{(\ell+1)}_\ee, P^{(\ell+2)}_\ee, \ldots, P^{(\ell+r)}_\ee, f(Y_\ee),
\label{eq:OmegaTrapLower}
\end{equation}
in order, and from $f(Y_\ee)$ back to $P^{(\ell+1)}_\ee$.
Notice $f(Y_\ee)$ lies on $x_2 = 0$ close to $f(Y)$.
Each interior angle of $\Omega_{\rm trap,upper}$ and $\Omega_{\rm trap,lower}$
is at most an order-$\ee$ perturbation of the corresponding interior angle of $\Omega_{\rm upper}$ or $\Omega_{\rm lower}$.
All interior angles of $\Omega_{\rm upper}$ and $\Omega_{\rm lower}$ are less than $\pi$,
so the same is true for $\Omega_{\rm trap,upper}$ and $\Omega_{\rm trap,lower}$
assuming $\ee$ is sufficiently small.
That is, $\Omega_{\rm trap,upper}$ and $\Omega_{\rm trap,lower}$ are convex.

\myStep{7}{The set $\Omega_{\rm trap}$ is a trapping region.}\\
Write $\Omega_{\rm trap} = \Omega_{{\rm trap},L} \cup \Omega_{{\rm trap},R}$ where
\begin{align}
\Omega_{{\rm trap},L} &= \Omega_{\rm trap} \cap \Pi_L \,, \nonumber \\
\Omega_{{\rm trap},R} &= \Omega_{\rm trap} \cap \Pi_R \,, \nonumber
\end{align}
and observe $f(\Omega_{\rm trap}) = f_L(\Omega_{{\rm trap},L}) \cup f_R(\Omega_{{\rm trap},R})$.

The vertices of $\Omega_{{\rm trap},L}$ are $Y_\ee$, $P^{(\ell+r)}_\ee$, and $P^{(j)}_\ee$ for $j = 1,\ldots,\ell$.
We now show that, if $\ee$ is sufficiently small, then these vertices all map under $f = f_L$ to either ${\rm int}(\Omega_{\rm trap,upper})$
or to a point on $x_2 = 0$ in the open line segment $I = \big( P^{(1)}_\ee, P^{(\ell+1)}_\ee \big)$.
Since $\Omega_{\rm trap,upper}$ is convex 
this implies $f_L(\Omega_{{\rm trap},L}) \subset {\rm int}(\Omega_{\rm trap,upper}) \cup I$.
Consequently $f_L(\Omega_{{\rm trap},L}) \subset {\rm int}(\Omega_{\rm trap})$
because $I \subset {\rm int}(\Omega_{\rm trap})$.

Certainly $f(Y_\ee), f \big( P^{(\ell)}_\ee \big) \in I$, assuming $\ee$ is sufficiently small.
If $Z \ne Y$ then $f \big( P^{(\ell+r)}_\ee \big) \in {\rm int}(\Omega_{\rm trap,upper})$,
assuming $\ee$ is sufficiently small, because $f(Z) \in {\rm int}(\Omega)$.
If $Z = Y$ then $f \big( P^{(\ell+r)}_\ee \big) \in I$.
Now choose any $j = 1,\ldots,\ell-1$.
By definition, $P^{(j)}_\ee = (1-s) P^{(j)} + s O$, with $s = \frac{\ee^j}{\| P^{(j)} \|}$.
In $\Pi_L$, $f = f_L$ is affine, so we have
\begin{equation}
f \left( P^{(j)}_\ee \right) = \begin{cases}
(1-s) f(U) + s f(O), & j = 1, \\
(1-s) P^{(j+1)} + s f(O), & j = 2,\ldots,\ell-1,
\end{cases}
\label{eq:fLPjee}
\end{equation}
Therefore, for $j \ne 1$, $f \big( P^{(j)}_\ee \big)$ is the result of moving from $P^{(j+1)}$ a distance
$\frac{\ee^j \| P^{(j+1)} - f(O) \|}{\| P^{(j)} \|}$ towards $f(O)$.
Thus $f \big( P^{(j)}_\ee \big)$ belongs to the triangle $P^{(j+1)} P^{(j+2)} O$, assuming $\ee$ is sufficiently small,
because $P^{(j+1)}$ and $P^{(j+2)}$ lie in $x_2 \ge 0$
with $P^{(j+2)}$ located clockwise (with respect to $O$) from $P^{(j+1)}$.
This is true in the case $j = 1$ also.
The distance from $f \big( P^{(j)}_\ee \big)$ to
$\big[ P^{(j+1)}, P^{(j+2)} \big]$ is proportional to $\ee^j$,
while $d_H \big( \big[ P^{(j+1)}_\ee, P^{(j+2)}_\ee \big], \big[ P^{(j+1)}, P^{(j+2)} \big] \big)$
is proportional to $\ee^{j+1}$.
Therefore, assuming $\ee$ is sufficiently small,
$f \big( P^{(j)}_\ee \big)$ lies in the triangle $P^{(j+1)}_\ee P^{(j+2)}_\ee O$
and not on the line segment $\big[ P^{(j+1)}_\ee, P^{(j+2)}_\ee \big]$.
Thus $f \big( P^{(j)}_\ee \big) \in {\rm int}(\Omega_{\rm trap,upper})$
and this completes our demonstration that $f_L(\Omega_{{\rm trap},L}) \subset {\rm int}(\Omega_{\rm trap})$.

From similar arguments it follows that
$f_R \left( \Omega_{{\rm trap},R} \right) \subset {\rm int}(\Omega_{\rm trap,lower}) \cup I$,
assuming $\ee$ is sufficiently small,
and so $f_R(\Omega_{{\rm trap},R}) \subset {\rm int}(\Omega_{\rm trap})$.
Hence $\Omega_{\rm trap}$ is a trapping region for $f$.
\end{proof}

%...............................................................................
\begin{proof}[Proof of Proposition \ref{pr:goFromOmegaGen}]
We first show $\Omega \subset \bigcup_{i=-\ell}^0 f^i(\Omega_{\rm rec})$.
Choose any $x \in \Omega$.
If $x_1 > 0$ then $x \in \Omega_{\rm rec}$.
If $x_1 \le 0$ then $x \in D_i$, for some $i \in \{ 1,\ldots,\ell \}$.
The upper bound $i = \ell$ is a consequence of \eqref{eq:Yass}--\eqref{eq:Zass2}
because $V$ lies above $f_L^{-\ell}(\Sigma)$ and $f^{-1}(U)$ lies on or above $f_L^{-\ell}(\Sigma)$.
Thus by Proposition \ref{pr:partition}, $f^i(x)_1 > 0$, and so $f^i(x) \in \Omega_{\rm rec}$ because $\Omega$ is forward invariant.

Now choose any $y \in \Omega_{\rm rec}$.
If $f(y) \in \Omega_{\rm rec}$, let $n = 1$ and observe that the first symbol of any $\cS \in \Gamma(y)$ is $R$, which belongs to $\bW$.
So now suppose $f(y) \notin \Omega_{\rm rec}$.
Also suppose $f(Y)_1 < 0$, so then $f(y)$ belongs to the quadrilateral $Y Z f(Y) O$
(if instead $f(Y)_1 \ge 0$ then the following arguments can be applied to the part of $Y Z f(Y) O$ that belongs to $\Pi_L$).
We have $\chi_L(f(y)) \le \max \left[ \chi_L(Y), \chi_L(Z), \chi_L(f(Y)), \chi_L(O) \right]$
(this follows from the linear ordering of the regions $D_p$).
But $\chi_L(f(Y)) = \chi_L(Y) - 1$ and $\chi_L(O) = 1$, thus $\chi_L(f(y)) \le p_{\rm max}$.

Let $n = \chi_L(f(y)) + 1$.
Then $f^n(y)_1 > 0$ and so $f^n(y) \in \Omega_{\rm rec}$ because $\Omega$ is forward invariant.
Also $f^j(y)_1 \le 0$ for all $j = 1,\ldots,n-1$ with $f^j(y)_1 = 0$ only possible for $j = 1$ and $j = n-1$.
In summary, $y_1 > 0$, $f(y)_1 \le 0$, $f^j(y)_1 < 0$ for all $j = 2,\ldots,n-2$, $f^{n-1}(y)_1 \le 0$, and $f^n(y) \in \Omega_{\rm rec}$.
Thus there are four possibilities for the first $n$ symbols of $\cS \in \Gamma(y)$:
$R L^{n-1}$, $R R L^{n-2}$, $R L^{n-2} R$, and $R R L^{n-3} R$
(the last possibility can only arise if $f(y)_1 = 0$ and $f^{n-1}(y)_1 = 0$).
All four words can be expressed as a concatenation of words in $\bW$ (because $n - 1 \le p_{\rm max}$).
Thus $\Omega_{\rm rec}$ is $\bW$-recurrent.
\end{proof}

%===============================================================================
\section{An algorithm for detecting a chaotic attractor}
\label{sec:algorithm}
\setcounter{equation}{0}

In the previous two sections we obtained sufficient conditions
for the assumptions of Theorem \ref{th:main} to hold with a trapping region for the 2d BCNF \eqref{eq:bcnf}.
In \S\ref{sub:algorithm} we summarise these conditions and state Algorithm \ref{al:theAlgorithm} (in pseudo-code)
for testing their validity.
In \S\ref{sub:comments} we further discuss the application of the algorithm to the
slice of parameter space shown in Fig.~\ref{fig:RLpNumerics_0}.

%-------------------------------------------------------------------------------
\subsection{Statement and proof of the algorithm}
\label{sub:algorithm}

The polygon $\Omega$ constructed in \S\ref{sub:Omega}
typically satisfies \eqref{eq:Yass}--\eqref{eq:Zass2}
for some interval of $\beta$-values (where $X = (0,\beta)$).
Within this interval, smaller values of $\beta$ tend to correspond to smaller values of $\chi_L(Y)$ and $\chi_L(Z)$
and so produce a smaller value for $p_{\rm max}$, \eqref{eq:pMax2}.
Smaller values of $p_{\rm max}$ are more favourable for the cone $C_J$ \eqref{eq:CJ} to be well-defined, invariant, and expanding.
This is because with a smaller value of $p_{\rm max}$ there are less matrices in $\bM$
and therefore fewer inequalities that need to be satisfied.

For these reasons we search for a suitable value of $\beta$
by iteratively increasing its value in steps of size $\beta_{\rm step}$ from $\beta_{\rm min}$ up to (at most) $\beta_{\rm max}$.
To produce Fig.~\ref{fig:RLpNumerics_0} we used
\begin{align}
\beta_{\rm step} &= 0.01, &
\beta_{\rm min} &= 0.01, &
\beta_{\rm max} &= 5.
\end{align}

For a given value of $\beta$ there are five groups of conditions that need to be checked.
These are labelled (C1)--(C5) in Algorithm \ref{al:theAlgorithm} below.
First we require $\Omega$ to be well-defined.
This is established by showing that $r$ and $\ell$ of Definition \ref{df:rl} exist.
To produce Fig.~\ref{fig:RLpNumerics_0} this was implemented by
iterating $X$ backwards and forwards up to maximum allowed values
\begin{align}
r_{\rm max} &= 15, &
\ell_{\rm max} &= 15.
\end{align}
Second we check conditions \eqref{eq:Yass}--\eqref{eq:Zass2}.
If these are satisfied then $\Omega$ is forward invariant
and in Algorithm \ref{al:theAlgorithm} this fixes the value of $\beta$.
We then evaluate $p_{\rm max}$ by iterating $Y$ and $Z$ under $f$, \eqref{eq:pMax2}.
The remaining three conditions are that the cone $C_J$ is well-defined,
that $C_J$ is invariant,
and that $C_J$ is expanding.
The computations involved in the last two steps are elementary because $G_j(m)$ and $H_j(m)$ are
polynomials of degree two or less.
Algorithm \ref{al:theAlgorithm} registers its success or failure
by the termination value of the Boolean variable $\chi_{\tt chaos}$.

\newpage  %%%%% <---------- !!!!!!!!!!!!!!!!!!!!!
%...............................................................................
\begin{algorithm}~\\
{\tt
set $\chi_{\text{chaos}} = {\color{codeRed} \text{false}}$\\
set $\beta = \beta_{\rm min} > 0$\\
{\color{codeGreen} While} $\chi_{\text{chaos}} = {\color{codeRed} \text{false}}$ and $\beta \le \beta_{\rm max}$\\
\makebox[0pt][l]{\circled{\rm C1}}\codeIndent {\color{codeGreen} If} $r$ or $\ell$ do not exist\\
\codeIndent \codeIndent set $\beta = \beta + \beta_{\rm step}$\\
\codeIndent {\color{codeGreen} else}\\
\makebox[0pt][l]{\circled{\rm C2}}\codeIndent \codeIndent {\color{codeGreen} If} any of \eqref{eq:Yass}{\rm --}\eqref{eq:Zass2} are false\\
\codeIndent \codeIndent \codeIndent set $\beta = \beta + \beta_{\rm step}$\\
\codeIndent \codeIndent {\color{codeGreen} else}\\
\codeIndent \codeIndent \codeIndent set $\chi_{\text{chaos}} = {\color{codeRed} \text{true}}$\\
\codeIndent \codeIndent {\color{codeGreen} end}\\
\codeIndent {\color{codeGreen} end}\\
{\color{codeGreen} end}\\
{\color{codeGreen} If} $\chi_{\text{chaos}} = {\color{codeRed} \text{true}}$\\
\codeIndent Evaluate $p_{\rm max}$ \eqref{eq:pMax2}.\\
\makebox[0pt][l]{\circled{\rm C3}}\codeIndent {\color{codeGreen} If} \eqref{eq:Mjassumptions} is false for some $M^{(j)} = A_L^{j-1} A_R$ with $j \in \{ 1,\ldots,p_{\rm max}+1 \}$\\
\codeIndent \codeIndent set $\chi_{\text{chaos}} = {\color{codeRed} \text{false}}$\\
\codeIndent {\color{codeGreen} else}\\
\codeIndent \codeIndent Evaluate $m^{(j)}_{\rm stab}$ and $m^{(j)}_{\rm unstab}$ for each $j$.\\
\codeIndent \codeIndent Evaluate $m_{\rm stab,min}$ and $m_{\rm stab,max}$.\\
\makebox[0pt][l]{\circled{\rm C4}}\codeIndent \codeIndent {\color{codeGreen} If}
$m_{\rm stab,min} \le m^{(j)}_{\rm unstab} \le m_{\rm stab,max}$ for some $j \in \{ 1,\ldots,p_{\rm max}+1 \}$\\
\codeIndent \codeIndent \codeIndent set $\chi_{\text{chaos}} = {\color{codeRed} \text{false}}$\\
\codeIndent \codeIndent {\color{codeGreen} else}\\
\makebox[0pt][l]{\circled{\rm C5}}\codeIndent \codeIndent \codeIndent {\color{codeGreen} If}, for some $j \in \{ 1,\ldots,p_{\rm max}+1 \}$, $H_j$ does not have two distinct\\
\codeIndent \codeIndent \codeIndent \codeIndent \codeIndent real roots or two of \eqref{eq:inequality1}{\rm --}\eqref{eq:inequality3} are false (or, if\\
\codeIndent \codeIndent \codeIndent \codeIndent \codeIndent $b_j^2 + d_j^2 = 1$, the condition in Remark \ref{re:oneRoot} is false)\\
\codeIndent \codeIndent \codeIndent \codeIndent set $\chi_{\text{chaos}} = {\color{codeRed} \text{false}}$\\
\codeIndent \codeIndent \codeIndent {\color{codeGreen} end}\\
\codeIndent \codeIndent {\color{codeGreen} end}\\
\codeIndent {\color{codeGreen} end}\\
{\color{codeGreen} end}
}
\label{al:theAlgorithm}
\end{algorithm}

The theorem below assumes calculations are done exactly.
For Fig.~\ref{fig:RLpNumerics_0} calculations were performed with rounding at $16$ digits.

%...............................................................................
\begin{theorem}
Let $f$ be a map of the form \eqref{eq:bcnf} with $\delta_L, \delta_R > 0$.
If Algorithm \ref{al:theAlgorithm} outputs $\chi_{\tt chaos} = {\tt true}$
then $f$ has an attractor with a positive Lyapunov exponent.
\label{th:algorithm}
\end{theorem}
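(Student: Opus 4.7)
The plan is to show that success of Algorithm \ref{al:theAlgorithm} forces every hypothesis of Theorem \ref{th:main} to hold with $\bW = \left\{ R L^p \,\big|\, 0 \le p \le p_{\rm max} \right\}$, and then read off the conclusion. I would work through the five checkpoints (C1)--(C5) in order, in each case matching the algorithm's test against one of the propositions proved in Sections \ref{sec:iec} and \ref{sec:fir}.

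First, if the algorithm terminates with $\chi_{\tt chaos} = {\tt true}$, then (C1) and (C2) must have succeeded for some $\beta$, so \eqref{eq:rlExist} is satisfied and Definition \ref{df:Omega} produces a well-defined polygon $\Omega$, and conditions \eqref{eq:Yass}--\eqref{eq:Zass2} all hold. Proposition \ref{pr:forwardInvariant} then yields, for any sufficiently small $\ee > 0$, a trapping region $\Omega_{\rm trap} \subset \Omega$ for $f$.

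Next I would fix $\bW$ as above with $p_{\rm max}$ given by \eqref{eq:pMax2}, so that the associated matrix set \eqref{eq:bM} is $\bM = \left\{ A_L^{j-1} A_R \,\big|\, j = 1, \ldots, p_{\rm max}+1 \right\}$. Checkpoint (C3) is exactly hypothesis \eqref{eq:Mjassumptions} of Lemma \ref{le:Gfps} applied to each $M^{(j)}$, so each pair $m^{(j)}_{\rm stab}, m^{(j)}_{\rm unstab}$ is well-defined; checkpoint (C4) is the invariance hypothesis \eqref{eq:CIassumption1} of Proposition \ref{pr:CJ}; and checkpoint (C5), via Proposition \ref{pr:threeInequalities} together with Remark \ref{re:oneRoot} for the degenerate case $b_j^2 + d_j^2 = 1$, is a computable restatement of the expansion hypothesis \eqref{eq:CIassumption2}. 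Hence Proposition \ref{pr:CJ} delivers an invariant expanding cone $C_J$ for $\bM$.

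Finally, Proposition \ref{pr:goFromOmegaGen} asserts that the set $\Omega_{\rm rec}$ from \eqref{eq:OmegaGen} is $\bW$-recurrent and that $\Omega_{\rm trap} \subset \Omega \subset \bigcup_{i=-\ell}^{0} f^i(\Omega_{\rm rec})$. Since $\delta_L, \delta_R > 0$ and $\det(A_L) = \delta_L$, $\det(A_R) = \delta_R$, Lemma \ref{le:fInverse} shows $f$ is invertible. Every hypothesis of Theorem \ref{th:main} is now in place, so the theorem supplies an attractor $\Lambda \subset \Omega_{\rm trap}$ on which \eqref{eq:liminf} holds with $\lambda_{\rm bound} > 0$, and hence a positive Lyapunov exponent. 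There is no real analytic obstacle — the argument is essentially a dictionary matching each algorithmic step to the hypothesis it discharges — though mild care is needed around the degenerate sub-case of (C5), which is precisely the reason Remark \ref{re:oneRoot} is invoked there.
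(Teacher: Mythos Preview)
Your proposal is correct and follows essentially the same route as the paper: matching (C1)--(C2) to Proposition~\ref{pr:forwardInvariant} for the trapping region, (C3)--(C5) to Propositions~\ref{pr:CJ} and~\ref{pr:threeInequalities} (with Remark~\ref{re:oneRoot}) for the invariant expanding cone, invoking Proposition~\ref{pr:goFromOmegaGen} for $\bW$-recurrence, and then applying Theorem~\ref{th:main}. You even make explicit the invertibility check via Lemma~\ref{le:fInverse}, which the paper uses implicitly.
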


%...............................................................................
\begin{proof}
Suppose Algorithm \ref{al:theAlgorithm} outputs $\chi_{\tt chaos} = {\tt true}$.
Then (C1)--(C5) all hold for some fixed $\beta > 0$.
Since (C1) holds, $\Omega$ is well-defined by Lemma \ref{le:simplePolygon}.
Since (C2) holds, $f$ has a trapping region $\Omega_{\rm trap} \subset \Omega$ by Proposition \ref{pr:forwardInvariant}.
Thus $f$ has an attractor $\Lambda \subset \Omega_{\rm trap}$.
Let $\bW$ be given by \eqref{eq:bW}
and $\Omega_{\rm rec}$ be given by \eqref{eq:OmegaGen}.
Then, by Proposition \ref{pr:goFromOmegaGen},
$\Lambda \subset \bigcup_{i=-\infty}^\infty f^i(\Omega_{\rm rec})$
and $\bW$ generates $\Gamma(y)$ for all $y \in \Omega_{\rm rec}$.

Since (C3) holds, the cone $C_J$ is well-defined.
Since (C4) holds, $C_J$ is forward invariant under
$\bM = \left\{ \pM(\cW) \,\middle|\, \cW \in \bW \right\}$ by Proposition \ref{pr:CJ}.
Since (C5) holds, $C_J$ is also expanding under $\bM$ by Propositions \ref{pr:CJ} and \ref{pr:threeInequalities}.
Then by Theorem \ref{th:main}
for all $x \in \Lambda$ there exists $v \in T \mathbb{R}^2$ such that $\lambda(x,v) > 0$.
\end{proof}

%-------------------------------------------------------------------------------
\subsection{Comments on the results of Algorithm \ref{al:theAlgorithm}}
\label{sub:comments}

As mentioned in \S\ref{sec:bcnf}, for \eqref{eq:bcnf} with $\delta_L = \delta_R = 0.3$,
Algorithm \ref{al:theAlgorithm} outputs $\chi_{\tt chaos} = {\tt true}$
throughout the red regions of Fig.~\ref{fig:RLpNumerics_0}.
Here we examine three sample parameter combinations in detail.
For each of the three black dots in Fig.~\ref{fig:RLpNumerics_0},
the polygon $\Omega$ is well-defined and forward invariant.
Figs.~\ref{fig:RLpNumerics_ab}a--\ref{fig:RLpNumerics_gh}a
show $\Omega$ using the value of $\beta > 0$ generated by Algorithm \ref{al:theAlgorithm}.

%%%%%%%%%%%%%%%%%%%%%%%%%%%%%%%%%%%%%%%%%%%%%%%%%%%%%%%%%%%%%
\begin{figure}[b!]
\begin{center}
\includegraphics[height=6cm]{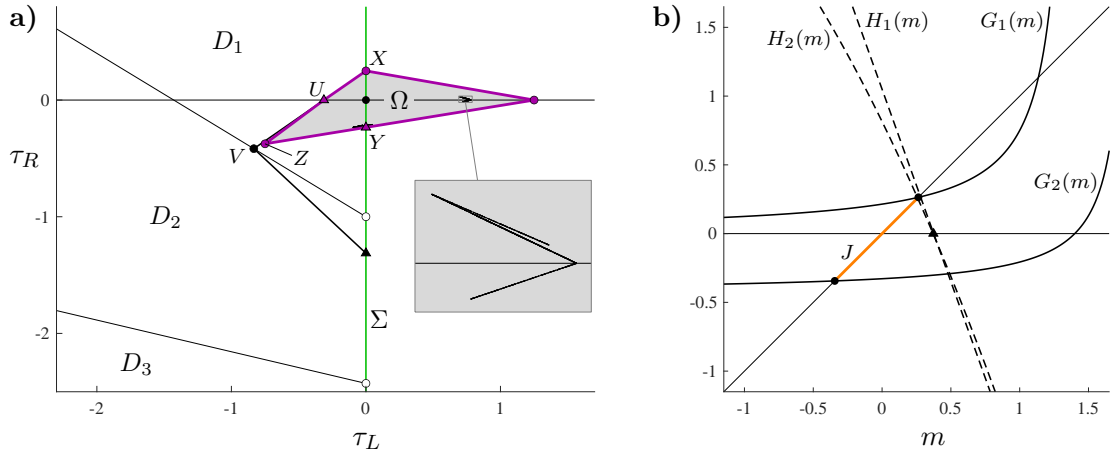}
\caption{
Constructive elements produced by Algorithm \ref{al:theAlgorithm}
for \eqref{eq:bcnf} with \eqref{eq:dLdR} and $(\tau_L,\tau_R) = (0.7,-1.4)$.
Here the algorithm obtains $\beta = 0.25$ and returns $\chi_{\tt chaos} = {\tt true}$. 
Panel (a) shows the forward invariant region $\Omega$ (see Fig.~\ref{fig:RLpSchem_c}),
the regions $D_p$ (see Fig.~\ref{fig:RLpSchem_b}),
and a numerically computed attractor.
Panel (b) shows the slope maps $G_j$ \eqref{eq:Gj} and the functions $H_j$ \eqref{eq:Hj} for $j = 1,2$.
\label{fig:RLpNumerics_ab}
}
\end{center}
\end{figure}
%%%%%%%%%%%%%%%%%%%%%%%%%%%%%%%%%%%%%%%%%%%%%%%%%%%%%%%%%%%%%

In Fig.~\ref{fig:RLpNumerics_ab}a we have $Y, Z \in D_1$, so $p_{\rm max} = 1$ and $\bW = \{ R, RL \}$.
Fig.~\ref{fig:RLpNumerics_ab}b shows how (C5) is satisfied.
With $j = 1$ we have $\cW = R$,
so $H_1(m)$ is linear, see Remark \ref{re:oneRoot}, and $m_{\rm root} > m_{\rm stab,max}$.
With $j = 2$ we have $\cW = RL$
with which $H_2(m)$ is quadratic and \eqref{eq:inequality1} and \eqref{eq:inequality3} are satisfied.
Numerical simulations suggest that at these parameter values $f$ has a unique two-piece chaotic attractor
with one piece intersecting $f(\Sigma)$ (as shown in the magnification of Fig.~\ref{fig:RLpNumerics_ab}a),
and its image intersecting $\Sigma$.

%%%%%%%%%%%%%%%%%%%%%%%%%%%%%%%%%%%%%%%%%%%%%%%%%%%%%%%%%%%%%
\begin{figure}[b!]
\begin{center}
\includegraphics[height=6cm]{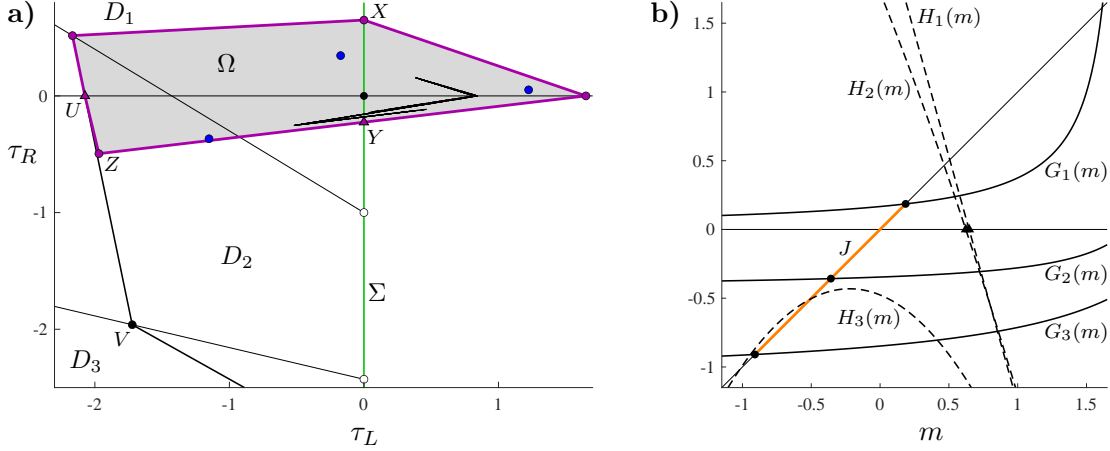}
\caption{
Constructive elements produced by Algorithm \ref{al:theAlgorithm}
for \eqref{eq:bcnf} with \eqref{eq:dLdR} and $(\tau_L,\tau_R) = (0.7,-1.8)$.
Here the algorithm obtains $\beta = 0.65$ and returns $\chi_{\tt chaos} = {\tt false}$. 
Panel (a) shows the forward invariant region $\Omega$, the regions $D_p$,
a numerically computed attractor, and a stable period-$3$ solution (blue circles).
Panel (b) shows $G_j$ and $H_j$ for $j = 1,2,3$.
\label{fig:RLpNumerics_cd}
}
\end{center}
\end{figure}
%%%%%%%%%%%%%%%%%%%%%%%%%%%%%%%%%%%%%%%%%%%%%%%%%%%%%%%%%%%%%

In Fig.~\ref{fig:RLpNumerics_cd}a we have $Y \in D_1$ and $Z \in D_2$, so $p_{\rm max} = 2$ and $\bW = \{ R, RL, RL^2 \}$.
Here Algorithm \ref{al:theAlgorithm} returns $\chi_{\tt chaos} = {\tt false}$ because (C5) is not satisfied.
This is because $H_3(m)$ has no real roots, and this is evident in Fig.~\ref{fig:RLpNumerics_cd}b.
Indeed at these parameter values $f$ has a stable period-$3$ solution corresponding to the word $RL^2$.
Nevertheless, $f$ does appear to have a chaotic attractor contained in $D_1 \cup \Pi_R$.
It may be possible to prove this attractor has a positive Lyapunov exponent
by constructing a trapping region in $D_1 \cup \Pi_R$.
For the parameter values of Fig.~\ref{fig:RLpNumerics_gh} we again have $p_{\rm max} = 2$ but
now $H_3(m)$ has real roots satisfying \eqref{eq:inequality1} and \eqref{eq:inequality3} and
Algorithm \ref{al:theAlgorithm} terminates with $\chi_{\tt chaos} = {\tt true}$.

%%%%%%%%%%%%%%%%%%%%%%%%%%%%%%%%%%%%%%%%%%%%%%%%%%%%%%%%%%%%%
\begin{figure}[b!]
\begin{center}
\includegraphics[height=6cm]{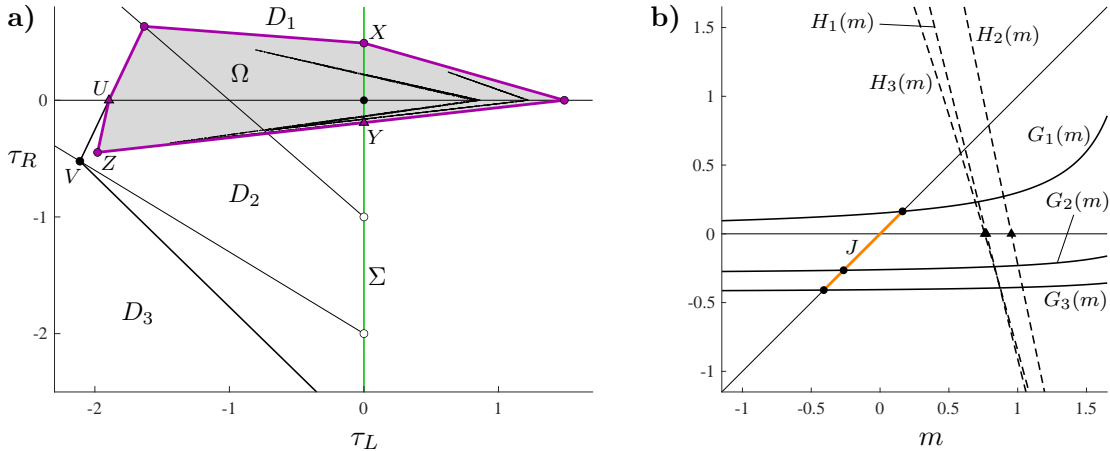}
\caption{
Constructive elements produced by Algorithm \ref{al:theAlgorithm}
for \eqref{eq:bcnf} with \eqref{eq:dLdR} and $(\tau_L,\tau_R) = (1,-2)$.
Here the algorithm obtains $\beta = 0.49$ and returns $\chi_{\tt chaos} = {\tt true}$. 
Panel (a) shows the forward invariant region $\Omega$, the regions $D_p$,
and a numerically computed attractor (shown also in Fig.~\ref{fig:RLpNumerics_ij}b).
Panel (b) shows $G_j$ and $H_j$ for $j = 1,2,3$.
\label{fig:RLpNumerics_gh}
}
\end{center}
\end{figure}
%%%%%%%%%%%%%%%%%%%%%%%%%%%%%%%%%%%%%%%%%%%%%%%%%%%%%%%%%%%%%

We now discuss the region boundaries in Fig.~\ref{fig:RLpNumerics_0} labelled $B_1$ to $B_5$.
Boundary $B_1$ is the horizontal line $\tau_R = -\delta_R - 1$.
Below $B_1$, and for $\tau_L > 0.7$ approximately, Algorithm \ref{al:theAlgorithm} terminates with $\chi_{\tt chaos} = {\tt true}$.
On $B_1$ (C5) is not satisfied because $A_R$ has an eigenvalue of $-1$
so $H_1(m) = 0$ at $m = m^{(1)}_{\rm stab}$.
Indeed above $B_1$ the map $f$ has an asymptotically stable fixed point in $x_1 > 0$.
In this way Algorithm \ref{al:theAlgorithm} detects a true bifurcation boundary between chaotic and non-chaotic dynamics.

On $B_2$ (C5) is not satisfied because $H_2(m) = 0$ at $m = m^{(1)}_{\rm stab}$.
Thus to the left of $B_2$, Algorithm \ref{al:theAlgorithm} terminates with $\chi_{\tt chaos} = {\tt false}$
because some $v \in C_J$ do not expand when multiplied by $M = A_L A_R$.
Nevertheless numerical results suggest $f$ has a chaotic attractor here.
It may be possible to prove this by using a different word set $\bW$.

Boundary $B_3$ is the upper boundary of the blue region in which
there exists a stable period-$3$ solution of period $n=3$ (corresponding to the word $RL^2$, see Fig.~\ref{fig:RLpNumerics_cd}a).
On this boundary the periodic solution is destroyed in a border-collision bifurcation
by having one of its points collide with $\Sigma$.
Algorithm \ref{al:theAlgorithm} does not detect this boundary exactly
as evident in Fig.~\ref{fig:RLpNumerics_0} by the presence of white pixels immediately above $B_3$.
At these pixels Algorithm \ref{al:theAlgorithm} obtains $p_{\rm max} = 2$ with which
(C5) is not satisfied because $H_3(m)$ has no real roots.
In nearby red pixels Algorithm \ref{al:theAlgorithm} obtains $p_{\rm max} = 1$ with which
the behaviour of $H_3(m)$ is irrelevant.
The number of white pixels appears to tend to zero in the limit $\beta_{\rm step} \to 0$ because
the size of the interval of $\beta$-values for which $\Omega$ is forward invariant with $p_{\rm max} = 1$
vanishes as we approach $B_3$ from above.
In a similar way as we approach the homoclinic bifurcation HC from above
the size of the interval of $\beta$-values for which (C1) and (C2) are satisfied approaches zero
(in \cite{GlSi19} a different approach was used to construct a trapping region).

On $B_4$ the period-three solution loses stability by attaining an eigenvalue of $-1$.
For $\tau_R < -2.6$, approximately, Algorithm \ref{al:theAlgorithm} detects this boundary exactly.
On $B_4$ (C5) is not satisfied because $H_3(m) = 0$ at $m = m^{(3)}_{\rm stab}$.
That is, $\| M v \| = \| v \|$ for the eigenvector
$v = \big( 1, m^{(3)}_{\rm stab} \big)$ of $M = A_L^2 A_R$ corresponding to the eigenvalue $-1$.
Finally, boundary $B_5$ is analogous to boundary $B_2$.
On $B_5$ we have $H_3(m) = 0$ at $m = m^{(1)}_{\rm stab}$.

%===============================================================================
\section{Discussion}
\label{sec:conc}
\setcounter{equation}{0}

We have presented a general method by which
one can prove, possibly with computer assistance, that a piecewise-linear map has a chaotic attractor.
We applied the method to the 2d BCNF and found a chaotic attractor throughout a parameter regime
that, unlike the logistic family for example, does not contain periodic windows.
Such {\em robust chaos} is typical for piecewise-linear maps
and for this reason piecewise-linear maps are desirable in applications that use chaos such as chaos-based cryptography \cite{KoLi11}.

In our implementation we considered only one approach for the construction of $\Omega_{\rm trap}$
and only word sets of the form \eqref{eq:bW}.
There is considerable room to generalise these, 
such as by defining $\Omega_{\rm trap}$ be to the union of a polygon and its images under $f$ \cite{Si20b}.

A major next step would be the application of this method to families of higher-dimensional maps,
such the $N$-dimensional border-collision normal form.
Results of this nature have already been achieved in \cite{Gl15b,GlJe15}.
To construct a trapping region and a cone it may be helpful to work with convex polytopes \cite{AtLa14}.
It would also be useful to obtain a converse to Theorem \ref{th:main}:
if $f$ has a topological attractor with a positive Lyapunov exponent,
must some $\Omega_{\rm trap}$, $\bW$, and $C$ (satisfying the required properties) exist?

%===============================================================================
\section*{Acknowledgements}

This work was supported by Marsden Fund contract MAU1809, managed by Royal Society Te Ap\={a}rangi.

\appendix
%===============================================================================
\section{The significance of the 2d BCNF}
\label{app:coordChange}

Let $f$ be a continuous map on $\mathbb{R}^2$ that is affine on each side of $\Sigma = \left\{ x \,\big|\, x_1 = 0 \right\}$.
Then $f$ has the form
\begin{equation}
f(x) = \begin{cases}
\begin{bmatrix} a_L & b \\ c_L & d \end{bmatrix}
\begin{bmatrix} x_1 \\ x_2 \end{bmatrix} +
\begin{bmatrix} p \\ q \end{bmatrix}, & x_1 \le 0, \\
\begin{bmatrix} a_R & b \\ c_R & d \end{bmatrix}
\begin{bmatrix} x_1 \\ x_2 \end{bmatrix} +
\begin{bmatrix} p \\ q \end{bmatrix}, & x_1 \ge 0,
\end{cases}
\label{eq:f2dgen}
\end{equation}
for some $a_L, a_R, b, c_L, c_R, d, p, q \in \mathbb{R}$.
It is a simple exercise to show that $f(\Sigma)$ intersects $\Sigma$ at a unique point if and only if $b \ne 0$.
Moreover, if $b \ne 0$ then this point is not a fixed point of \eqref{eq:f2dgen} if and only if $\xi = (1-d) p + b q \ne 0$.

Now suppose $b \ne 0$ and $\xi \ne 0$.
Then the coordinate change
\begin{equation}
\tilde{x} = \frac{1}{\xi} \left( \begin{bmatrix} 1 & 0 \\ -d & b \end{bmatrix} x + \begin{bmatrix} 0 \\ d p - b q \end{bmatrix} \right),
\label{eq:f2dcoordChange}
\end{equation}
is well-defined and invertible.
Also notice it leaves $\Sigma$ unchanged.
By directly applying \eqref{eq:f2dcoordChange} to \eqref{eq:f2dgen} we find that if $\xi > 0$ then $f$
is transformed to \eqref{eq:bcnf} with $\tilde{x}$ in place of $x$ and
$\tau_L = a_L + d$, $\delta_L = a_L d - b c_L$, $\tau_R = a_R + d$, and $\delta_R = a_R d - b c_R$. 
If instead $\xi < 0$ then
$\tau_L = a_R + d$, $\delta_L = a_R d - b c_R$, $\tau_R = a_L + d$, and $\delta_R = a_L d - b c_L$.

{\footnotesize
%\bibliographystyle{plain}
%\bibliography{../../DynSyst,../../MathBio,../../Misc,../../OtherTheory,../../PWS,../../Stoch}

}

\end{document}